\newcommand{\bbeta}{\bm{\beta}}
\newcommand{\bgamma}{\bm{\gamma}} 
\newcommand{\dkl}{\mathrm{D_{KL} }}
\newcommand{\R}{\mathbb{R}}
\newcommand{\E}{\mathbb{E}}
\renewcommand{\P}{\mathbb{P}}
\newcommand{\Po}{\mathbb{P}_\mu}
\newcommand{\Q}{\mathcal{Q}}
\newcommand{\M}{\mathcal{M}}
\newcommand{\ML}{\mathrm{ML}}
\newcommand{\MF}{\mathrm{MF}}
\newcommand{\ELBO}{\mathrm{ELBO}}
\newcommand{\er}{\mathrm{Err}_{p,K}}
\theoremstyle{plain}\newtheorem{lemma}{\textbf{Lemma}}\newtheorem{theorem}{\textbf{Theorem}}\newtheorem{corollary}{\textbf{Corollary}}\newtheorem{assumption}{\textbf{Assumption}}\newtheorem{example}{\textbf{Example}}\newtheorem{definition}{\textbf{Definition}}
\theoremstyle{definition}
\theoremstyle{definition}\newtheorem{remark}{\textbf{Remark}}
\renewenvironment{proof}[1][\proofname] {
	\par\pushQED{\qed}\normalfont
	\topsep6\p@\@plus6\p@\relax
	\trivlist\item[\hskip\labelsep\bfseries#1\@addpunct{:}]
 	\ignorespaces
} {
	\popQED\endtrivlist\@endpefalse
}
\begin{document}
\begin{frontmatter}

\title{A Mean Field Approach to Empirical Bayes Estimation in High-dimensional Linear Regression}
\runtitle{Empirical Bayes High-dimensional Linear Regression}

\begin{aug}
\author[A]{\fnms{Sumit}~\snm{Mukherjee\thanks{Supported by NSF DMS-2113414.}}\ead[label=e1]{sm3949@columbia.edu}}\and
\author[B]{\fnms{Bodhisattva}~\snm{Sen\thanks{Supported by NSF grants DMS-2311062 and DMS-2515520.}}\ead[label=e2]{b.sen@columbia.edu}} \and
\author[C]{\fnms{Subhabrata}~\snm{Sen\thanks{Supported by NSF DMS-CAREER 2239234, ONR N00014-23-1-2489 and AFOSR FA9950-23-1-0429.}}\ead[label=e3]
{subhabratasen@fas.harvard.edu}}
\address[A]{Department of Statistics, Columbia University\printead[presep={,\ }]{e1}}
\address[B]{Department of Statistics, Columbia University\printead[presep={,\ }]{e2}}
\address[C]{Department of Statistics, Harvard University\printead[presep={,\ }]{e3}}

\end{aug}



\begin{abstract} We study empirical Bayes estimation in high-dimensional linear regression. To facilitate  computationally efficient estimation of the underlying prior, we adopt a variational empirical Bayes approach, introduced originally in~\citet{carbonetto2012scalable} and~\citet{kim2022flexible}. We establish asymptotic consistency of the nonparametric maximum likelihood estimator (NPMLE) and its (computable) naive mean field variational surrogate under mild assumptions on the design and the prior. Assuming, in addition, that the naive mean field approximation has a dominant optimizer, we develop a computationally tractable approximation to the oracle posterior distribution, and establish its accuracy under the 1-Wasserstein metric. This enables computationally feasible Bayesian inference; e.g.,~construction of posterior credible intervals with an average coverage guarantee, Bayes optimal estimation for the regression coefficients, estimation of the proportion of non-nulls, etc. Our analysis covers both deterministic and random designs, and accommodates correlations among the features. To the best of our knowledge, this provides the first rigorous nonparametric empirical Bayes method in a high-dimensional regression setting without sparsity. 
\end{abstract}

\begin{keyword}[class=MSC]
\kwd[Primary ]{62C12}
\kwd{62G20}
\kwd[; secondary ]{62J05}
\end{keyword}

\begin{keyword}
\kwd{Consistency of nonparametric maximum likelihood, evidence lower bound, posterior inference, variational approximation}
\end{keyword}

\end{frontmatter}
	
	
	
	
	
	
	
	
	
	


\section{Introduction}\label{sec:Intro}
We consider the following high-dimensional linear regression model: 
\begin{align}\label{eq:lin_reg}
{\mathbf y}={\mathbf X}{\bm \beta}+{\bm \varepsilon}
\end{align} 
where ${\mathbf y}\in \R^n$ is the response vector, ${\mathbf X}\in \R^{n\times p}$ is the design matrix, ${\bm \beta} = (\beta_1,\ldots, \beta_p) \in \R^p$ is the vector of unknown regression coefficients, and ${\bm \varepsilon} \in \R^n$ is the (unobserved) error vector distributed as $N_n({\bf 0},\sigma^2{\bf I})$ with $\sigma^2 >0$ assumed known. In the Bayesian linear regression model, one further assumes a prior on ${\bm \beta}$; a simple starting assumption in this regard is that \begin{equation}\label{eq:iid-Beta}
\beta_1,\ldots,\beta_p\stackrel{iid}{\sim}\mu,
\end{equation}
for some prior distribution $\mu$ on $\R$. In the high-dimensional regime, which we study in this paper, we assume that the number of predictors $p$ changes with the sample size $n$, and both can be large. Throughout our analysis we work with a sequence of fixed design matrices; for notational convenience, we suppress the dependence of $\mathbf{X}$ on $n$ and $p$ throughout.

Although the traditional Bayesian approach to inference in high-dimensional linear regression is a powerful tool for tackling many scientific problems (see e.g.,~\citet{ george1997approaches},~\citet{MR2797133},~\citet{Guan-Stephens-2011}) for applications in genomics, finance, marketing, etc.) it suffers from three main drawbacks. Inference in such Bayesian models is usually implemented using Markov chain Monte Carlo (MCMC) methods (see e.g.,~\citet{metropolis1953equation},~\citet{Hastings-1970},~\citet{ george1993variable},~\citet{ Ishwaran-Rao-2005}) which are not typically scalable and impose a high computational cost in studies with a large number of variables. Further, in many settings, the assumption of a known prior distribution $\mu$ cannot be fully justified and a more flexible approach that can handle both ``dense'' (where many predictors have non-zero effects) and ``sparse'' (where most of the predictors have no effect) regimes with a variety of signal distributions, is called-for. For example, signals are expected to be sparse in signal processing \citep{candes2014mathematics}
while dense signals are expected in polygenic risk score prediction \citep{lewis2020polygenic}, GWAS data \citep{yang2010common} and econometrics \citep{angrist2004control}. Finally, prior specification in Bayesian analysis typically involves the specification of some hyperparameters, which can be challenging in applications.

In the recent papers~\citet{carbonetto2012scalable} and~\citet{kim2022flexible} the authors propose and study a ``variational empirical Bayes'' approach --- it combines {\it empirical Bayes} (EB) inference for problem~\eqref{eq:lin_reg} via a {\it variational approximation} (VA) for the class of {\it scale mixture of normal priors} (on the regression coefficients). Variational Bayesian inference (see e.g.,~\citet{jordan1999introduction},~\citet{wainwright2008graphical},~\citet{Blei-Et-Al-VI}) is an appealing approach to approximating complex posterior distributions offering computational advantages. The basic idea is to recast the problem of computing posterior probabilities --- which is inherently a high-dimensional integration problem --- as an optimization problem by introducing a class of approximating distributions, and then optimizing the Kullback-Leibler (KL) divergence to find the distribution within this class that best matches the posterior. As in usual EB inference (see e.g.,~\citet{kiefer1956consistency},~\citet{Robbins-56},~\citet{efron1973stein}), the prior $\mu$ is assumed to be unknown, but belonging to a class of distributions, which is then estimated from the data. Although the literature on EB methods is very long, relatively few papers have addressed the problem of EB inference in the context of multiple linear regression and have either focused on relatively inflexible priors, or have been met with
considerable computational challenges (see e.g.,~\citet{Nebebe-Stroud-1986},~\citet{George-Foster-2000},~\citet{Yuan-Lin-2005}).

The approach advocated in~\citet{carbonetto2012scalable} and~\citet{kim2022flexible} has several advantages over other high-dimensional linear regression techniques, namely: (i) it is fast (as opposed to modern MCMC techniques for Bayesian high-dimensional regression), (ii) it is capable of adapting to both sparse and dense settings as it estimates the prior $\mu$ from the class of all scale mixtures of normal priors; and (iii) it does not require any tuning as it does not involve the choice of (hyper)-parameters. 

In this paper we study the theoretical properties of the above variational EB procedure, in the high-dimensional regime where both $n$ and $p$ grow. In fact, as far as we are aware, ours is the first paper that provides rigorous statistical guarantees for the method, under suitable assumptions.

Let us briefly describe our setup. In contrast to~\citet{carbonetto2012scalable} and~\citet{kim2022flexible} we consider a different class of priors, namely, (\textcolor{black}{any closed subset of}) the class $\mathcal{P}$ of all bounded probability distributions on $\R$. It can be argued that our choice of priors is more flexible and allows for both ``parametric'' and ``nonparametric'' model assumptions. We estimate $\mu$ from the observed data as a first step in our EB methodology (see e.g.,~\citet{robbins1950generalization},~\citet{Robbins-56},~\citet{Jiang-Zhang-2009}). In Section~\ref{sec:NPMLE} we study the nonparametric maximum likelihood estimator (NPMLE) of $\mu$ which is obtained by maximizing the marginal likelihood of the data $\mathbf{y}$ (or an approximate maximizer thereof) over this parameter space $\mathcal{P}$, i.e.,  
\begin{equation}\label{eq:mu-NPMLE}
\hat \mu_\ML \,\in \,\arg \max_{\mu \in \mathcal{P}} \; \log m_\mu({\bf y}) 
\end{equation}
where
\begin{align}\label{eq:marginal}
m_\mu({\bf y}):=&\int\Big(\frac{1}{2\pi\sigma^2}\Big)^{\frac{n}{2}} \exp\Big(-\frac{1}{2\sigma^2}\|{\mathbf y}-{\mathbf X}{\bm \beta}\|_2^2\Big)d \mu^{\otimes p}({\bm \beta}),
\end{align}
and $\mu^{\otimes p}$ denotes the $p$-fold product distribution with each marginal being $\mu$. Here, for any $\mathbf{a} = (a_1,\ldots, a_p) \in \R^p$, $\|\mathbf{a}\|_2^2 := \sum_{i=1}^p a_i^2$.  Let $\hat \mu_\ML$ be the nonparametric maximum likelihood estimator (NPMLE) of $\mu^*$ --- the true unknown prior of the $\beta_i$'s --- obtained by maximizing~\eqref{eq:marginal}. Under minimal conditions on the design matrix $\mathbf{X}$ and the class $\mathcal{P}$, we derive a non-asymptotic convergence rate for $\hat \mu_\ML$ to $\mu^*$ under a smoothed Hellinger metric when $p$ and $n$ are both large (see Theorem~\ref{thm:ml_rate}).

However, the above integral (in~\eqref{eq:marginal}) is difficult  to compute and/or approximate via MCMC methods, especially when $p$ is large. This motivates the use of a variational approximation, as indicated above. We now introduce our key \textit{methodological contribution} (also see~\citet{carbonetto2012scalable},~\citet{kim2022flexible}): (i) we replace the marginal log-likelihood of the data $\log m_\mu(\mathbf{y})$, also known as the {\it evidence}, with the {evidence lower bound} (ELBO) obtained from a {\it naive mean field} (NMF) approximation (see e.g.,~\citet{Blei-Et-Al-VI}), and (ii) maximize this lower bound with respect to (w.r.t.) $\mu$ (belonging to a suitable class of priors). Formally, we set
\begin{equation}\label{eq:pi-hat}
\hat{\mu}_{\MF} \,\in \,\arg \max \limits_{\mu \in \mathcal{P}} \; \mathrm{NMF}(\log m_\mu(\mathbf{y});\mu),
\end{equation} 
where 
\begin{equation}\label{eq:Max-ELBO}
\mathrm{NMF}(\log m_\mu(\mathbf{y});\mu) := \sup_{\nu \in \Q} \; \ELBO(\nu;\mu)
\end{equation}
with $\Q$ being the class of all product distributions on $\R^p$ (i.e., $\nu \in \Q$ if and only if $\nu = \Pi_{i=1}^p \nu_i$, for $\nu_i$ being a distribution on $\R$, for $i=1,\ldots, p$), and 
{\small \begin{align}\label{eq:ELBO}
\ELBO(\nu;\mu)
=-\frac{n}{2}\log(2\pi\sigma^2)+\left\{-\frac{1}{2\sigma^2}\E_{\nu}\Big[\|{\bf y}-{\bf X}{\bm \beta}\|_2^2\Big]-\sum_{i=1}^p \E_{ \nu_i}\log \Big(\frac{d\nu_i}{d\mu}(\beta_i)\Big)  \right\}.
\end{align}}
We can think of $\hat{\mu}_{\MF}$ as a (naive) mean field MLE; cf.~\eqref{eq:mu-NPMLE} and~\eqref{eq:pi-hat}. We refer the reader to the discussion around~\eqref{eq:kl_expression} for the derivation of the ELBO.

The quadratic nature of the log-likelihood for the linear model dictates that for any $\mu \in \mathcal{P}$ and any maximizer $\bar{\nu} = \prod_{i=1}^{p}\bar{\nu}_i$ of~\eqref{eq:Max-ELBO}, $\bar{\nu}_i$ is a quadratic tilt of $\mu$ (see \citet[Lemma 6]{mukherjee2022variational} and~\citet[Section 5.3]{wainwright2008graphical}); Section \ref{sec:MF} has more details. Thus in this specific case, \eqref{eq:Max-ELBO} reduces to a (high-dimensional)  optimization problem over real parameters, where one maximizes over the choice of the tilts. In our subsequent discussion, we re-parametrize the $\ELBO$ objective in terms of the tilt parameters, and denote this objective function as $\widetilde{M}_p(\cdot, \mathbf{X}^{\top}\mathbf{y}/\sigma^2, \mu)$ (see \eqref{eq:Tilde-M_p}).

\begin{figure}
\centering
\begin{subfigure}[b]{0.32\textwidth}
\includegraphics[width=\textwidth]{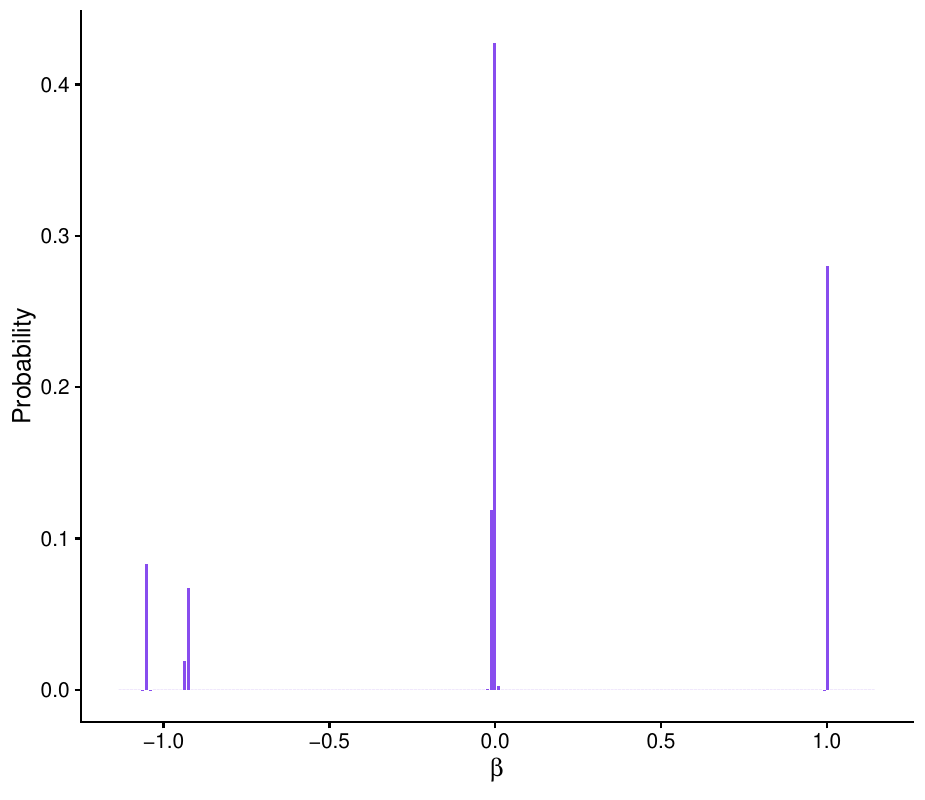}
\end{subfigure}
\hfill
\begin{subfigure}[b]{0.32\textwidth}
\includegraphics[width=\textwidth]{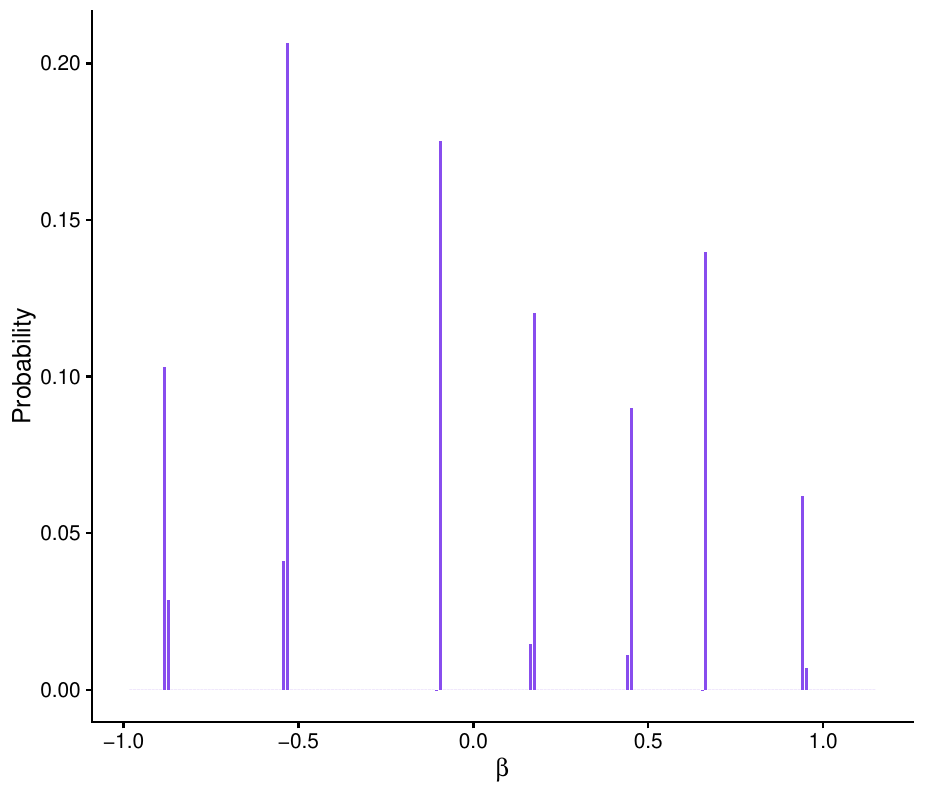}
\end{subfigure}
\hfill
\begin{subfigure}[b]{0.32\textwidth} 
\includegraphics[width=\textwidth]{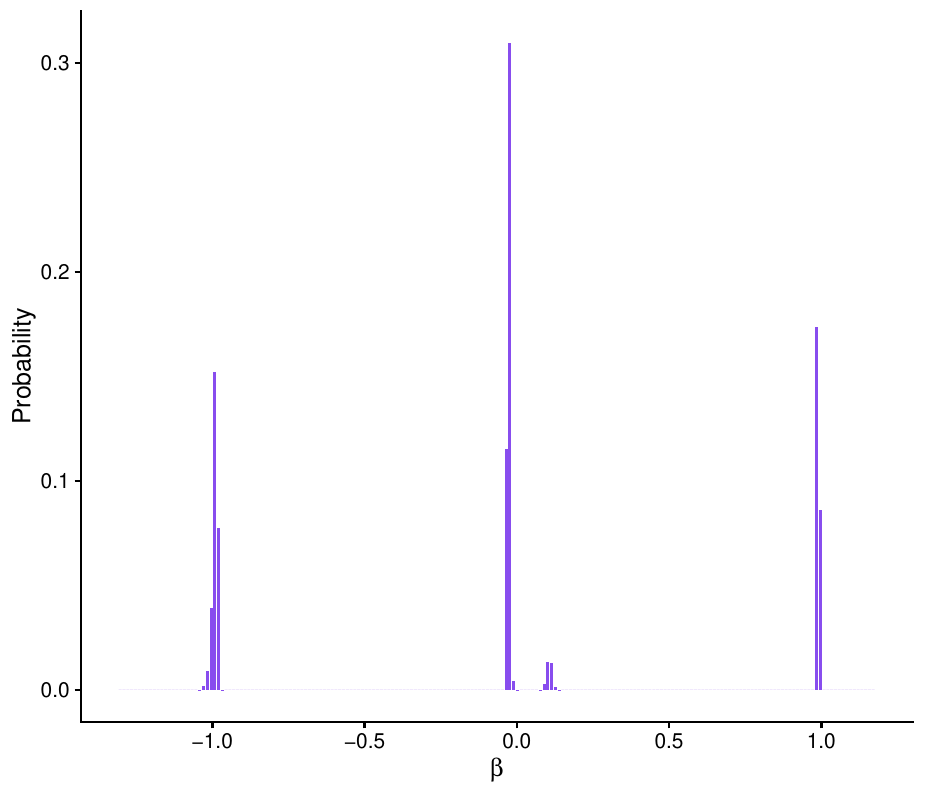}
\end{subfigure} 
\vspace{-0.25cm}
\caption{\small{We plot $\hat{\mu}_\MF$ obtained using the NMF-EB method (see~\eqref{eq:pi-hat}) from 3 different simulation experiments with $n=500$, $p=100$ and $\sigma = 0.1$. (a) {\bf Left plot}: we choose an i.i.d.~Gaussian design (suitably normalized) and the true prior $\mu^*$ is {\it discrete} with $\mu^*(\{-1\}) = \mu^*(\{1\}) = 1/4$, $\mu^*(\{0\})= 1/2$. We see that $\hat{\mu}_\MF$ is close to $\mu^*$ and also has three main regions where mass is concentrated, namely around $-1,0$, and $1$. (b) {\bf Center plot}: we consider $\mu^* = \mathrm{Unif}([-1,1])$ and i.i.d.~Gaussian design (suitably normalized). Observe here that the mass points of the estimated prior $\hat{\mu}_\MF$ are fairly evenly distributed between $[-1,1]$. (c) {\bf Right plot}: we consider a setting where the rows of the design matrix are i.i.d.~samples from $\mathcal{N}(0, \Sigma)$ (suitably normalized), where $\Sigma_{ij} = (0.5)^{|i-j|}$ (thus the predictors are dependent); $\mu^*$ is the same as in (a). Even in this setting the prior is well approximated.}}
\label{fig1} 
\vspace{-0.3cm}
\end{figure}

In Theorem~\ref{lem:approx} we show that, under suitable assumptions, the marginal log-likelihood function $\log m_\mu(\mathbf{y})$ can be approximated by its mean field counterpart $\mathrm{NMF}(\log m_\mu(\mathbf{y});\mu)$, uniformly in $\mu$.  Let $\hat \mu_\MF$ be the mean field MLE obtained by maximizing the above approximation in $\mu$. We derive a non-asymptotic convergence rate of $\hat \mu_\MF$ to $\mu^*$ when $p$ and $n$ are both large. This gives a computationally feasible alternative to estimating $\mu^*$ (via~\eqref{eq:pi-hat}); Figure~\ref{fig1} illustrates the satisfactory finite sample performance of $\hat \mu_\MF$ for different $\mu^*$'s.

Having obtained a consistent estimator of $\mu^*$, we turn to the task of approximating the  posterior distribution $\mathbb{P}_{\mu^{*}}(\cdot \mid \mathbf{y})$ --- the true posterior of $\bbeta$ given the data $\mathbf{y}$.
Note that the NMF scheme~\eqref{eq:Max-ELBO} equivalently solves the optimization problem
\begin{equation}\label{eq:Min-KL-Div} 
\inf_{\nu \in \mathcal{Q}} \dkl( \nu \| \mathbb{P}_{\mu}(\cdot\mid \mathbf{y})),
\end{equation}
where  $\dkl( P \| Q)$ denotes the Kullback–Leibler divergence between the distribution $P$ and the reference probability distribution $Q$; i.e., the optimization problems \eqref{eq:Max-ELBO} and \eqref{eq:Min-KL-Div} have the same set of optimizers (see~\eqref{eq:kl_expression} and Remark~\ref{rem:kl_minimization} for a discussion). Further, recall that for any optimizer $\bar{\nu} = \prod_{i=1}^{p} \bar{\nu}_i$ of \eqref{eq:Min-KL-Div}, $\bar{\nu}_i$ are quadratic tilts of the prior $\mu$. This suggests a natural strategy to approximate the posterior distribution $\mathbb{P}_{\mu^*}(\cdot \mid \mathbf{y})$: given any consistent estimator $\hat{\mu}$ of $\mu^*$, one can estimate the relevant quadratic tilts by optimizing $\widetilde{M}_p(\cdot, \mathbf{X}^{\top}\mathbf{y}/\sigma^2, \hat{\mu})$; the posterior $\mathbb{P}_{\mu^*}(\cdot \mid \mathbf{y})$ can then be approximated using a product measure $\hat{\bar{\nu}} = \prod_{i=1}^{p} \hat{\bar{\nu}}_i$,  where the components $\hat{\bar{\nu}}_i$ are constructed via the estimated quadratic tilts of the (estimated) prior $\hat{\mu}$. In Theorem~\ref{thm:inference}, we establish that this strategy consistently approximates the true posterior $\mathbb{P}_{\mu^*}(\cdot \mid \mathbf{y})$ in 1-Wasserstein distance, provided the objective $\widetilde{M}_p(\cdot, \mathbf{X}^{\top}\mathbf{y}/\sigma^2,\mu^*)$ (suitably reparametrized) has a sequence of ``well-separated" optimizers (see Definition \ref{defn:separation_property} for the precise details). This `well-separated' condition holds under reasonable assumptions on the prior $\mu^*$ and the additive variance $\sigma^2$, as established in \citet[Lemma 5]{mukherjee2022variational}. Indeed, we expect this requirement to be crucial for practical, computationally feasible downstream posterior inference --- see Remark \ref{rem:well_separated_req} for further discussions on this condition. In particular, Theorem~\ref{thm:inference} allows us to consistently estimate the posterior variances, at least in an averaged sense (see Remark~\ref{rem:Post-Var} for a detailed discussion); this is in sharp contrast to the folklore that VI usually underestimates variances.

\begin{algorithm}\caption{Computing NMF-EB estimate $\hat{\mu}_\MF$}\label{alg:npmle}
        \begin{algorithmic}
    \item[]   \textbf{Input}: ${\bf y} \in \R^n$, ${\bf X} \in \R^{n \times p}$. 
    
     \item[(1)] Assume $\mu$ is supported on 
     $[-1,1]$ (any bounded interval will work). 
     
   \item[(2)] Let $\{a_1, \ldots, a_k\}$ be a fixed grid on $[-1,1]$ (e.g., $k \approx $\;100-300). 
   
     \item[(3)] Approximate the prior as $\mu \approx \sum_{r=1}^{k} p_r \delta_{a_r}$, i.e., a discrete measure supported on $\{a_r\}_{r=1}^k$ with unknown probability vector $\{p_r\}_{r=1}^{k}$~\citep{koenker2014convex}. 
     \item[(4)] To compute $\hat{\mu}_\MF$, optimize the probability weights $\{p_r: 1\leq r \leq k\}$ of the NMF approximation by gradient descent (or any optimization technique). This yields 
     \[
(\hat{\mathbf p},\hat{\bgamma})= \arg\max_{\mathbf p\in\Delta_k,\,\bgamma\in\R^p}
\widetilde M_p(\bgamma,\mathbf w,\nu(\mathbf p)),
\qquad
\hat\mu_{\MF}=\nu(\hat{\mathbf p}),
\]
where $\Delta_k$ denotes the $k$-dimensional probability simplex and $\widetilde M_p$ is the NMF objective.
 \end{algorithmic}
      \end{algorithm}


The conclusions of  Theorems~\ref{lem:approx} and~\ref{thm:inference} can be directly used to recover characteristics of $\mu^*$ and provide valid posterior inference for $\bbeta$ (see Corollary~\ref{cor:takeaways}): 
\begin{itemize}
    \item[(i)] \textit{Credible intervals for effect sizes}: 
the estimated posterior $\hat{\bar{\nu}}$ yields Bayesian credible intervals for the effect sizes $\{\beta_i: 1\leq i \leq p\}$. Using Theorem \ref{thm:inference}, we will derive rigorous guarantees on the coverage of these credible sets.

    \item[(ii)] \textit{Optimal estimation of the regression coefficients}:  under an $L^2$ loss, the optimal estimator of $\bbeta$ is given by $\mathbb{E}_{\mu^*}[\bbeta \mid \mathbf{y}]$. A tractable, data driven surrogate of this posterior mean is given by $\mathbb{E}_{\hat{\bar{\nu}}}[\bbeta]$. We will establish that this estimator is Bayes optimal in terms of $L^2$ risk. In particular, this establishes that it is possible to attain Bayes optimal performance with respect to the oracle Bayes procedure using the estimated posterior $\hat{\bar{\nu}}$.  Figure~\ref{fig2} shows that the estimated posterior means $\mathbb{E}_{\hat{\bar{\nu}}}[\bbeta]$ obtained via the NMF-EB approach  are quite close to true posterior means $\mathbb{E}_{\mu^*}[\bbeta \mid \mathbf{y}]$ (approximated via Gibbs sampling).

    \item[(iii)]\textit{Estimate the proportion of non-nulls}: our framework can accommodate $\mu^*(\{0\}) >0$ which implies that the underlying signal $\bbeta$ has order $p$ zero entries. In this sparse setting, the proportion of non-nulls $ \sum_{i=1}^{p} \mathbf{1}(\beta_i \neq 0)/p \approx 1- \mu^*(\{0\})$ is of paramount importance in multiple testing applications~\citep{efron2012large}. The consistency of $\hat{\mu}_{\MF}$ implies that $\hat{\mu}_{\MF}([-\epsilon, \epsilon]) \to \mu^*(\{0\})$  (in probability, as $p \to \infty$ followed by $\epsilon \to 0$). In words, the mean field MLE allows us to consistently recover the proportion of non-null coordinates.

\end{itemize}

\begin{figure}
\centering
\begin{subfigure}[b]{0.325\textwidth}
\includegraphics[width=\textwidth]{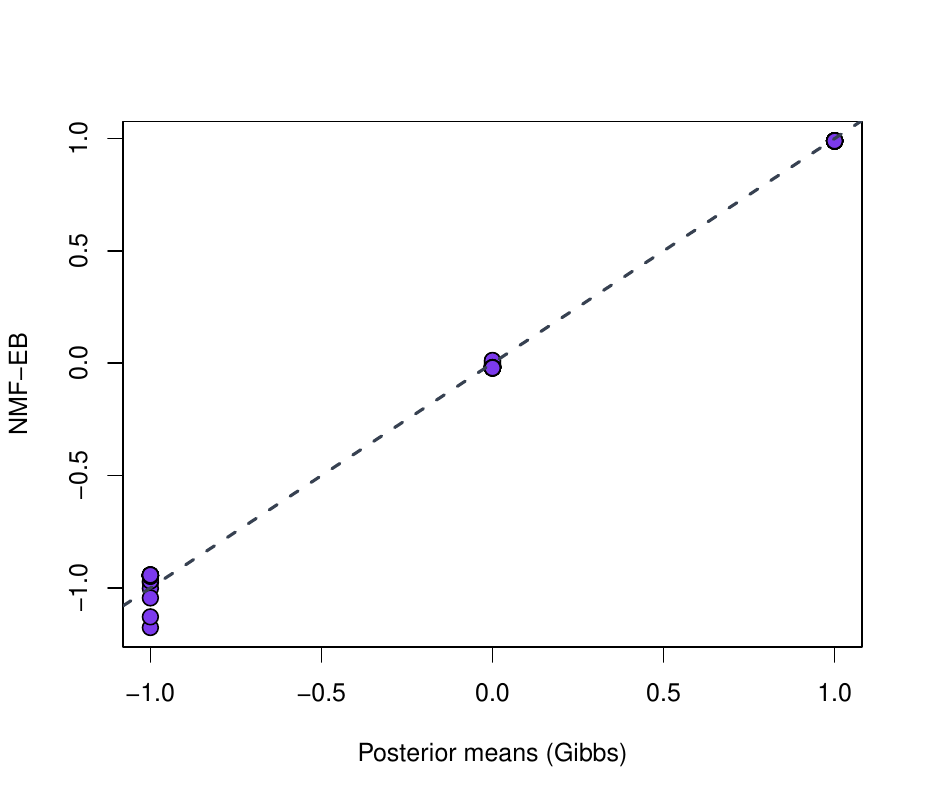}
\end{subfigure}
\hfill
\begin{subfigure}[b]{0.325\textwidth}
\includegraphics[width=\textwidth,]{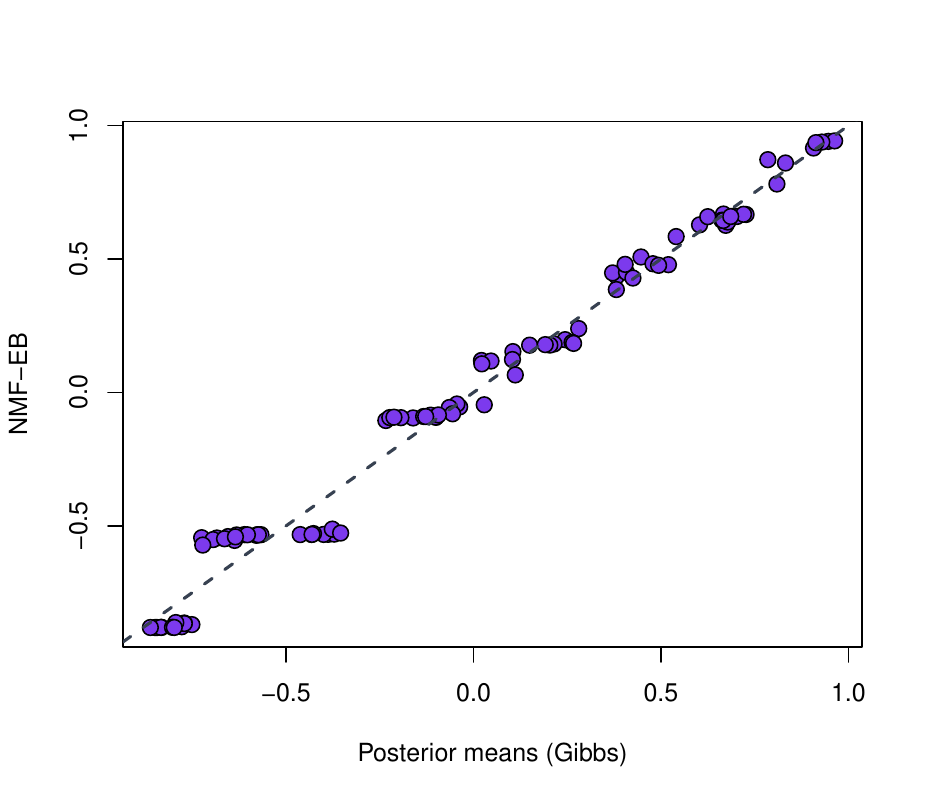}
\end{subfigure}
\hfill
\begin{subfigure}[b]{0.325\textwidth} 
\includegraphics[width=\textwidth]{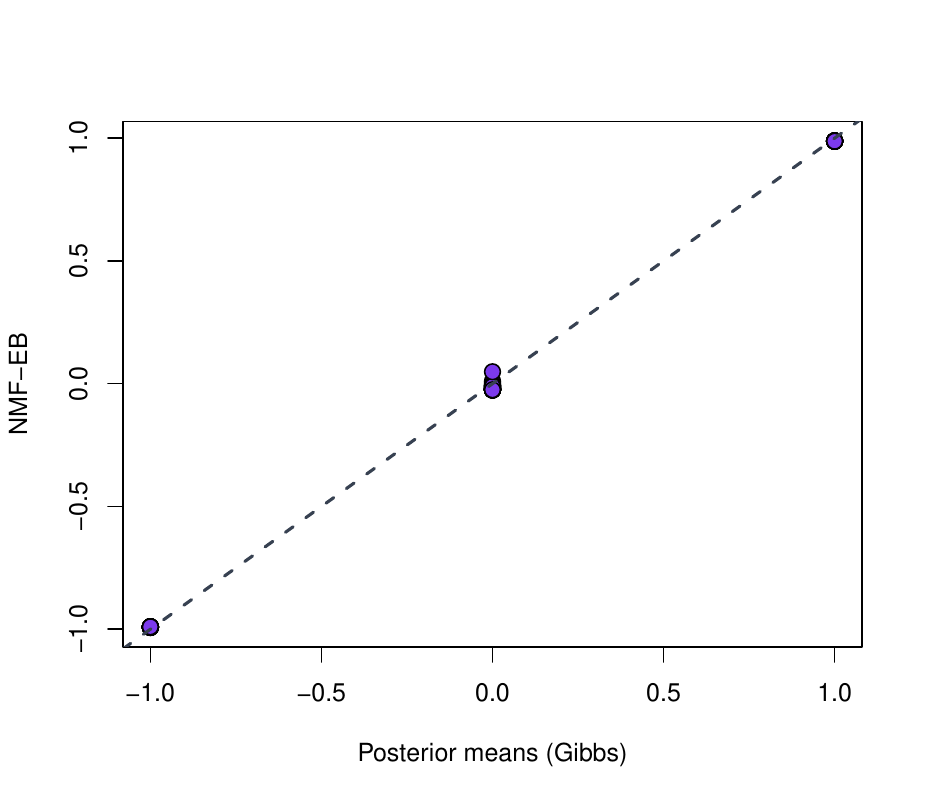}
\end{subfigure} 
\vspace{-0.25cm}
\caption{\small{The posterior means obtained from the NMF-EB approach are plotted against the posterior means estimated via Gibbs sampling (the gold standard approximation of the true posterior means), for the 3 simulation examples shown in Figure~\ref{fig1}. In all settings we see that our NMF-EB approach leads to posterior mean estimates that closely match the ones obtained via MCMC. }}
\label{fig2} 
\vspace{-0.3cm}
\end{figure}

One of the advantages of our current analysis is that we allow the design matrix $\mathbf{X}$ to be either deterministic or random. Moreover, we allow arbitrary dependence between the entries of $\mathbf{X}$; our only requirement on $\mathbf{X}$ (for consistent recovery of $\mu^*)$ is that Assumption~\ref{assump:On-Design} holds.  We further illustrate that without Assumption~\ref{assump:On-Design} we cannot expect consistent estimation of $\mu^*$ (see Example~\ref{eg:eigen}).


\subsection{Prior art} 
We provide an overview of prior related research in this section. 

\noindent \textbf{Empirical Bayes methods}: EB methods for sequence models have been long incorporated in the statistician's toolkit, and are used extensively for deconvolution/denoising problems (see e.g.,~\citet{MR0133191}, \citet{efron1972empirical,efron1972limiting,efron1973combining,efron1973stein},~\citet{MR0696849}, \citet{MR2724758}, \citet{MR3264543}, \citet{johnstone2019gaussian} and the references therein). Developed originally for univariate, homoscedastic sequence models, the methodology and associated theory has been extended to multivariate, heteroscedastic settings in recent years; see e.g.,~\citet{MR2847987}, \citet{MR3036408}, \citet{MR3559950}, \citet{MR3832220}, \citet{banerjee2020nonparametric}, \citet{MR4109006}, \citet{soloff2021multivariate}. In particular, the use of the nonparametric maximum likelihood estimator (NPMLE) for the prior, originally due to \citet{robbins1950generalization}, has witnessed a resurgence in recent years \citet{Jiang-Zhang-2009},~\citet{MR2798524},~\citet{MR3465819},~\citet{gu2016problem},~\citet{koenker2017rebayes},~\citet{feng2018approximate},~\citet{MR3983318},~\citet{kim2020fast},~\citet{polyanskiy2020self},~\citet{MR4102674},~\citet{MR4528473} leading to the widespread application of EB methods in diverse  applications \citet{ver1996parametric},~\citet{MR1946571},~\citet{brown2008season},~\citet{persaud2010comparison}.  EB methods have also been extended for the Poisson sequence model and other associated non-Gaussian analogues (see~\citet{MR0212927},~\citet{MR0258180},~\citet{MR3174656},~\citet{polyanskiy2021sharp},~\citet{jana2022optimal},~\citet{jana2023empirical} and the references therein). The literature on EB methods is substantial --- we refer the interested reader to \citet{MR0696849},~\citet{casella1985introduction},~\citet{zhang2003compound},~\citet{MR3264543},~\citet{maritz2018empirical},~\citet{efron2021empirical} for several excellent surveys and monographs on this subject. 

    On the theoretical front, the consistency of the NPMLE was established in \citet{MR0086464},~\citet{jewell1982mixtures},~\citet{heckman1984method},~\citet{lambert1984asymptotic},~\citet{pfanzagl1988consistency},~\citet[Section 4.2]{groeneboom1992information}; see also the recent survey paper~\citet{chen2017consistency}. For more recent theoretical results for the nonparametric empirical Bayes estimator (in the compound setting) see,~\citet{Ghosal-vdVaart-2001},~\citet{Brown-Greenshtein-2009},~\citet{Jiang-Zhang-2009} and the references therein. The theoretical properties of spike and slab based EB posteriors have been analyzed recently in \citet{MR3885271,zhang2020convergence}.
    Complementing these theoretical advances, there has been significant progress on the properties of the NPMLE in deconvolution problems \citep{lindsay1995mixture}. Several algorithms have been proposed to compute this estimate in practice \citet{MR0212927},~\citet{MR0258180},~\citet{lemon1969empirical},~\citet{bennett1972continuous},~\citet{laird1978nonparametric},~\citet{MR0783372},~\citet{lesperance1992algorithm},~\citet{liu2007partially},~\citet{MR2325271},~\citet{maritz2018empirical}. Exploiting the structural properties of the NPMLE in the Gaussian deconvolution problem,  \citet{koenker2014convex} have developed a particularly efficient convex optimization framework for computing the NPMLE --- this facilitates the use of out-of-the-box efficient convex solvers in this setting. In turn, this facilitates the fast computation of the NPMLE in practice. These insights have been extended to the heteroscedastic case in \citet{soloff2021multivariate}.

    Despite this rapid progress in EB methods over the past decades, most methods are almost completely restricted to the sequence model setting. \citet{carbonetto2012scalable},~\citet{wang2020simple},~\citet{kim2022flexible} use variational approximations (VA) to implement the EB approach in regression problems, and observe superior performance in genetic fine mapping applications. Unfortunately, these methods have lacked theoretical guarantees so far, which prevents more widespread use in other applications. In fact, our paper provides the theoretical justification for the VA-EB approach outlined in the above papers. 

    A notable extension in this regard is the recent article \citet{zhong2022empirical}. In this paper, the authors investigated EB approaches in the context of the PCA problem. Assuming a low rank spike matrix model, the authors combine approximate message passing (AMP) ideas with the NPMLE to derive consistent estimators for the priors on the elements of the low rank spike vectors. This methodology has rigorous guarantees, and exhibits excellent performance on simulated and real data. However, this approach has a crucial shortcoming --- the guarantees are derived under a very specific generative model on the observed data. As a result, the algorithms and the associated theory do not generalize directly to settings beyond PCA. \newline

\noindent \textbf{Variational inference}: Variational approximations provide a general purpose method to approximate intractable probability distributions arising frequently in Bayesian inference (see e.g.,~\citet{wainwright2008graphical}). We refer the interested reader to \citet{Blei-Et-Al-VI} for an introduction to this general area. The naive mean field (NMF) approximation, where the probability distribution of interest is approximated (in KL divergence) by a product distribution represents the simplest form of variational inference. The NMF optimization problem is numerically scalable, which makes it an attractive option for practitioners. Despite the popularity of these methods in diverse applications, the supporting theoretical evidence is surprisingly recent. Rigorous results for the performance of variational methods under classical large $n$ fixed $p$ asymptotics are derived in \citet{wang2019variational,MR4011769} (also see~\citet{MR2221291}). Variational inference methods for the low dimensional linear model were initially explored in \citet{neville2014mean} and~\citet{MR3709863}, while variational approximation based variable selection methods were studied in \citet{carbonetto2012scalable}. 

    Our understanding of variational methods in high-dimensions under sparsity has progressed rapidly in the last few years. \citet{MR3595173},~\citet{han2019statistical},~\citet{ray2020spike},~\citet{MR4102680},~\citet{zhang2020convergence},~\citet{Zhang-Gao-2020},~\citet{Ray-Szabo-2022} study the asymptotic contraction properties of approximate variational methods in high-dimensions. In particular, these works derive explicit conditions for the minimax optimal contraction properties of variational posteriors. At a high-level, this line of research establishes that if the prior puts sufficient mass near the true data model, and the signal strength is sufficiently high, the variational posterior often contracts to the truth at the same rate as the true posterior. In turn, the true posterior usually contracts to the truth at the frequentist optimal (minimax) rate. Consequently, in these regimes, variational methods provide the best of both worlds --- they achieve optimal statistical performance and are simultaneously computationally efficient. On the other hand, the NMF approximation can be grossly inaccurate in high dimensions in the absence of sparsity~\citet{ghorbani2019instability},~\citet{MR4203332},~\citet{MR4600991}. In some of these settings, advanced mean-field approximations, e.g., the Thouless-Anderson-Palmer approximation from statistical physics can often address the shortcomings of the NMF approximation \citet{MR4203332},~\citet{MR4600991}.  We also refer to \citet{katsevich2023approximation} for recent progress on the accuracy of Gaussian variational inference in high-dimensions. 
    
    Beyond the realm of regression models, VI methods have been theoretically analyzed in several settings (see e.g., \citet{MR2796867},~\citet{MR2906876},~\citet{MR3127853},~\citet{MR4152113} and references therein). We emphasize that our techniques are independent of the ones used in these prior works.

\noindent \textbf{Naive mean field and optimal transport}: The optimization problem corresponding to the NMF approximation is generally non-convex. It is typically solved by iterating the fixed point equations corresponding to the mean parameters (\citet{wainwright2008graphical}). Under special conditions, it is well understood that this system is contractive, and the fixed point corresponds to the mean of the distribution. 

We note that there has been very interesting recent progress connecting variational inference with the advances in optimal transport ( \citet{lambert2022variational}, \citet{yao2022mean}, \citet{diao2023forward}). In a nutshell, this line of work leverages advances in computational optimal transport to solve the optimization problem over probability distributions arising in the NMF scheme. \newline

\noindent 
\textbf{Outline:} 
The rest of the paper is structured as follows. In Section~\ref{sec:NPMLE}, we discuss the consistency of the NPMLE in the Bayesian linear regression model. In Section~\ref{sec:MF}, we discuss the NMF approximation, its theoretical properties and explore the associated optimization algorithm. In Section~\ref{sec:post_inf}, we turn to posterior inference using the estimated prior. We establish that under suitable conditions on the model and prior, the estimated posterior attains the same performance as the oracle posterior distribution. We survey some directions for future research and connect our results with the existing literature in Section \ref{sec:discussion}. Finally, the main results are proved in Section \ref{sec:Proofs} and Section \ref{sec:thm_inf}. We defer some technical proofs to the Appendix. 
\vspace{6pt}

\noindent 
\textbf{Notation:} Throughout the paper, we use the usual Landau notation $O(\cdot)$, $o(\cdot)$, $\Theta(\cdot)$ for sequences of real numbers.   $\P_{\mu^*}$ denotes the joint law of $(\mathbf{y},\bbeta)$ under model \eqref{eq:lin_reg}, where  $(\beta_1,\ldots,\beta_p)\stackrel{iid}{\sim}\mu^*$. For sequences of random variables,  we  use the analogous notation $O_{\P_\mu^*}(\cdot)$, $o_{\P_{\mu^*}}(\cdot)$ and $\Theta_{\P_{\mu^*}}(\cdot)$. We use the notation $a\lesssim b (a\lesssim_K b)$ to denote the existence of a finite positive constant $C$ which is universal (depends only on  $K$) such that $a\le Cb$. For any natural number $n \in \mathbb{N}$, $[n]=\{1, \ldots, n\}$. Finally, for a symmetric $n \times n$ matrix $\mathbf{A}$, we use $\|\mathbf{A}\|_2$ to denote the usual $L^2$ operator norm.  Throughout, $d_{\mathcal{H}}$ will denote the Hellinger distance between probability distributions. Finally, for any probability measure $\mu$, $\mu \star N(0,\sigma^2)$ will denote the usual convolution between the measures $\mu$ and $N(0,\sigma^2)$. 
\vspace{6pt}

\section{Nonparametric maximum likelihood}\label{sec:NPMLE}
Recall the Bayesian linear model in~\eqref{eq:lin_reg} and~\eqref{eq:iid-Beta}.
Then the joint distribution $\P_\mu$ of $({\bf y}, {\bm \beta})$ is given by
\begin{align}\label{eq:post}
\frac{d \P_\mu({\mathbf y},{\bm \beta})}{d\lambda^{\otimes n}d\mu^{\otimes p}}:=\Big(\frac{1}{2\pi\sigma^2}\Big)^{\frac{n}{2}} \exp\Big(-\frac{1}{2\sigma^2}\|{\mathbf y}-{\mathbf X}{\bm \beta}\|_2^2\Big),
\end{align}
where $\lambda^{\otimes n}$ refers to the Lebesgue measure in $\R^n$.
Further, we use $\Po(\cdot \mid \mathbf{y})$ to denote the posterior distribution of $\bm \beta$ given $\mathbf{y}$. 
The marginal distribution of ${\bf y}$ has a density with respect to $\lambda^{\otimes n}$, given by~\eqref{eq:marginal}.

In this section, our goal is to consistently estimate the prior $\mu$, having observed the vector ${\bf y}\in \R^n$, in the high dimensional setting when $n,p\to\infty$. To this end, throughout the paper, we will assume that the design matrix ${\mathbf X}$ satisfies the following condition.
\begin{assumption}\label{assump:On-Design}
For a matrix $A$ let $\lambda_{\rm min}(A)$ and $\lambda_{\rm max}(A)$ denote the minimum and maximum eigenvalue of $A$. Suppose
\begin{align}\label{eq:eigen}
C_1\le \frac{\lambda_{\rm min}({\mathbf X}^\top {\mathbf X})}{\sigma^2}\le  \frac{\lambda_{\rm max}({\mathbf X}^\top {\mathbf X})}{\sigma^2}\le C_2,
\end{align}
for some finite positive reals $C_1\in (0,1]$ and $C_2\in [1,\infty)$ (free of $n,p$). 
\end{assumption}
In particular, the above assumption ensures that the matrix ${\mathbf X}$ has full column rank, and thus $n\ge p$. We show that consistent estimation of $\mu$ may not be possible if the lower bound assumption in~\eqref{eq:eigen} is violated; see Example~\ref{eg:eigen} below. Thus the requirement \eqref{eq:eigen} is essentially tight for consistent recovery of $\mu$.

\begin{example}[On the necessity of Assumption~\ref{assump:On-Design}]\label{eg:eigen}
Let $\{{\bf e}_1,\ldots,{\bf e}_{p-1}\}$ be an orthonormal basis of the $(p-1)$ dimensional subspace $\{{\bf z} = (z_1,\ldots, z_p) \in \R^p:\sum_{i=1}^pz_i=0\}$. Let ${\bf X}$ be a $(p-1)\times p$ design matrix with rows given by ${\bf x}_i^\top={\bf e}_i^\top$ for $1\le i\le p-1$. Then $\mu$ is not identifiable and hence there does not exist any consistent estimator for $\mu$. Note that, in this case the eigenvalues of ${\bf X}^\top {\bf X}$ are $\{1,1,\ldots,1,0\}$. A formal proof of this is deferred to Appendix~\ref{Appendix-A}.
\end{example} 

\begin{remark}[Upper bound on the operator norm]\label{rem:norm} 
The upper bound in~\eqref{eq:eigen}  should be interpreted as a low signal-to-noise ratio (SNR) assumption. In fact, this SNR regime represents the most challenging and subtle setting for the prior recovery problem. As an example, if the entries of $\mathbf{X}$ are i.i.d.~from a centered distribution on $\mathbb{R}$ with finite fourth moment, using the Bai-Yin law we have $\lambda_{\max}(\mathbf{X}^{\top} \mathbf{X}) = \Theta_P(n)$~\citep{MR3837109}. The upper bound in~\eqref{eq:eigen} can then be achieved by suitably rescaling (by $1/\sqrt{n}$) the rows of the data matrix $\mathbf{X}$. Indeed, under this scaling, assuming $n \geq p$, the covariance matrix of the least squares estimator of $\bbeta$ is  $\sigma^2 (\mathbf{X}^{\top} \mathbf{X})^{-1}$. Hence, invoking~\eqref{eq:eigen}, each coordinate of the least squares estimate has non-trivial variance/fluctuation in the limit, and consequently $\bbeta$ cannot be consistently estimated under the average $L^2$-loss. However, as our results will demonstrate, consistent estimation of $\mu$ is still possible in this low SNR regime. 

We expect the analysis to be substantially easier in higher SNR regimes, where consistent estimation of $\bbeta$ is possible. Indeed, it is straightforward to consistently estimate the prior $\mu$ when we already have a good estimator of $\bbeta$ (by just using its empirical distribution).
\end{remark}

\subsection{Consistency of the NPMLE}
In this section, we derive the  consistency of the NPMLE $\hat \mu_\ML$. Formally, having observed ${\bf y}\in \R^n$, we now want to estimate the unknown prior distribution $\mu$. To this end, we assume that $\mu \in \mathcal{P}$ where $\mathcal{P}$ is  a  known (possibly infinite dimensional) subset of probability measures on $\R$. In particular, in this paper, we make the following assumption about $\mathcal{P}$:
\begin{assumption}\label{assump:On-P}
$\mathcal{P}$ is a closed subset of the set of all probability measures supported on $[-1,1]$.
\end{assumption}

\begin{remark}[On the compactness of the support] In Assumption~\ref{assump:On-P} we have assumed that the probability measures are compactly supported. 
See Section~\ref{sec:discussion} for an in-depth discussion of this assumption and possible extensions. 
\end{remark}

Note that it is possible that $\mathcal{P}$ is the set of all probability measures on $[-1,1]$. Further, the interval $[-1,1]$ is for definiteness, and could be replaced by any compact interval. We estimate $\mu$  from the observed data via the NPMLE of $\mu$ which is obtained by maximizing the marginal likelihood $m_\mu({\bf y})$ of the data $\mathbf{y}$ with respect to the unknown prior $\mu \in \mathcal{P}$ (see e.g.,~\citet{robbins1950generalization},~\citet{kiefer1956consistency},~\citet{Jiang-Zhang-2009}); in particular, we consider the following optimization problem: 
\begin{align}\label{eq:npmle}
\sup_{\mu\in \mathcal{P}}\log m_\mu({\bf y}).
\end{align}
Let $\{\varepsilon_p\}_{p\ge 1}$ be a (possibly random) sequence of non-negative reals, and let $\hat{\mu}_{\ML} \equiv \hat{\mu}_{\ML}^{(p)}$ denote any sequence of  `$\varepsilon_p$-approximate' optimizers of \eqref{eq:npmle}, i.e.,~$\hat{\mu}_{\ML}$ satisfies
\begin{equation}\label{eq:Approx-MLE}\sup_{\mu\in \mathcal{P}}\log m_\mu({\bf y})\le \log m_{\hat{\mu}_{\ML}}({\bf y})+\varepsilon_p.
\end{equation}
We will assume that $\varepsilon_p=o_{\mathbb{P}_{\mu^*}}(p)$ i.e., $\varepsilon_p/p \stackrel{P}{\to} 0$ as $p\to\infty$, under $\P_{\mu^*}$, where $\mu^*$ is the true (unknown) prior. In particular, if there is a global maximizer of~\eqref{eq:npmle}, we can take $\hat{\mu}_{\ML}$ to be any global maximizer (and choose $\varepsilon_p=0$ for all $p\ge 1$). However, it is possible that this optimization problem has no global maximizers, or has multiple global maximizers, in which case we can work with any optimizer $\hat{\mu}_{\ML}$ satisfying~\eqref{eq:Approx-MLE}. 

\begin{theorem}
\label{thm:ml_rate}
Fix $\mu^* \in \mathcal{P}$. Suppose that Assumptions \ref{assump:On-Design} and \ref{assump:On-P} hold. Let $\hat{\mu}_{\mathrm{ML}}$ denote a sequence of $\varepsilon_p$-approximate optimizers of \eqref{eq:npmle}. If $\varepsilon_p \le p^{5/6}$ a.s., then there exists  constants $c,C$ depending only on $C_1,C_2$ such that  
    \begin{align}
        \mathbb{P}_{\mu^*}\left( d_{\mathcal{H}}^2(\hat{\mu}_{\mathrm{ML}} \star N(0, C_1^{-1}), \mu^* \star N(0, C_1^{-1})) \geq  cp^{-1/6} \right) \le C \exp\Big(- \frac{p^{2/3}}{C}\Big). 
    \end{align}
\end{theorem}
Theorem~\ref{thm:ml_rate} (proved in Section~\ref{sec:Proofs}) establishes a non-asymptotic estimation rate for the NPMLE after smoothing with a Gaussian density, in the Hellinger metric. This result is similar in spirit to the consistency result of the marginal density in the Gaussian sequence model (see e.g.,~\citet[Theorem 4]{Jiang-Zhang-2009}). Note that to establish a polynomial convergence rate, one expects to need some degree of Gaussian smoothing. In fact, without Gaussian smoothing, deconvolution suffers from the slow logarithmic rate, which is known to be minimax optimal (see e.g.,~\citet{Meister-2009} and the references therein). Finally, we do not expect the rate in Theorem~\ref{thm:ml_rate} to be optimal. For the traditional Gaussian sequence model, \citet{Jiang-Zhang-2009} establishes near parametric rates of convergence. We believe characterizing the optimal rate of convergence is an interesting direction for future research.

\begin{remark}[On the proof of Theorem~\ref{thm:ml_rate}]
    We note that existing proofs for rates of convergence of the NPMLE rely heavily on the independence of the observations. Under independence, one can employ classical tools from empirical process theory. In contrast, the observations are dependent in our setting, and standard techniques are not immediately applicable. Consequently, we develop a more hands-on approach to determine the rate of convergence of $\hat{\mu}_{\mathrm{ML}}$ in our problem. In particular, we introduce a novel pseudo-metric $d_{\upsilon^2}(\cdot, \cdot)$ such that if two probability measures $\mu_1, \mu_2 \in \mathcal{P}$ are close w.r.t.~$d_{\upsilon^2}$, the corresponding marginal densities of $\mathbf{y}$ are deterministically close (see Lemma~\ref{lem:non-asymptotic-stability}). This pseudo-metric allows us to overcome the challenge caused by the dependence in the marginal likelihood of $\mathbf{y}$. Next, for an appropriate constant $c>0$, we cover the set $\mathcal{P}_p=\{\mu: d_{\mathcal{H}}(\mu\star N(0, C_1^{-1}), \mu^* \star N(0, C_1^{-1}) \geq c p^{-1/6}\}$ in the pseudo-metric $d_{\upsilon^2}$ (see Lemma~\ref{lemma:covering}). The $d_{\upsilon^2}$ pseudo-metric ensures that within a ball in this cover, the likelihood at any point can be approximated by that at its center. Consequently, to show that $\hat{\mu}_{\mathrm{ML}}$ does not belong to the set $\mathcal{P}_p$ with high-probability, it suffices to show that the likelihood at the truth $\mu^*$ is greater than that at the centers of the cover. We accomplish this using concentration (see Lemma~\ref{lem:exp_concentration}) and a data-processing inequality (see Lemma~\ref{lemma:chain_rule}). We believe that the pseudo-metric $d_{\upsilon^2}$ is one of our main conceptual contributions in this context, and believe that the proof strategy introduced here might be useful in other problems with dependent marginals.  
\end{remark}

\section{Variational approximation: naive mean field}\label{sec:MF}

Even though the NPMLE $\hat{\mu}_{\ML}$ is consistent under minimal assumptions, computing the NPMLE involves optimizing  $m_\mu(\mathbf{y})$, which is (usually) an intractable integral in $\R^p$. In this section we study a computationally feasible alternative that first approximates (lower bounds) $\log m_\mu({\bf y})$ via the ELBO~\eqref{eq:ELBO}, and then optimizes the lower bound with respect to $\mu \in \mathcal{P}$. To this end, we introduce some notation.

\begin{definition}[Quadratic tilt]\label{defn:Quad-Tilt}
Let $\mu$ be a probability distribution on $\R$. For any $(\gamma_1,\gamma_2)\in \R \times (0,\infty)$ set
\begin{align}\label{eq:cgamma}
c_\mu(\gamma_1,\gamma_2):=\log\Big[ \int  \exp\Big(\gamma_1 \beta-\frac{\gamma_2}{2}\beta^2\Big)d\mu(\beta)\Big],
\end{align}
and define the probability distribution $\mu_{\gamma_1,\gamma_2}$ on $\R$ by setting
\begin{equation}\label{eq:Tilt-mu}
\frac{d\mu_{\gamma_1,\gamma_2}(\beta)}{d\mu} := \exp\Big(\gamma_1 \beta-\frac{\gamma_2}{2}\beta^2 -c_\mu(\gamma_1,\gamma_2)\Big), \qquad \mathrm{for} \;\; \beta \in \R.
\end{equation}
Note that $\mu_{\gamma_1,\gamma_2}$ is an exponential family w.r.t.~the base measure $\mu$ and natural sufficient statistic $(\beta,\beta^2)$; we refer to $\mu_{\gamma_1,\gamma_2}$ as a quadratic tilt of the probability measure $\mu$. Fixing $\gamma_2=d>0$, it follows from standard properties of exponential families that the mean of the measure $\mu_{\gamma_1,\gamma_2}$ is $\dot{c}_\mu(\gamma_1,d):=\frac{\partial c_\mu(\gamma_1,d)}{\partial \gamma_1}$, and the function $\dot{c}_\mu(\cdot,d)$   is strictly increasing on $\R$ with range $I_{\mu} := (\inf \mathrm{Supp}(\mu), \sup \mathrm{Supp}(\mu)).$
Thus it has an inverse
$h_{\mu,d} : I_{\mu} \to \mathbb{R}$ which satisfies 
\begin{align}
\dot{c}_{\mu} (h_{\mu,d}(u), d)  = u, \qquad \mbox{for } u \in I_\mu. \label{h:defn} 
\end{align} 
Note that for any $u$ in the mean parameter space $I_\mu$, $h_{\mu,d}(u)$ gives the natural parameter $\gamma_1 \in \R$ of the tilted measure $\mu_{\gamma_1,d}$ such that $\dot{c}_\mu(\gamma_1,d) = u$. Finally, define the function $G_{\mu, d}: I_{\mu} \to \R$ as
    \begin{align}
        G_{\mu, d}(u) := \dkl(\mu_{h_{\mu,d}(u),d} \| \mu_{0,d} )=u \,h_{\mu,d} (u) - c_{\mu}(h_{\mu,d}(u),d) + c_{\mu}(0,d). \label{eq:G_defn}  
    \end{align}
Here the second equality follows from a direct calculation. 
\end{definition}
 It is known that the set of optimizers in the NMF optimization problem~\eqref{eq:Min-KL-Div} always lie in this class of tilts~\citet{mukherjee2022variational},~\citet{yan2020nonlinear}. Note that, for any probability measure $\mu$, $c_\mu(\gamma_1,\gamma_2) < \infty$, for any $(\gamma_1,\gamma_2)\in \R \times (0,\infty)$.
\begin{definition}\label{def:Z}
Let $A\in \R^{p\times p}$ be the off-diagonal part of the matrix $\sigma^{-2}{\mathbf X}^\top{\mathbf X}$ with diagonal entries 0, 
\begin{equation}\label{eq:notation}
{\mathbf w}:=\sigma^{-2}{\mathbf X}^\top{\mathbf y}, \qquad \mbox{and} \qquad d_i:=\sigma^{-2}({\mathbf X}^\top{\mathbf X})_{ii} \quad \mathrm{for} \;\;  i=1,\ldots, p.
\end{equation}
\end{definition}
Armed with this notation, $m_\mu({\mathbf y})$ in~\eqref{eq:marginal} can be expressed as
{\small \begin{align}\label{eq:mpiy}
\notag &\Big(\frac{1}{2\pi\sigma^2}\Big)^{\frac{n}{2}} \exp\Big(-\frac{\|{\mathbf y}\|_2^2}{2\sigma^2}\Big) \int \exp\Big(-\frac{1}{2}{\bm \beta}^\top A{\bm \beta}+{\mathbf w}^\top {\bm \beta}\Big)\exp\Big(-\frac{1}{2}\sum_{i=1}^p d_i \beta_i^2\Big)d\mu^{\otimes p}({\bm \beta})\\
\notag=&\;\Big(\frac{1}{2\pi\sigma^2}\Big)^{\frac{n}{2}} \exp\left(-\frac{\|{\mathbf y}\|_2^2}{2\sigma^2}+ \sum_{i=1}^p c_\mu\Big(0,d_i\Big)\right) \int  \exp\Big(-\frac{1}{2}{\bm \beta}^\top A{\bm \beta}+{\mathbf w}^\top {\bm \beta}\Big)\prod_{i=1}^pd\mu_{0, d_i}(\beta_i)\\
=&\;\Big(\frac{1}{2\pi\sigma^2}\Big)^{\frac{n}{2}} \exp\left(-\frac{\|{\mathbf y}\|_2^2}{2\sigma^2}+ \sum_{i=1}^p c_\mu\Big(0,d_i\Big)\right) Z_p({\bf w},\mu),
\end{align}}
where in the second display we have replaced the base measure from $\mu$ to $\mu_{0, d_i}$ for $i \in [p]$, and we define
\begin{align}\label{eq:zp}
Z_p({\bf v},\mu):= \int_{\R^p}  \exp\Big(-\frac{1}{2}{\bm \beta}^\top A{\bm \beta}+{\mathbf v}^\top {\bm \beta}\Big)\prod_{i=1}^pd\mu_{0, d_i}(\beta_i), \qquad \mbox{for}\;\;  {\bf v} \in \R^p, \; \mu \in \mathcal{P}.
\end{align} 
Note that $Z_p(\mathbf{w},\mu)$ is the only challenging term in the computation of $m_\mu(\mathbf{y})$. To bypass the computational challenge arising in this context, we will utilize the ELBO. To this end, we introduce some notation that will be critical in our subsequent discussion.

\begin{definition}
For any ${\bm \gamma} = (\gamma_1,\ldots, \gamma_p)\in \R^p$ and a probability measure $\mu$ on $\R$, let ${\bf u}^{(\mu)}({\bm \gamma}) \equiv (u^{(\mu)}_{i}(\gamma_i))_{i\in [p]}\in \R^p$ be defined as
\begin{equation}\label{eq:u_i}
u^{(\mu)}_{i}(\gamma_i) :=\dot{c}_\mu\Big(\gamma_i,  d_i\Big)=\E_{\mu_{\gamma_i, d_i}}[\beta_i],
\end{equation}
where the second equality uses Definition \ref{defn:Quad-Tilt} above.
Also, for $ {\bf v} \in \R^p$ (and $\bgamma,\mu$ as above) set
{\small \begin{align}\label{eq:M_p}
\notag M_p(\bm \gamma, {\bf v}, \mu):&= -\frac{1}{2} {\bf u}^{(\mu)}({\bm \gamma})^\top A \, {\bf u}^{(\mu)}({\bm \gamma})+{\bf u}^{(\mu)}({\bm \gamma})^\top {\mathbf v}-\sum_{i=1}^p\dkl(\mu_{\gamma_i,d_i}\|  \mu_{0,d_i})\\
&=-\frac{1}{2} {\bf u}^{(\mu)}({\bm \gamma})^\top A \, {\bf u}^{(\mu)}({\bm \gamma})+{\bf u}^{(\mu)}({\bm \gamma})^\top {\mathbf v}-{\bf u}^{(\mu)}({\bm \gamma})^\top {\bm \gamma}+\sum_{i=1}^pc_\mu\Big(\gamma_i, d_i\Big)-\sum_{i=1}^pc_\mu\Big(0, d_i\Big),
\end{align}}
where the second equality uses~\eqref{eq:G_defn}.
\end{definition}

Let us first motivate the term $M_p(\bm \gamma, {\bf v}, \mu)$. By direct computation (see e.g.,~\citet{wainwright2008graphical}), for any distribution $\nu$ absolutely continuous w.r.t.~$\prod_{i=1}^{p} \mu_{0,d_i}$, i.e., $\nu \ll \prod_{i=1}^{p} \mu_{0,d_i}$, we have 
{\small \begin{align}
    \log Z_p(\mathbf{w},\mu) =  \Big[ \mathbb{E}_{\bar{\nu}}\Big( - \frac{1}{2} \bbeta^{\top} A \bbeta + \mathbf{w}^{\top}\bbeta   \Big) - \mathrm{D}_{\mathrm{KL}}\Big(\bar{\nu} \| \prod_{i=1}^{p} \mu_{0,d_i} \Big) \Big] + \mathrm{D}_{\mathrm{KL}} (\bar{\nu} \| \mathbb{P}_{\mu}(\cdot \mid \mathbf{y})) \label{eq:kl_expression} 
\end{align}}
where $\bar{\nu} = \prod_{i=1}^p \nu_i$ is a product distribution on $\R^p$.
The non-negativity of KL-divergence immediately implies the lower bound 
\begin{align}
    \log Z_p(\mathbf{w}, \mu) \geq \sup_{\bar{\nu} \ll \prod_{i=1}^{p} \mu_{0,d_i}} \Big[ \mathbb{E}_{\bar{\nu}}\Big( - \frac{1}{2} \bbeta^{\top} A \bbeta + \mathbf{w}^{\top}\bbeta   \Big) - \mathrm{D}_{\mathrm{KL}}\Big(\bar{\nu} \| \prod_{i=1}^{p} \mu_{0,d_i} \Big) \Big]. \label{eq:elbo} 
\end{align}
It is well-known (see for e.g.,\citet{wainwright2008graphical},~\citet{mukherjee2022variational}) that any global optimizer of the right hand side above has the form $\bar{\nu} = \prod_{i=1}^{p}\mu_{\gamma_i,d_i}$ for some $\bgamma = (\gamma_1, \ldots, \gamma_p) \in \mathbb{R}^p$. Thus one simply needs to optimize over the scalar parameters $\bgamma \in \mathbb{R}^p$. The quantity $M_p(\bgamma,\mathbf{w}, \mu)$, defined in~\eqref{eq:M_p}, represents the right hand side of the above display evaluated at $\bar{\nu} = \prod_{i=1}^{p} \mu_{\gamma_i,d_i}$; this directly yields the ELBO, which states that 
\begin{align}
    \log Z_p(\mathbf{w},\mu) \geq \sup_{\bgamma \in \mathbb{R}^p} M_p(\bgamma, \mathbf{w}, \mu). \label{eq:elbo_lower} 
\end{align}
Plugging this back into \eqref{eq:mpiy}, one immediately obtains the ELBO for $\log m_{\mu}(\mathbf{y})$:
\begin{align}
    \log m_{\mu}(\mathbf{y}) \geq -\frac{n}{2} \log (2 \pi \sigma^2) - \frac{\|\mathbf{y}\|_2^2 }{2 \sigma^2 } + \sum_{i=1}^{p} c_{\mu} (0,d_i) + \sup_{\bgamma \in \mathbb{R}^p} M_p(\bgamma, \mathbf{w},\mu). \label{eq:elbo_explicit}
\end{align}

Our next result (proved in Section~\ref{pf:lem-approx}) gives a quantitative bound to the approximations in \eqref{eq:elbo_lower}
and \eqref{eq:elbo_explicit}. Utilizing this, it identifies sufficient conditions for which these inequalities are \emph{tight} to leading order.


\begin{theorem}\label{lem:approx}
Suppose that Assumptions~\ref{assump:On-Design} and~\ref{assump:On-P} hold.

\begin{enumerate}
\item[(a)]
With $Z_p(\mathbf{v}, \mu)$ as in \eqref{eq:zp}, there exists a universal constant $\kappa$ such that
$$\sup_{\mathbf{v} \in \R^p} \sup_{\mu \in \mathcal{P}} \; \Big|\log Z_p(\mathbf{v}, \mu) - \sup_{{\bm \gamma}\in \R^p} M_p({\bm \gamma},{\bf v}, \mu)\Big| \le \kappa p^{2/3} {\rm Tr}(A^2)^{1/3}.$$

\item[(b)]
Set, for ${\bm \gamma}, {\bf v} \in \R^p$ and $\mu \in \mathcal{P}$,
\begin{align}
\widetilde{M}_p({\bm \gamma},\mathbf{v},\mu) &:=  M_p({\bm \gamma},\mathbf{v},\mu)+\sum_{i=1}^pc_\mu(0,d_i) \nonumber \\
& \;\,= \; -\frac{1}{2} {\bf u}^{(\mu)}({\bm \gamma})^\top A \, {\bf u}^{(\mu)}({\bm \gamma})+{\bf u}^{(\mu)}({\bm \gamma})^\top {\mathbf v}-{\bf u}^{(\mu)}({\bm \gamma})^\top {\bm \gamma}+\sum_{i=1}^pc_\mu\Big(\gamma_i, d_i\Big), \label{eq:Tilde-M_p}
\end{align}
and consider the following optimization problem:
\begin{equation}\label{eq:mu-MF}
\sup_{\mu\in \mathcal{P}} \sup_{{\bm \gamma}\in \R^p} \widetilde{M}_p({\bm \gamma},{\bf w},\mu),
\end{equation}
where, as before, ${\bf w} = \mathbf{X}^\top \mathbf{y}/\sigma^2$.
Let $\hat{\mu}_{\MF}$ be any sequence of $\varepsilon_p$-approximate maximizers of the above optimization problem, i.e.,
$$\sup_{\mu\in \mathcal{P}}\sup_{{\bm \gamma}\in \R^p}\widetilde{M}_p({\bm \gamma},{\bf w}, {\mu})\le \sup_{{\bm \gamma}\in \R^p}\widetilde{M}_p({\bm \gamma},{\bf w}, \hat{\mu}_{\MF})+\varepsilon_p. $$
If  $\varepsilon_p \le p^{5/6}$ a.s., then there exists constants $c,C$ depending only on $C_1,C_2$, such that  
    {\small \begin{align}
        \mathbb{P}_{\mu^*}\Big( d_{\mathcal{H}}^2(\hat{\mu}_{\mathrm{MF}} \star N(0, C_1^{-1}), \mu^* \star N(0, C_1^{-1})) > c \Big( \frac{1}{p^{1/6}} + \frac{\mathrm{Tr}(A^2)^{1/3}}{p^{1/3}} \Big) \Big) \le C e^{-\frac{p^{2/3}}{C}}. \label{eq:mf_rate}
    \end{align}}
    \end{enumerate}
\end{theorem}
In words, part (a) of the above theorem establishes that the lower bound in \eqref{eq:elbo_lower} is tight up to $O(p^{2/3} \mathrm{Tr}(A^2)^{1/3})$ corrections, uniformly in $\mathbf{v} \in \mathbb{R}^p$ and $\mu \in \mathcal{P}$. We further emphasize that the optimization problem \eqref{eq:mu-MF} is equivalent to optimizing the right hand side in \eqref{eq:elbo_explicit}. Since the NPMLE (see~\eqref{eq:npmle}) is obtained by maximizing the left hand side of~\eqref{eq:elbo_explicit}), one would hope that any approximate optimizer $\hat{\mu}_{\mathrm{MF}}$ of \eqref{eq:mu-MF} would also be consistent for $\mu^*$, given that the NPMLE is consistent, by Theorem \ref{thm:ml_rate}. This is the main takeaway from \eqref{eq:mf_rate}, which also derives a non-asymptotic convergence rate for $\hat{\mu}_{\mathrm{MF}}$.

\begin{remark}[On the assumptions of Theorem~\ref{lem:approx}]\label{rem:np} Note that in Theorem~\ref{lem:approx} we have an extra condition on the design matrix (${\rm Tr}(A^2)=o(p)$), compared to Theorem~\ref{thm:ml_rate}. Upon noting that for $i\neq j$, $A_{ij}$ denotes the inner-product between the $i^{th}$ and $j^{th}$-columns of $\mathbf{X}$, we see that this condition enforces that the correlations among the features are relatively weak. 
Equivalently, this condition ensures that the typical eigenvalues of $A$ are small, and hence the matrix $A$ can be ``approximated'' by a ``low-rank'' matrix $\hat A$ (i.e., rank$(\hat A) = o(p)$). 
In fact, the condition $\mathrm{Tr}(A^2) = o(p)$ cannot be relaxed, as illustrated in the following example. 
Consider the Gaussian design setting, where the entries of $\mathbf{X}$ are i.i.d.~mean-zero Gaussian with variance $1/n$ (this ensures that the operator norm of $\mathbf{X}^{\top}\mathbf{X}$ is $\Theta_P(1)$; see e.g.,~\citet{MR3837109}). A straightforward calculation shows that $\mathrm{Tr}(A^2)=o(p)$ if and only if $p = o(n)$. Our result thus guarantees the accuracy of the NMF approximation in the entire regime $p=o(n)$. In sharp contrast, the NMF approximation is conjectured to fail when $n$ and $p$ are both large and comparable (see Section~\ref{sec:discussion} for an in-depth discussion). Finally, we emphasize that although the NMF approximation fails in the regime $p/n \to \kappa \in (0,1)$, Theorem~\ref{thm:ml_rate} continues to hold, and thus the NPMLE remains consistent.

The condition $\mathrm{Tr}(A^2)=o(p)$ holds trivially if the matrix $\mathbf{X}^{\top} \mathbf{X}$ is diagonal. In this case, the posterior is itself a product measure. One might naturally presume that under the condition $\mathrm{Tr}(A^2)= o(p)$, our results go through as the posterior is ``close" to a product measure. We caution the reader that the actual picture is significantly more complicated --- in fact, even in the special case of $A_{ij} = 1/p$ (i.e., there is small but non-trivial correlations among the features) and $\mathbf{y}=0$, it is well-known that for $\sigma^2$ small enough, the posterior distribution can be approximated as a mixture of two distinct product distributions \citep{MR0503332}. In general, under the assumption $\mathrm{Tr}(A^2)=o(p)$, the posterior can only be approximated as a mixture of product distributions with ``not too many components" \citep{MR4038047}. We emphasize that Theorem~\ref{lem:approx} holds even if the posterior distribution is a mixture, rather than a simple product distribution. In our subsequent discussion (see Theorem \ref{thm:inference}) we identify additional conditions on the design which ensures that the posterior can be approximated by a unique product measure.
\end{remark}

\begin{remark}[Approximating the posterior distribution]\label{rem:kl_minimization}
When combined with \eqref{eq:kl_expression}, part (a) of Theorem \ref{lem:approx}  yields that     $$ \sup_{{\bf y}\in \R^n} \sup_{\mu \in \mathcal{P}} \left\{\inf_{\bar{\nu} = \prod_{i=1}^p \nu_i} D_{\mathrm{KL}}\big( \bar{\nu} \| \P_{\mu}(\cdot \mid {\bf y}) \big)\right\}=O(p^{2/3} \mathrm{Tr}(A^2)^{1/3}),$$
i.e.,~the posterior is (uniformly in $\mu \in \mathcal{P}$) mean field up to $O(p^{2/3} \mathrm{Tr}(A^2)^{1/3})$ corrections. This suggests an algorithmic route to approximating the posterior distribution $\mathbb{P}_{\mu}(\cdot \mid \mathbf{y})$. Given any consistent estimator $\hat{\mu}$ of $\mu$, the  optimizer of the right hand side of~\eqref{eq:Tilde-M_p} can be utilized to identify the optimal quadratic tilts $\hat{\bm \gamma}$ (of $\hat \mu$). Suppose that $\hat{\bm \gamma}$ and $\hat{\mu} \equiv \hat \mu_\MF$ are the global maximizers of \eqref{eq:mu-MF}. Then the product measure
$\prod_{i=1}^p\hat{\mu}_{\hat{\gamma}_i,d_i}$
    is a ``good'' approximation of the true posterior $\P_{\mu}(\cdot \mid {\bf y})$. This intuition is formalized in Theorem \ref{thm:inference}-(b) below.    
\end{remark}

\subsection{Numerical computations}
\label{sec:numerical}

Suppose that the true unknown prior of the $\beta_i$'s is $\mu^* \in \mathcal{P}$. In this subsection we describe the computations involved in estimating the unknown prior $\mu^*$ by (an approximation of) $\hat{\mu}_{\MF}$ (see~\eqref{eq:mu-MF}). In the sequel we assume that $\mathcal{P}$ is the class of all probability distributions on $[-1,1]$, although similar computations can be done for other classes --- e.g., the class of all scale mixture of normals (see~\citet{kim2022flexible}). Even though $\mathcal{P}$ is infinite dimensional, we can readily approximate $\mathcal{P}$ by a class of discrete distributions; this is a common strategy employed in computing NPMLEs in mixture models (see e.g.,~\citet{koenker2014convex, kim2022flexible}). For simplicity let $-1 = a_1 < a_2 < \ldots < a_{k-1} < a_k =1$ be a fine discretization of the interval $[-1,1]$;~\citet{koenker2014convex} recommends taking $k \approx 300$ for typical problems.  
For any ${\bf p} = (p_1,\ldots,p_k) \in \Delta_k$, the probability simplex in $\R^k$, define the probability measure $\nu({\bf p})$ supported on $\{a_1, \ldots, a_k\}$ as $$\nu({\bf p}):=\sum_{r=1}^k p_r\delta_{a_r}.$$ Then, the natural (discrete) analogue to the optimization problem~\eqref{eq:mu-MF} is to solve:
\begin{equation}\label{eq:Opt-M_p}
\max\limits_{ {\bf p} \in \Delta_k} \;\;\sup_{{\bm \gamma}\in \R^p}  \widetilde{M}_p({\bm \gamma}, \mathbf{w},\nu({\bf p})),
\end{equation}
where $\widetilde{M}_p({\bm \gamma}, \mathbf{w} ,\nu({\bf p}))$ is defined in~\eqref{eq:Tilde-M_p}. Note that~\eqref{eq:Opt-M_p} is a finite dimensional smooth optimization problem, with variables ${\bf p} \in \Delta_k$  and ${\bm \gamma}\in \R^p$, which can be solved via a variety of first order optimization methods. In our numerical studies, we used an iterative approach to solving~\eqref{eq:Opt-M_p}, where we fix ${\bm \gamma}$ and optimize over ${\bf p} \in \Delta_k$, and then fix ${\bf p}$ and optimize over ${\bm \gamma}$. 

For a fixed ${\bm \gamma}$ to optimize over ${\bf p}$, we used projected gradient descent (see e.g.,~\citet[Chapter 8]{Beck-2017}) as ${\bf p}$ is constrained to lie in the unit simplex $\Delta_k$.  On the other hand, for a fixed ${\bf p} \in \Delta_k$ to optimize over ${\bm \gamma}$, we used the Broyden–Fletcher–Goldfarb–Shanno (BFGS) algorithm  --- an iterative method for solving nonlinear optimization problems (see e.g.,~\citet[Chapter 6]{Nocedal-Wright-2006}) --- implemented via the \texttt{optim} function in the \texttt{stats} package in the \texttt{R} programming language. To initialize the iterations, we start from an estimate $\hat{\bbeta}$ of $\bbeta$ --- we advocate using the ordinary least squares estimator (when $p$ is relatively small compared to $n$) or the  Lasso with a small penalty parameter (when $n$ and $p$ are comparable). Given $\hat{\bbeta}$, we set $\bgamma = \mathbf{w} - A \hat{\bbeta}$ (see Appendix~\ref{sec:Opt-Algo} for an intuition behind this initialization). The initialization for $\mathbf{p}$ is easier; we just set $\mathbf{p} = (1/k,\ldots, 1/k) \in \R^k$. In Appendix~\ref{sec:Opt-Algo} we give the explicit expressions for the gradient of the objective $\widetilde{M}_p({\bm \gamma}, \mathbf{w} ,\nu({\bf p}))$ w.r.t.~the parameters ${\bm \gamma}$ and ${\bf p}$. We note that our BFGS algorithm is generally guaranteed to converge to a local optimum rather than a global maximum. However, the algorithm exhibits excellent performance in practice. It would be interesting to know if the BFGS algorithm converges to a global optimizer under some conditions on the design; we leave this for future work.  

\section{Posterior inference}
\label{sec:post_inf}

In this section we study the consistency of the approximate posterior of $\bbeta$ given ${\bf y}$ obtained from the NMF approximation via the solution to the optimization problem~\eqref{eq:mu-MF}.

From an implementational perspective this posterior approximation can be computed by solving the optimization problem~\eqref{eq:Opt-M_p}. Suppose $(\hat {\bf p},\hat {\bm \gamma})$ are the unique global maximizers of this optimization problem. Then the posterior distribution can be approximated via
\begin{align} 
\label{eq:posterior_intuition}
\mathbb{P}_{\mu}(\cdot \mid {\bf y})\approx \prod_{i=1}^p \nu(\hat {\bf p})_{\hat \gamma_i,d_i} = \prod_{i=1}^p \left[\sum_{r=1}^k \hat p_r \exp\Big(a_r {\hat \gamma}_i-\frac{a_r^2 d_i}{2}-c_{\nu({\hat{\bf p}})}\Big(\hat \gamma_i, d_i\Big)\Big)\delta_{a_r} \right].
\end{align}
The above approximation will be rigorized in Theorem~\ref{thm:inference} below.

 For the subsequent discussion, it will be helpful to reparametrize the function $M_p({\bm \gamma}, {\bf v}, \mu)$ introduced in~\eqref{eq:M_p} 
in terms of the mean vector ${\bf u} \in \R^p$ instead of the natural parameter $\bm \gamma\in \R^p$. With ${\bf d} := (d_1,\ldots, d_p) \in (0,\infty)^p$ as in~\eqref{eq:notation}, 
 for ${\bf u} := (u_1,\ldots, u_p), {\bf v} \in \R^p$, define
\begin{align}
\notag\M_p( \mathbf{u},{\bf v}, \mu):&=-\frac{1}{2} {\mathbf{u}}^\top A \, {\mathbf{u}}+{\bf u}^\top {\mathbf v}-\sum_{i=1}^p\dkl(\mu_{h_{\mu,d_i}(u_i),d_i},\mu_{0,d_i}) \\
&=-\frac{1}{2} {\mathbf{u}}^\top A \, {\mathbf{u}}+{\bf u}^\top {\mathbf v}-{\bf u}^\top h_{\mu, \mathbf{d}}({\bf u})+\sum_{i=1}^p c_\mu\Big(h_{\mu, d_i}(u_i), d_i\Big) -\sum_{i=1}^pc_\mu\Big(0, d_i\Big), \label{eq:M_mean_param}
\end{align} 
where by $h_{\mu,\mathbf{d}}({\bf u})$ we mean the vector in $\R^p$ with the $i$-th coordinate $h_{\mu, d_i}(u_i)$. We note the analogy of the above definition with that of $M_p(\cdot, \cdot, \mu)$ (see~\eqref{eq:M_p}) which is parametrized in terms of the natural parameter $\bm \gamma\in \R^p$.
The notion of a well-separated optimum of $\M_p(\cdot, {\bf w}, \mu)$, introduced next,  will be crucial in the sequel. 
\begin{definition}[Well-separated optimizer]
\label{defn:separation_property}
Fix a probability measure $\mu^*$ on $\R$, and let ${\bf w} := \mathbf{X}^\top \mathbf{y}/\sigma^2$ as in \eqref{eq:notation}.  We say that $\M_p(\cdot, {\bf w}, \mu^*)$ has a well-separated optimizer if there exists a sequence of random vectors $\mathbf{u}^* \in I_{\mu^*}^p$ such that for any $\delta>0$, there exists $\varepsilon>0$ such that 
 \begin{align}
\mathbb{P}_{\mu^*}\Big( \sup_{\| \mathbf{u} -{\mathbf{u}}^*\|_2^2 > p \delta} \M_p(\mathbf{u} , {\bf w}, \mu^*) < \sup_{\mathbf{u} \in I_{\mu^*}^p} \M_p(\mathbf{u}, {\bf w}, \mu^*) - p \varepsilon \Big) \to 1 \qquad \mbox{as } p \to \infty. \nonumber 
 \end{align} 
\end{definition} 
Note that we keep the $p$-dependence of $\mathbf{u}^*$ implicit in our definition. Further, note that $\mathbf{u}^*$ is random, as $\mathbf{w}$ is random.  In \citet[Lemma 5]{mukherjee2022variational}, the authors discuss some easily verifiable conditions for the existence of a well-separated optimizer. We present an analogue here to keep the discussion self-contained; we sketch a proof in Section~\ref{sec:thm_inf}. 
\begin{lemma}
\label{lemma:separation_sufficient} 
The function $\mathcal{M}_p(\cdot, \mathbf{w},\mu^*)$ has a well-separated optimizer if either (i) $\| A \|_2 \le 1-\eta$ for some $\eta>0$ free of $p$, or (ii)  the prior $\mu^*$ has a density proportional to $\exp(-V(\cdot))$ on $[-1,1]$, with $V(\cdot)$ being an even function such that $V(\cdot)$ is increasing on $[0,1]$ and $V'(\cdot)$ is increasing on $[0,1)$. 
\end{lemma}

\begin{remark}[Ubiquity of well-separated optimizers]
Definition~\ref{defn:separation_property} might appear somewhat arbitrary at first glance. In particular, one might naturally wonder if this condition is satisfied for a broad class of priors $\mu^*$ and design matrices $A$. Lemma~\ref{lemma:separation_sufficient} provides some simple sufficient conditions for the existence of well-separated optimizers. We note that part (i) of Lemma~\ref{lemma:separation_sufficient} applies to any prior supported on $[-1,1]$, and ensures the existence of well-separated optimizers as long as $\|A\|_2$ is sufficiently small. Part (ii) of Lemma~\ref{lemma:separation_sufficient} requires additional conditions on the prior $\mu^*$, but does not require any condition on the matrix $A$. 
\end{remark}

\begin{remark}[Natural versus mean parametrization] 
One might wonder why we used the natural parameter $\bgamma$ (and the function $\widetilde{M}_p(\bgamma, \mathbf{w},\mu)$) for numerical computations in section \ref{sec:numerical}, and the mean parametrized function $\mathcal{M}_p(\mathbf{u},\mathbf{w}, \mu)$ in the current section. Our choice is governed by the following consideration: the gradient of $\widetilde{M}_p(\bgamma, \mathbf{w},\mu)$ involves the function $c_\mu(\cdot,\cdot)$ and its derivative, which are easier to compute. On the other hand, the derivative of $\mathcal{M}_p(\mathbf{u},\mathbf{w}, \mu)$ involves the inverse function $h_\mu(\cdot,\cdot)$, which is considerably harder to evaluate. However, on the theoretical side, we expect the condition of well separated optimizers to be violated in most settings for the natural parametrized function $\widetilde{M}_p(\bgamma, \mathbf{w},\mu)$, whereas sufficient conditions for existence of well separated optimizers for the mean parametrized function $\mathcal{M}_p(\mathbf{u},\mathbf{w}, \mu)$ are known in the literature (as indicated above).   
\end{remark}

To describe the structure of the posterior distribution we will need the notion of Wasserstein distance, introduced below (see e.g. \citet{MR2459454}). 

\begin{definition}[Wasserstein distance]
Let $P$, $Q$ be two distributions on $\mathbb{R}^k$ (for some integer $k$) with finite first moments. Define the $\ell$-Wasserstein distance between $P$ and $Q$ as 
\begin{align}
    d_{W_\ell}(P, Q) :=  \inf_{\Pi} \mathbb{E}_{(\mathbf{X},\mathbf{Y}) \sim \Pi}[\|\mathbf{X} - \mathbf{Y}\|_\ell],\nonumber 
\end{align}
where the infimum ranges over all couplings $\Pi$ with marginals $P$ and $Q$. Here, for $\mathbf{a} = (a_1,\ldots, a_k),$ $\|\mathbf{a}\|_\ell := \Big(\sum_{i=1}^k |a_i|^\ell\Big)^{\frac{1}{\ell}}$. 
\end{definition}

We first provide some informal overview for our next result. Fix a true (unknown) prior $\mu^*$. 
Given $\widetilde{\mathbf{u}} \equiv (\widetilde{u}_1, \ldots, \widetilde{u}_p):= \mathbb{E}_{\mu^*}[\bbeta \mid \mathbf{y}]$, one can compute the tilts $\widetilde{\tau}_i$ such that $\mathbb{E}_{\mu^*_{\widetilde{\tau}_i,d_i}}[\beta_i] = \widetilde{u}_i$, and approximate the true posterior of $\bbeta$ given  $\mathbf{y}$ by $\prod_{i=1}^{p} \mu^*_{\widetilde{\tau}_i,d_i}$. However, this is not immediately feasible --- the true prior $\mu^*$ is unknown, and thus $\widetilde{\mathbf{u}}$ is unavailable to the statistician. Given the consistency result for $\hat{\mu}_{\mathrm{MF}}$ derived in Theorem~\ref{lem:approx}, the natural idea at this point is to use the proxy $\hat{\mu}_{\mathrm{MF}} $ in place of $\mu^*$. In addition, assuming $\mathcal{M}_p(\cdot, \mathbf{w},\mu^*)$ has a well-separated optimizer $\mathbf{u}^*$, one can show that $\mathbf{u}^* \approx \widetilde{\mathbf{u}}$. Thus to estimate $\widetilde{\mathbf{u}}$, the natural proxy is a (near) maximizer $\hat{\mathbf{u}}_{\mathrm{MF}}$ for $\mathcal{M}_p(\cdot, \mathbf{w},\hat{\mu}_{\mathrm{MF}})$ (observe from~\eqref{eq:Tilde-M_p} that given ${\hat \mu}_\MF$, ${M}_p(\cdot, \mathbf{w}, \hat{\mu}_{\mathrm{MF}})$ and $\widetilde{{M}}_p (\cdot, \mathbf{w}, \hat{\mu}_{\mathrm{MF}})$ have the same optimizer). To derive a completely data-driven approximation of the oracle posterior, one can now follow the recipe outlined above, with $\hat{\mu}_{\mathrm{MF}}$ and $\hat{\mathbf{u}}_\mathrm{MF}$ in place of $\mu^*$ and $\mathbf{u}^*$ respectively. 
Our next result establishes the validity of this intuitive data-driven procedure. To the best of our knowledge, this is the first result regarding the approximation of the oracle posterior using an EB method in the context of high-dimensional linear regression.

\begin{theorem}
\label{thm:inference} 
\begin{itemize}
\item[(i)] Let $\mu^*$ be a non-degenerate probability distribution on $[-1,1]$ (i.e., $\mu^*$ is not a point mass). Suppose that Assumptions~\ref{assump:On-Design}  and~\ref{assump:On-P} hold, and $\mathrm{Tr}(A^2)=o(p)$.  Let $\{\mu^{(p)} : p \geq 1\} \subset \mathcal{P}$ be a sequence of probability distributions converging weakly to $\mu^*$, as $p \to \infty$. 

Assume that $\M_p(\cdot, {\bf w}, \mu^*)$ has a well-separated optimizer $\mathbf{u}^*$. Let ${\mathbf{u}}^{(p)} := ({u}_1^{(p)}, \ldots, {u}_p^{(p)}) \in [-1,1]^p$ be any global optimizer of $\M_p(\cdot, {\bf w}, \mu^{(p)})$ on $I_{\mu^{(p)}}^p$, and let $$\widetilde{\bf u}:=\E_{\P_{\mu^*}}[\bbeta \mid \mathbf{y}]\in [-1,1]^p,\qquad \widetilde{\bm \tau} = (\widetilde{\tau}_1,\ldots, \widetilde{\tau}_p) :=h_{\mu^*,{\bf d}}(\widetilde{\bf u})\in \R^p.$$ Then the following conclusions hold, as $p\to\infty$:
\begin{enumerate}
\item[(i)] $$\frac{1}{p} \| {\mathbf{u}}^* - {\mathbf{u}}^{(p)} \|^2_2 \stackrel{\P_{\mu^*}}{\to} 0\qquad \mbox{and}\qquad    \frac{1}{p} \| {\mathbf{u}}^*-\widetilde{\mathbf{u}} \|^2_2 \stackrel{\P_{\mu^*}}{\to} 0;$$ 

\item[(ii)]   $\frac{1}{p} \dkl \left(\mathbb{P}_{\mu^*}(\cdot \mid \mathbf{y} )\| \prod_{i=1}^{p} \mu^*_{\widetilde{\tau}_i,d_i} \right) \stackrel{\mathbb{P}_{\mu^*}}{\to} 0;$

\item[(iii)]  $$ \frac{1}{p} \, d_{W_1} \left(\mathbb{P}_{\mu^*}(\cdot \mid \mathbf{y}) , \prod_{i=1}^p \mu^*_{\widetilde{\tau}_i,d_i} \right) \stackrel{\mathbb{P}_{\mu^*}}{\to} 0 \qquad \mbox{and}\qquad  \frac{1}{p}\, d_{W_1}\left(\mathbb{P}_{\mu^*}(\cdot \mid  \mathbf{y}) , \prod_{i=1}^p \mu^{(p)}_{\tau_i^{(p)},d_i}\right) \stackrel{\mathbb{P}_{\mu^*}}{\to} 0,$$ where \begin{align}\label{eq:tau}
\tau_i^{(p)} := h_{\mu^{(p)},d_i}(u_i^{(p)}), \qquad \mbox{for } \; i \in [p].
\end{align}

\end{enumerate} 
\end{itemize}
\end{theorem}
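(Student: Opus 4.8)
The plan is to prove (i)--(iii) in order, working with the true prior $\mu^*$ first and transferring the conclusions to the sequence $\mu^{(p)}$ at the very end by weak continuity (all such transfers being legitimate since $\hat\mu_{\MF}$ is consistent for $\mu^*$ by Theorem~\ref{lem:approx}). The workhorse is the exact decomposition coming from~\eqref{eq:kl_expression}: for every $\mathbf{u}\in I_{\mu^*}^p$, with $\bgamma=h_{\mu^*,\mathbf{d}}(\mathbf{u})$,
\[
\log Z_p(\mathbf{w},\mu^*)=\M_p(\mathbf{u},\mathbf{w},\mu^*)+\dkl\Big(\textstyle\prod_{i=1}^{p}\mu^*_{\gamma_i,d_i}\ \Big\|\ \P_{\mu^*}(\cdot\mid\mathbf{y})\Big),
\]
combined with Theorem~\ref{lem:approx}(a) in the form $\log Z_p(\mathbf{v},\mu^*)=\sup_{\mathbf{u}}\M_p(\mathbf{u},\mathbf{v},\mu^*)+o(p)$ \emph{uniformly} in $\mathbf{v}\in\R^p$ (the $\bgamma$-supremum in Theorem~\ref{lem:approx}(a) equals the $\mathbf{u}$-supremum after the reparametrization $\mathbf{u}=\mathbf{u}^{(\mu^*)}(\bgamma)$), and the well-separation hypothesis. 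I will also use that $\mathbf{v}\mapsto\log Z_p(\mathbf{v},\mu^*)$ is convex with gradient at $\mathbf{v}=\mathbf{w}$ equal to the posterior mean $\widetilde{\mathbf{u}}$, while $\mathbf{v}\mapsto\sup_{\mathbf{u}}\M_p(\mathbf{u},\mathbf{v},\mu^*)$ is convex (a supremum of affine functions of $\mathbf{v}$) with subgradient at $\mathbf{w}$ any near-maximizer of $\M_p(\cdot,\mathbf{w},\mu^*)$.

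For part (i): $\tfrac1p\|\mathbf{u}^*-\mathbf{u}^{(p)}\|_2^2\to0$ follows because $\mu^{(p)}\to\mu^*$ weakly with supports in $[-1,1]$ makes $c_\mu(\cdot,d)$, $h_{\mu,d}(\cdot)$, $G_{\mu,d}(\cdot)$ jointly continuous in $(\cdot,\mu)$ uniformly over the relevant bounded ranges, so $\sup_{\mathbf{u}}|\M_p(\mathbf{u},\mathbf{w},\mu^{(p)})-\M_p(\mathbf{u},\mathbf{w},\mu^*)|=o(p)$, whence $\mathbf{u}^{(p)}$ is an $o(p)$-approximate maximizer of $\M_p(\cdot,\mathbf{w},\mu^*)$ and well-separation applies. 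For $\tfrac1p\|\mathbf{u}^*-\widetilde{\mathbf{u}}\|_2^2\to0$ I compare the two convex functions above via difference quotients in the direction $\mathbf{e}=\mathrm{sign}(\widetilde{\mathbf{u}}-\mathbf{u}^*)\in\{-1,1\}^p$: for a step $h$, convexity and the uniform $o(p)$ bound give $h\langle\mathbf{e},\widetilde{\mathbf{u}}\rangle\le\log Z_p(\mathbf{w}+h\mathbf{e},\mu^*)-\log Z_p(\mathbf{w},\mu^*)\le h\langle\mathbf{e},\mathbf{u}_h\rangle+o(p)$ for a near-maximizer $\mathbf{u}_h$ of $\M_p(\cdot,\mathbf{w}+h\mathbf{e},\mu^*)$; since $|\langle\mathbf{e},\cdot\rangle|\le p$ on $[-1,1]^p$, such $\mathbf{u}_h$ is an $O(hp)$-approximate maximizer of $\M_p(\cdot,\mathbf{w},\mu^*)$, so choosing $h=h(\delta)$ according to the well-separation modulus places $\mathbf{u}_h$ within $\sqrt{p\delta}$ of $\mathbf{u}^*$ on an event of probability $\to1$ that does not depend on $\mathbf{e}$. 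Thus $\tfrac1p\langle\mathbf{e},\widetilde{\mathbf{u}}-\mathbf{u}^*\rangle\le\sqrt\delta+o(1)$ (and, using $-h\mathbf{e}$, the matching lower bound) uniformly in $\mathbf{e}$; taking $\mathbf{e}=\mathrm{sign}(\widetilde{\mathbf{u}}-\mathbf{u}^*)$ and $\delta\downarrow0$ gives $\tfrac1p\|\widetilde{\mathbf{u}}-\mathbf{u}^*\|_1\to0$, hence $\tfrac1p\|\widetilde{\mathbf{u}}-\mathbf{u}^*\|_2^2\le\tfrac2p\|\widetilde{\mathbf{u}}-\mathbf{u}^*\|_1\to0$.

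For part (ii): write $P=\P_{\mu^*}(\cdot\mid\mathbf{y})$ and $q=\prod_i\mu^*_{\widetilde{\tau}_i,d_i}$. These are mutually absolutely continuous, and $\log\tfrac{dP}{dq}(\bbeta)$ equals, up to an additive constant, $-\tfrac12\bbeta^\top A\bbeta+(\mathbf{w}-\widetilde{\bm\tau})^\top\bbeta$. Because $q$ is a product whose marginal means coincide with those of $P$, the linear part cancels in $\dkl(P\|q)+\dkl(q\|P)$, and because $A$ has zero diagonal $\mathrm{Tr}(A\,\mathrm{Cov}_q)=0$, leaving
\[
\dkl(P\|q)+\dkl(q\|P)=-\tfrac12\mathrm{Tr}(A\,\mathrm{Cov}_P).
\]
Nonnegativity of KL then gives $0\le\dkl(P\|q)\le\tfrac12|\mathrm{Tr}(A\,\mathrm{Cov}_P)|\le\tfrac12\sqrt{\mathrm{Tr}(A^2)}\,\sqrt{\mathrm{Tr}(\mathrm{Cov}_P^2)}$, so it remains to prove $\mathrm{Tr}(\mathrm{Cov}_P^2)=\|\mathrm{Cov}_P\|_F^2=O_{\P_{\mu^*}}(p)$; with $\mathrm{Tr}(A^2)=o(p)$ this yields $\tfrac1p\dkl(P\|q)\to0$. \textbf{I expect this covariance estimate to be the main obstacle.} It is exactly here that the well-separated-optimizer hypothesis is essential: two well-separated likely regions of the posterior would force $\|\mathrm{Cov}_P\|_{\mathrm{op}}$ (hence $\|\mathrm{Cov}_P\|_F^2$) to be of order $p$. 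I would obtain it by bounding $\partial_t^2\log Z_p(\mathbf{w}+t\mathbf{e},\mu^*)=\Var_P(\mathbf{e}^\top\bbeta)$ uniformly over unit $\mathbf{e}$, using Theorem~\ref{lem:approx}(a) together with the uniform strong concavity of $\M_p(\cdot,\mathbf{v},\mu^*)$ near its essentially unique optimum, in the spirit of the free-energy and cavity estimates of~\citet{mukherjee2022variational}. Some low-level bookkeeping --- keeping the relevant tilt parameters $\widetilde{\tau}_i$ in a controlled regime --- is handled via $\|\mathbf{w}\|_2=O_{\P_{\mu^*}}(\sqrt p)$ and truncation.

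For part (iii): a direct use of Pinsker is too weak (the KL is $o(p)$ but unbounded), so I use the Pythagorean-type identity $\dkl(P\|q)=\dkl(P\|\prod_iP_i)+\sum_{i=1}^p\dkl(P_i\|q_i)$ ($q$ being a product; $P_i$ the $i$-th posterior marginal, $q_i=\mu^*_{\widetilde{\tau}_i,d_i}$), so that part (ii) forces both $\dkl(P\|\prod_iP_i)=o_{\P_{\mu^*}}(p)$ and $\sum_i\dkl(P_i\|q_i)=o_{\P_{\mu^*}}(p)$. Then $d_{W_1}(P,q)\le d_{W_1}(P,\prod_iP_i)+d_{W_1}(\prod_iP_i,q)$: the first term is $\le\sqrt{2p\,\dkl(P\|\prod_iP_i)}=o(p)$ by Marton's tensorized transportation inequality for product measures on $[-1,1]^p$, and the second is $\le\sum_id_{W_1}(P_i,q_i)=o(p)$ by splitting coordinates at a threshold $\epsilon$ on $\dkl(P_i\|q_i)$ (using $d_{W_1}(P_i,q_i)\le2$ always, Pinsker below the threshold, $\sum_i\dkl(P_i\|q_i)=o(p)$, then $\epsilon\downarrow0$). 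This proves the first assertion of (iii); the second follows by the triangle inequality after bounding $d_{W_1}\big(\prod_i\mu^*_{\widetilde{\tau}_i,d_i},\prod_i\mu^{(p)}_{\tau_i^{(p)},d_i}\big)\le\sum_id_{W_1}\big(\mu^*_{\widetilde{\tau}_i,d_i},\mu^{(p)}_{\tau_i^{(p)},d_i}\big)$, which is $o(p)$ since part (i) gives $\tfrac1p\|\widetilde{\mathbf{u}}-\mathbf{u}^{(p)}\|_1\to0$ (so the two families of marginal means agree up to $o(p)$ in aggregate) and $\mu^{(p)}\to\mu^*$ weakly, whence a coordinate-wise/threshold estimate on $[-1,1]$ closes the argument.
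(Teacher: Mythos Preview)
Your proposal contains one genuinely new and correct idea, but also two real gaps.

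\textbf{Part (i), first assertion.} The claim that $\sup_{\mathbf{u}}|\M_p(\mathbf{u},\mathbf{w},\mu^{(p)})-\M_p(\mathbf{u},\mathbf{w},\mu^*)|=o(p)$ is false: the penalty $G_{\mu,d}(u)$ diverges as $u\to\partial I_\mu$, and the rate of divergence is not controlled by weak convergence of $\mu$. The paper explicitly flags this (``the functionals $\mathcal{M}_p(\cdot,\mathbf{w},\mu^*)$ and $\mathcal{M}_p(\cdot,\mathbf{w},\mu^{(p)})$ are \emph{not} pointwise close''). Its workaround: equate both sup-values to $\log Z_p(\mathbf{w},\cdot)$ via Theorem~\ref{lem:approx}(a), show $\log Z_p(\mathbf{w},\mu^{(p)})=\log Z_p(\mathbf{w},\mu^*)+o(p)$ by a separate stability lemma, and then, to pin down $\mathbf{u}^{(p)}$, truncate the tilt parameters $\tau_i^{(p)}=h_{\mu^{(p)},d_i}(u_i^{(p)})$ at level $K$ (the first-order condition $\bm\tau^{(p)}=-A\mathbf{u}^{(p)}+\mathbf{w}$ with $\|\mathbf{w}\|_2^2=O(p)$ forces all but $o(p)$ coordinates to satisfy $|\tau_i^{(p)}|\le K$). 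The comparison of $\M_p$-values is then carried out only through this bounded-tilt window, where the needed joint continuity is available.

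\textbf{Part (i), second assertion.} Your convexity argument --- perturb $\mathbf{w}\to\mathbf{w}+h\mathbf{e}$, note that the resulting maximizer $\mathbf{u}_h$ is automatically a $2hp$-approximate maximizer of $\M_p(\cdot,\mathbf{w},\mu^*)$, and apply well-separation --- is correct and self-contained. The paper takes a different route: it imports \citet[Theorem 7(ii)]{mukherjee2022variational} to get $\|\mathbf{b}-\mathbf{u}^*\|_2^2=o_{\P_{\mu^*}}(p)$ for the conditional means $b_i=\E_{\P_{\mu^*}}[\beta_i\mid\bbeta_{-i},\mathbf{y}]$, then projects down to $\widetilde{\mathbf{u}}$ via Jensen. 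Your approach is more elementary here and avoids the external input.

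\textbf{Part (ii).} Your identity $\dkl(P\|q)+\dkl(q\|P)=-\tfrac12\mathrm{Tr}(A\,\mathrm{Cov}_P)$ is correct, but you then need $\|\mathrm{Cov}_P\|_F^2=O_{\P_{\mu^*}}(p)$, which you do not prove. The sketch via ``second derivatives of $\log Z_p$ plus uniform strong concavity of $\M_p$ near its optimum'' is not justified: well-separation is much weaker than strong concavity, and no such concavity is assumed. The paper gets the needed $\mathrm{Tr}(A\,\mathrm{Cov}_P)=o(p)$ by a direct tower-property computation: since $A$ has zero diagonal, $\E_P[\bbeta^\top A\bbeta]=\E_P[\bbeta^\top A\mathbf{b}]$, and $\|\mathbf{b}-\widetilde{\mathbf{u}}\|_2=o(\sqrt p)$ then gives $\E_P[\bbeta^\top A\bbeta]=\widetilde{\mathbf{u}}^\top A\widetilde{\mathbf{u}}+o(p)$. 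Note this \emph{reuses} the vector $\mathbf{b}$ that your convexity argument bypassed in (i); if you keep your route there, you need an independent argument here.

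\textbf{Part (iii).} Your approach is the paper's (Marton plus triangle inequality), with a correct but unnecessary Pythagorean detour: Marton applied directly to $q=\prod_i\mu^*_{\widetilde\tau_i,d_i}$ already gives $d_{W_1}(P,q)\le\sqrt{2p\,\dkl(P\|q)}=o(p)$. For the second half, your sketch glosses over the same boundary issue as in (i): closeness of $\widetilde u_i,u_i^{(p)}$ does not by itself give closeness of $\widetilde\tau_i,\tau_i^{(p)}$; the paper again uses the tilt-truncation device to stay in a compact region where the map $(\mu,\tau,d)\mapsto\mu_{\tau,d}$ is uniformly continuous.
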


\begin{remark}[Estimation of posterior variance]\label{rem:Post-Var}
Theorem~\ref{thm:inference} part $(iii)$ shows that we are able to recover the one dimensional marginal distributions of the posterior distribution in Wasserstein distance, at least in an averaged sense. In particular, this means that we are able to recover both the means and variances of the posterior distribution (but not the covariances, of course). This is somewhat in contrast with the existing literature, which has reported underestimation of the variance using mean field variational inference (see for e.g.,~\citet{MR2894235},~\citet{MR4011769},~\citet{qiu2023sub} and the references therein). We stress that in all these examples, our mean field assumption ${\rm Tr}(A^2)=o(p)$ is violated, which is the main reason for this discrepancy. As an example, \citet[Section 1.3.3]{MR2894235} works with the auto-regressive model which corresponds to choosing $A$ as a tridiagonal matrix with $0$ on the diagonal  and $\lambda$ on the non-zero off-diagonals, and reports underestimation of variances.  In this case ${\rm Tr}(A^2)=2(p-1)\lambda$ which is not $o(p)$ for $\lambda$ fixed. Similarly, \citet[Section 3]{qiu2023sub} points out sub-optimality of mean field when working with a Gaussian design matrix under the assumption $n\propto p$. Again in this case ${\rm Tr}(A^2)\propto p$ (see Remark \ref{rem:np} for details). As a final example, \citet[Section 3.3]{MR4011769} points out underestimation of variances using mean field methods. However, in their setting $p$ is fixed, so the condition ${\rm Tr}(A^2)=o(p)$ is not meaningful. Also, they work with a design matrix which has a growing spectral norm, whereas we work under a bounded spectral norm assumption (see Remark \ref{rem:norm} for a discussion on the scaling for the spectral norm). To summarize, we expect mean field based inference to correctly recover the variances if the design matrix $\mathbf{X}$ has a bounded spectral norm, and the mean field condition ${\rm Tr}(A^2)=o(p)$ holds. Further, we expect these conditions to be ``almost" tight.
\end{remark}

Theorem~\ref{thm:inference} is proved in Section~\ref{sec:thm_inf}. We first collect some observations regarding Theorem \ref{thm:inference}. We state the theorem for any sequence of probability distributions $\mu^{(p)}$ converging weakly to $\mu^*$. The result carries over immediately if $\mu^{(p)} \equiv \hat{\mu}$ is a sequence of consistent estimators for $\mu^*$. We will apply the result to the consistent estimator $\hat{\mu}_{\mathrm{MF}}$.

Next, we elaborate on conclusions obtained above. Part (i) concludes that the optimizer $\mathbf{u}^{(p)}$ is approximately close to $\mathbf{u}^*$ --- this is intuitive, given that $\mu^{(p)}$ is converging weakly to $\mu^*$. However, we emphasize that the functionals $\mathcal{M}_p(\cdot, \mathbf{w},\mu^*)$ and $\mathcal{M}_p(\cdot, \mathbf{w},\mu^{(p)})$ are \emph{not} pointwise close. Our proof crucially exploits the accuracy of the NMF approximation, the stability of $ Z_p(\mathbf{w},\mu)$ (see~\eqref{eq:zp}) w.r.t.~$\mu$, and the existence of a well-separated optimizer $\mathbf{u}^*$. In addition, part (i) establishes that $\mathbf{u}^*$ is close to the vector of 1-dimensional means $\widetilde{\mathbf{u}}$. Part (ii) provides an approximation of the posterior $\mathbb{P}_{\mu^*}(\cdot \mid  \mathbf{y})$ in KL-divergence in terms of quadratic tilts of the true prior $\mu^*$ --- note that this is distinct from the NMF approximation, due to the reverse nature of the KL divergence employed. This intermediate result is crucial for establishing part (iii), which provides the desired approximation to the posterior distribution in terms of observable quantities. Finally, we note that part (iii) presents two alternative approximations to the true posterior --- the first is an oracle product approximation, and involves $\mu^*$ and the tilt parameter $\widetilde{\boldsymbol{\tau}}$. We collect this version as it provides significant  insight on the structure of the posterior distribution. The second version, involving $\mu^{(p)}$ and $\boldsymbol{\tau}^{(p)} = (\tau_1^{(p)},\ldots, \tau_p^{(p)})$ is actually one that is useful from a practical perspective. Note that this data-driven product approximation, formulated in terms of $\mu^{(p)}$ and the tilt parameters $\boldsymbol{\tau}^{(p)}$, formalizes the intuition introduced in \eqref{eq:posterior_intuition}.

In the following corollary we investigate  statistical applications of Theorem~\ref{thm:inference}.
This will facilitate downstream Bayesian inference, upon combination with the consistency of $\hat{\mu}_{\mathrm{MF}}$ established in Theorem \ref{lem:approx}. The proof of the corollary is given in Section~\ref{sec:thm_inf}.

\begin{corollary}
\label{cor:takeaways} 
Suppose we are in the setting of Theorem \ref{thm:inference}.
Then we have the following conclusions.
\begin{itemize}
\item[(i)] 
\label{item2_inference} 
Let $\{f_{p,i}: 1\leq i \leq p\}$ be 1-Lipschitz functions on $[-1,1]^{L}$, for any positive integer $L$, i.e., $$|f_{p,i}(s_1,\ldots, s_L) - f_{p,i}(t_1,\ldots, t_L)| \le \sum_{i=1}^L |s_i - t_i|, \qquad \mbox{and} \qquad \|f_{p,i} \|_{\infty} \leq 1.$$ 
For any $\varepsilon>0$, as $p \to \infty$,  we have
\begin{align}
    \mathbb{P}_{\mu^*}\bigg( \Big|  \frac{1}{p} \sum_{i=1}^{p} f_{p,i}(\beta_i^{(1)}, \cdots, \beta_i^{(L)}) - \frac{1}{p} \sum_{i=1}^p \mathbb{E}_{\mu^{(p)}_{\tau_i^{(p)},d_i}}[f_{p,i}(\zeta^{(1)}_i,\cdots, \zeta^{(L)}_i)] \Big|  > \varepsilon  \;\Big | \;  \mathbf{y} \bigg)  \stackrel{\mathbb{P}_{\mu^*}}{\to} 0. \nonumber 
\end{align}
In the display above, $\bbeta^{(1)}, \cdots, \bbeta^{(L)}$ are i.i.d.~samples from the posterior $\mathbb{P}_{\mu^*}(\cdot \mid \mathbf{y})$ and $\zeta^{(1)}_i,\cdots, \zeta^{(L)}_i$ are i.i.d.~samples from $\mu^{(p)}_{\tau_i^{(p)},d_i}$.

\item[(ii)] For $i\in [p]$ and $\varepsilon>0$, let $$\mathcal{I}_i^{\varepsilon}:= (q_i^{(\alpha/2)} - \varepsilon, q_i^{(1-\alpha/2)} + \varepsilon ),$$ where $q_i^{(\alpha/2)}$ and $q_i^{(1-\alpha/2)}$ are the $\alpha/2$ and $1-\alpha/2$ quantiles of $\mu^{(p)}_{\tau_i^{(p)},d_i}$. 
Then we have
\begin{align}
\mathbb{P}_{\mu^*} \bigg( \frac{1}{p} \sum_{i=1}^{p} \mathbf{1}(\beta_i \in \mathcal{I}_i^{\varepsilon}) \geq (1-\alpha - \varepsilon) \; \Big |\;\mathbf{y}\bigg) \stackrel{\mathbb{P}_{\mu^*}}{\to} 1. \nonumber 
\end{align} 

\item[(iii)] For $i\in [p]$, let $\hat{\beta}_i := \mathbb{E}_{\mu^{(p)}_{\tau_i^{(p)},d_i}}[\beta_i]$ be an estimate of the true posterior mean $\E_{\P_{\mu^*}} [\beta_i \mid {\bm y}]$.  Then, $\hat{\bbeta} := (\hat{\beta}_1, \cdots, \hat{\beta}_p)$ is asymptotically Bayes optimal under the $L^2$ loss, i.e., $$\frac{1}{p}\E_{\P_{\mu^*}}  \Big[\|{\bm \beta} - \E_{\P_{\mu^*}} [{\bm \beta} \mid {\bf y}] \|_2^2 \Big] - \frac{1}{p}\E_{\P_{\mu^*}}  \Big[\|{\bm \beta} - \hat{\bbeta}\|_2^2 \Big] \to 0.$$
\end{itemize} 
\end{corollary}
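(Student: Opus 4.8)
\textbf{Proof proposal for Corollary~\ref{cor:takeaways}.}
The plan is to derive all three parts from Theorem~\ref{thm:inference}, essentially reducing everything to the Wasserstein bound in part (iii) of that theorem together with the KL bound in part (ii). The unifying observation is that $d_{W_1}$ between two product measures on $[-1,1]^p$ controls, up to $o(p)$, averages of $1$-Lipschitz test functions of the coordinates. First I would prove (i): let $\bar{\nu} := \prod_{i=1}^p \mu^{(p)}_{\tau_i^{(p)},d_i}$ and $P := \mathbb{P}_{\mu^*}(\cdot\mid\mathbf{y})$. Given i.i.d.\ draws $\bbeta^{(1)},\dots,\bbeta^{(L)}$ from $P$ and $\zeta^{(1)},\dots,\zeta^{(L)}$ from $\bar{\nu}$, I would couple the two product-of-$L$-copies measures coordinatewise via the optimal coupling of $P$ and $\bar{\nu}$ (applied $L$ times independently). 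For each $i$, $|f_{p,i}(\beta_i^{(1)},\dots,\beta_i^{(L)}) - f_{p,i}(\zeta_i^{(1)},\dots,\zeta_i^{(L)})| \le \sum_{\ell=1}^L |\beta_i^{(\ell)} - \zeta_i^{(\ell)}|$ by the Lipschitz hypothesis, so averaging over $i$ and taking expectations under the coupling gives a bound of $L \cdot p^{-1} d_{W_1}(P,\bar{\nu})$ on the $L^1$ distance (conditionally on $\mathbf{y}$) between the two sample-average functionals; by Theorem~\ref{thm:inference}(iii) this is $o_{\mathbb{P}_{\mu^*}}(1)$. Then Markov's inequality conditionally on $\mathbf{y}$, followed by taking expectations over $\mathbf{y}$ and using boundedness, upgrades this to the stated in-probability statement. (One must be slightly careful that convergence in probability of a conditional quantity over randomness of $\mathbf{y}$ is what is claimed, but bounded convergence handles the passage.)

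For part (ii) I would apply part (i) with $L=1$ and a suitable Lipschitz approximation of the indicator. The function $\mathbf{1}(t\in(q_i^{(\alpha/2)},q_i^{(1-\alpha/2)}))$ is not Lipschitz, so I would pick the piecewise-linear function $f_{p,i}$ that equals $1$ on $[q_i^{(\alpha/2)}+\varepsilon', q_i^{(1-\alpha/2)}-\varepsilon']$, equals $0$ outside $(q_i^{(\alpha/2)},q_i^{(1-\alpha/2)})$, and interpolates linearly, with slope bounded by $1/\varepsilon'$ for a fixed small $\varepsilon'\le\varepsilon$ — rescaling if necessary so that the $1$-Lipschitz normalization of part (i) applies (equivalently, apply part (i) to $\varepsilon' f_{p,i}$). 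Then $\mathbf{1}(\beta_i\in\mathcal{I}_i^{\varepsilon}) \ge f_{p,i}(\beta_i)$ pointwise, while $\mathbb{E}_{\mu^{(p)}_{\tau_i^{(p)},d_i}}[f_{p,i}] \ge \mathbb{E}_{\mu^{(p)}_{\tau_i^{(p)},d_i}}[\mathbf{1}(\beta_i\in(q_i^{(\alpha/2)}+\varepsilon', q_i^{(1-\alpha/2)}-\varepsilon'))] \ge 1-\alpha$ minus a term controlled by the quantile definition (the mass of $\mu^{(p)}_{\tau_i^{(p)},d_i}$ in the two $\varepsilon'$-shrunk tails is at most $\alpha$ by definition of the quantiles, and one can absorb the possible atom effects into the $\varepsilon$-slack). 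Combining, $p^{-1}\sum_i \mathbf{1}(\beta_i\in\mathcal{I}_i^{\varepsilon}) \ge p^{-1}\sum_i \mathbb{E}[f_{p,i}] - o_{\mathbb{P}_{\mu^*}}(1) \ge 1-\alpha-\varepsilon$ on an event of probability tending to one.

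For part (iii), write $\widetilde{\beta}_i := \mathbb{E}_{\mathbb{P}_{\mu^*}}[\beta_i\mid\mathbf{y}]$, the true posterior mean, so $\hat{\beta}_i = \mathbb{E}_{\mu^{(p)}_{\tau_i^{(p)},d_i}}[\beta_i]$. The bias–variance decomposition gives $\mathbb{E}_{\mathbb{P}_{\mu^*}}[\|\bbeta - \hat{\bbeta}\|_2^2\mid\mathbf{y}] - \mathbb{E}_{\mathbb{P}_{\mu^*}}[\|\bbeta-\widetilde{\bbeta}\|_2^2\mid\mathbf{y}] = \|\hat{\bbeta}-\widetilde{\bbeta}\|_2^2$, since $\widetilde{\bbeta}$ is the conditional mean. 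Hence it suffices to show $p^{-1}\|\hat{\bbeta}-\widetilde{\bbeta}\|_2^2 \to 0$ in $\mathbb{P}_{\mu^*}$-probability and then take expectations (boundedness of all quantities on $[-1,1]^p$ makes the passage from conditional to unconditional and from $L^1$ to the difference of risks routine via bounded convergence). Now $|\hat\beta_i - \widetilde\beta_i| \le 2$, so $p^{-1}\|\hat{\bbeta}-\widetilde{\bbeta}\|_2^2 \le 2 p^{-1}\sum_i |\hat\beta_i - \widetilde\beta_i| = 2 p^{-1}\sum_i |\mathbb{E}_{\mu^{(p)}_{\tau_i^{(p)},d_i}}[\beta_i] - \mathbb{E}_{P_i}[\beta_i]|$ where $P_i$ is the $i$-th marginal of $P$; each summand is at most $d_{W_1}$ of the $i$-th marginals, and $\sum_i d_{W_1}(P_i,\mu^{(p)}_{\tau_i^{(p)},d_i}) \le d_{W_1}(P,\bar\nu)$ (marginals of a coupling are couplings of marginals), so $p^{-1}\|\hat{\bbeta}-\widetilde{\bbeta}\|_2^2 \le 2 p^{-1} d_{W_1}(P,\bar\nu) = o_{\mathbb{P}_{\mu^*}}(1)$ by Theorem~\ref{thm:inference}(iii). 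I expect the main obstacle to be purely bookkeeping: carefully handling the conditional-vs-unconditional modes of convergence (the corollary's conclusions are stated as $\stackrel{\mathbb{P}_{\mu^*}}{\to}$ for quantities that are themselves functions of $\mathbf{y}$ through conditional probabilities/expectations), and in part (ii) controlling the quantile/atom slack of the tilted measure $\mu^{(p)}_{\tau_i^{(p)},d_i}$ so that the Lipschitz surrogate genuinely loses only $O(\varepsilon)$ — both are routine but require care rather than new ideas.
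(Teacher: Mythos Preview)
Your overall strategy is sound, but there are two points worth noting.

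\textbf{Part (i): a small gap.} Your coupling argument controls the $L^1$ distance between $\frac{1}{p}\sum_i f_{p,i}(\beta_i^{(1)},\dots,\beta_i^{(L)})$ and $\frac{1}{p}\sum_i f_{p,i}(\zeta_i^{(1)},\dots,\zeta_i^{(L)})$, but the target in the corollary is the \emph{expectation} $\frac{1}{p}\sum_i \mathbb{E}[f_{p,i}(\zeta_i^{(1)},\dots,\zeta_i^{(L)})]$, not the random sample version. You still need to pass from the random $\zeta$-average to its mean; the paper does this explicitly via the bounded difference inequality (the $\zeta_i$'s are independent under $\bar\nu$, so the average has variance $O(1/p)$). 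This is trivial to add, but as written the argument is incomplete.

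\textbf{Part (ii): choice of Lipschitz surrogate.} Your choice of $f_{p,i}$ (equal to $1$ on the $\varepsilon'$-shrunk interval, $0$ outside the quantile interval) forces you to control the mass of $\mu^{(p)}_{\tau_i^{(p)},d_i}$ in the $\varepsilon'$-neighborhoods of the quantiles, uniformly in $i$, which is not automatic. The paper sidesteps this entirely by taking $f_{p,j}$ to be $1$ on $\bar{\mathcal{I}}_j$ and $0$ outside $\mathcal{I}_j^\varepsilon$: then $\mathbb{E}[f_{p,j}] \ge \mu^{(p)}_{\tau_j^{(p)},d_j}(\mathcal{I}_j) \ge 1-\alpha$ holds directly from the quantile definition, with no slack to absorb. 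Your version can be patched, but the paper's choice is cleaner.

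\textbf{Part (iii): a genuinely different and more direct argument.} Here you and the paper diverge. The paper applies part (i) twice --- once with $f_{p,i}(\beta_i) = (\beta_i - \hat\beta_i)^2/2$ and once with $L=2$ on two independent posterior draws --- to show both the $\hat\bbeta$-risk and the MMSE converge to the same quantity $\frac{1}{p}\sum_j \mathrm{Var}_{\mu^{(p)}_{\tau_j^{(p)},d_j}}(\beta_j)$. Your route is more elementary: the Pythagorean identity $\mathbb{E}[\|\bbeta - \hat\bbeta\|_2^2\mid\mathbf{y}] - \mathbb{E}[\|\bbeta - \widetilde\bbeta\|_2^2\mid\mathbf{y}] = \|\hat\bbeta - \widetilde\bbeta\|_2^2$ reduces everything to $p^{-1}\|\hat\bbeta - \widetilde\bbeta\|_2^2 \to 0$, which follows from $\sum_i d_{W_1}(P_i,\bar\nu_i) \le d_{W_1}(P,\bar\nu)$ and Theorem~\ref{thm:inference}(iii). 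This is correct and avoids invoking part (i) altogether; it is arguably the preferable proof.
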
 

\begin{remark}[On the well-separated optimizer assumption in Theorem~\ref{thm:inference}]
\label{rem:well_separated_req}
The reader might naturally wonder about the necessity of the well-separated optimizer assumption in Theorem \ref{thm:inference}. As established in Theorem \ref{thm:inference}, in this setting, the true posterior can be approximated (in KL and Wasserstein sense) by a product measure. In fact, it is well-understood that even when the mean-field approximation is correct (i.e., Theorem~\ref{lem:approx} part (a) holds), if the well-separated optimizer condition is violated, the true posterior can actually behave like a \emph{mixture} of product distributions, rather than a simple product distribution. The most well-understood example in this regard is the Curie-Weiss Ising model for Markov Random Fields. This model is studied extensively in statistical physics as a model for ferromagnetism and in statistics/machine learning as a simple model for dependent data. It is well understood that at low temperatures, the associated Gibbs distribution can be approximated as a mixture of two product distributions. We refer the interested reader to~\citet{MR3752129} and~\citet{MR0503332} for an in-depth discussion of related models. It would be interesting to explore statistical methodology in settings when the well-separated optimizer condition does not hold, and instead the posterior is approximated by a mixture of product measures. 
We hope to explore this further in follow up investigations. 
\end{remark}
\section{Discussion}
\label{sec:discussion}
In the following we discuss some limitations of the current approach, and collect some directions for future enquiry. We also discuss some additional indirect connections with current research. 
\begin{itemize}

\item[(1)] \textbf{Extensions}: 
One of the main limitations of our current analysis is the bounded prior assumption (Assumption \ref{assump:On-P}). Unbounded priors are of natural interest in this problem, e.g.,~\citet{kim2022flexible} propose to use scale mixture of Gaussians for the parameter space $\mathcal{P}$. The use of heavy-tailed spike and slab priors are also ubiquitous in modern Bayesian variable selection \citep{george1997approaches}. We believe that with significant technical work, it should be possible to extend our results to cover unbounded priors with \emph{very} light tails (e.g.,~faster than sub-Gaussian). However, extending these ideas to cover truly unbounded priors (e.g., with only bounded second moment) should require deep new ideas, and is significantly beyond the scope of this work.  

We establish the asymptotic consistency of the NPMLE ${\hat \mu}_\ML$ and its mean field surrogate ${\hat \mu}_\MF$ in Theorems~\ref{thm:ml_rate} and~\ref{lem:approx}, respectively. One potential criticism of these results is that the convergence rates are probably sub-optimal. We emphasize that this is (to the best of our knowledge) the very first non-asymptotic rates for the NPMLE and the mean-field surrogate in a high-dimensional regression setting. A characterization of the optimal convergence rates for these estimators is the next natural step---we expect that such a characterization will likely require the development of several new ideas, and leave this as an open question.

Theorem~\ref{thm:inference}-(i) implies that under the existence of a well-separated optimizer $\mathbf{u}^*$, we have $\| \mathbf{u}^{(p)} - \widetilde{\mathbf{u}}\|^2 = o(p)$ under $\mathbb{P}_{\mu^*}$. In words, this implies the consistency of the empirical Bayes estimator for the means. For the Gaussian sequence model, \citet{Jiang-Zhang-2009} establish nearly parametric rates of convergence for the empirical Bayes estimator. This proof technique is not directly applicable in our setting due to the dependence between the marginals in the regression setting. Establishing analogous non-asymptotic convergence rates for the mean estimator $\mathbf{u}^{(p)}$ is an interesting open question. We believe this will require new ideas significantly beyond the present paper, and leave this for future work.

Finally, we have assumed throughout the paper that the noise variance $\sigma^2$ is known to the statistician. If $\sigma^2$ is instead unknown, the prior $\mu$ might not be identifiable. This arises even in the context of the Gaussian sequence model (where $n=p$ and $\mathbf{X} = I_n$); in this case if $\mathcal{P}$ is the class of all normal distributions with mean 0 and variance $A >0$, the $y_i$'s are i.i.d.~$N(0, A + \sigma^2)$, and it is not possible to estimate $A$ and $\sigma^2$ separately based on the $y_i$'s.

    \item[(2)] \textbf{CAVI}: The EB methodology introduced in this paper relies crucially on the numerical optimization of the function $\widetilde{M}_p({\bm \gamma}, \mathbf{w} ,\nu({\bf p}))$ (recall e.g.,~\eqref{eq:Opt-M_p}). In our implementation (described in Section \ref{sec:numerical}), we adopt an alternating update strategy, where we iteratively update $\bgamma$ (with $\mathbf{p}$ held constant) and then update $\mathbf{p}$ (with $\bgamma$ held constant). We note that \emph{Coordinate Ascent Variational Inference} (CAVI) provides a natural alternative to optimize  $\widetilde{M}_p(\bgamma, \mathbf{w}, \nu(\mathbf{p}))$ in $\bgamma$ (for a fixed $\mathbf{p}$) (see~\citet{Blei-Et-Al-VI} and the references therein). CAVI is an intuitive iterative procedure --- it cycles over the coordinates of $\bgamma$, and optimizes the function $\widetilde{M}_p(\bgamma, \mathbf{w}, \nu(\mathbf{p}))$ in the chosen coordinate of $\bgamma$ when the others are held fixed. The single coordinate updates are usually very fast, which leads to the popularity of CAVI in many applications. Our BFGS based approach and CAVI are both generally guaranteed to converge to local optima, rather than to a global optimizer of $\widetilde{M}_p(\cdot, \mathbf{w} ,\nu({\bf p}))$. We expect that given a reasonable initialization, the two approaches should have comparable performance in our setting. We believe that an in-depth study and comparison of diverse optimization algorithms (including the aforementioned ones) for the optimization problem \eqref{eq:Opt-M_p} is an important direction for future research. 

    \item[(3)] \textbf{Settings beyond NMF}: The ELBO \eqref{eq:elbo} is crucial in our analysis. In fact, we focus exclusively on settings where the lower bound in~\eqref{eq:elbo} is tight up to $o(p)$ errors. This prompts two natural questions: (a) when is the lower bound in~\eqref{eq:elbo} \emph{not} tight (i.e.,~there exists a $\Theta(p)$ gap between $\log Z_p(\cdot,\cdot)$ and the NMF approximation)? And (b) what can one do in such settings?

    It seems challenging to derive a general answer to the above two questions. However, one can provide specific examples where \eqref{eq:elbo} is not tight (thus answering question (a) above) and in some of these instances, one can hope to go beyond the NMF based approximation scheme outlined in this paper (thus providing a partial answer to question (b) above). A prominent example in this regard is a linear model with i.i.d.~Gaussian design (every entry of the design has mean zero and variance $1/n$) under a proportional asymptotic regime, i.e.,~the number of features $p$ and data points $n$ both diverge to infinity such that $p/n \to \kappa \in (0,\infty)$. Deep but non-rigorous heuristics from spin glass theory \citep{MR1026102,MR2518205} suggest that the lower bound \eqref{eq:elbo} is not tight in this regime. Instead, the Thouless-Anderson-Palmer (TAP) approximation --- an alternative mean field approximation derived from the Bethe approximation \citep{wainwright2008graphical} --- is conjectured to yield a tight estimate to $\log Z_p$ (see e.g.,~\citet{krzakala2014variational} for a precise statement). 
    
    The third author, in joint work with Jiaze Qiu,  studied the linear model with i.i.d.~Gaussian design in a proportional asymptotic regime and established the TAP approximation to $\log Z_p$ under a uniform spherical prior (under additional \emph{low} Signal-to-Noise (SNR) assumptions); see~\citet{MR4612652}. However, this approximation relies crucially on the symmetry of the uniform spherical prior, and exploits estimates from random matrix theory which are no longer applicable under the product prior setting studied in this paper. The TAP approximation was recently extended to the product prior setting in \citet{celentano2023mean}.
    Given the accuracy of this approximation, another natural direction for future enquiry concerns the performance of EB procedures based on the aforementioned TAP approximation.      

    We note that the accuracy of the TAP approximation to the marginal density has been rigorously established recently for the low rank matrix recovery problem \citet{MR4203332},~\citet{celentano2022sudakov},~\citet{MR4600991}. Building on this approximation and related ideas, \citet{zhong2022empirical} develops EB methods to estimate the prior on the spike entries in the low rank matrix estimation problem.

    \item[(4)] \textbf{The condition $n \geq p$}: Our theoretical results are derived under Assumption \ref{assump:On-Design}. In particular, as discussed in Remark~\ref{eg:eigen}, this assumption necessitates that $n \geq p$. In light of this requirement (and in view of Example~\ref{eg:eigen}), the reader might end up concluding that the EB approach outlined in this paper is of limited utility in modern big data applications, where the number of features $p$ can often be much larger than the sample size $n$. However, we believe that such a conclusion can be overly pessimistic. In fact, if one imposes strong additional structure on the design $\mathbf{X}$, e.g.,~the rows are i.i.d.~from a Gaussian distribution, consistent estimation of $\mu$ might be statistically possible, even if $p \geq n$. A precise characterization of the design properties which enable consistent prior recovery even if $p \geq n$ is beyond our current paper. 
    We leave this for future work.  
\end{itemize}
\noindent
\textbf{Acknowledgments:} The authors thank Zhou Fan, Pragya Sur, and Yihong Wu for helpful discussions at various stages of this work. They also thank the anonymous referees for constructive comments that substantially improved the manuscript, and Samprit Banerjee for assistance with the implementation of the NMF-EB approach.

\section{Proof of Theorem \ref{thm:ml_rate} and Theorem \ref{lem:approx}}\label{sec:Proofs}

\subsection{Reduction to sequence model}\label{sec:Reduction}

In the following lemma we first reduce the problem of inference for~\eqref{eq:lin_reg} and~\eqref{eq:iid-Beta} by sufficiency.
\begin{definition}[Correlated sequence model]
Consider the model
\begin{align}\label{eq:reg_z}
{\bf z}={\bm \beta}+\Sigma^{1/2}{\bm \varepsilon},
\end{align}
where $${\bf z}:=({\mathbf X}^\top {\mathbf X})^{-1}{\mathbf X}^\top {\bf y}\in \R^p,\qquad \Sigma:=\sigma^2 ({\mathbf X}^\top{\mathbf X})^{-1}\in \R^{p\times p},\qquad{\bm  \varepsilon}\sim N({\bf 0}, {\bf I}_p).$$
Note that $\mathbf{z}$ is well-defined, as ${\mathbf X}$ has full column rank (which follows from Assumption~\ref{assump:On-Design}; see~\eqref{eq:eigen}), and hence ${\mathbf X}^\top {\mathbf X}$ is invertible. Then ${\bf z}\sim N({\bm \beta}, \Sigma)$. Also, Assumption~\ref{assump:On-Design} (i.e.~\eqref{eq:eigen}) translates to
\begin{align}\label{eq:eigen2}
\frac{1}{C_2}\le \lambda_{\rm min}(\Sigma)\le  \lambda_{\rm max}(\Sigma)\le \frac{1}{C_1}.
\end{align}
Let $\widetilde{m}_\mu(\cdot)$ denote the marginal likelihood of ${\bf z}$, given by
\begin{equation}\label{eq:marg-z}
\widetilde{m}_\mu({\bf z}) :=(2\pi)^{-p/2}|\Sigma|^{-1/2}\int \exp\Big[-\frac{1}{2}({\bf z}-{\bm \theta})^\top \Sigma^{-1}({\bf z}-{\bm \theta})\Big]d\mu^{\otimes p}(\bm\theta).
\end{equation}

\end{definition}
The following proposition shows that the NPMLE of the above model is the same as the NPMLE of the original model \eqref{eq:lin_reg}. Its proof is straight forward, and hence omitted.

\begin{lemma}\label{lem:suff}
Using the above notation, $m_\mu({\bf y})\propto \widetilde{m}_\mu({\bf z})$, and consequently $\arg \sup_{\mu\in \mathcal{P}} m_\mu({\bf y})=\arg \sup_{\mu\in \mathcal{P}}  \widetilde{m}_\mu({\bf z})$. Here the constant of proportionality is independent of $\mu$.
\end{lemma}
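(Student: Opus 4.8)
The plan is to read Lemma~\ref{lem:suff} as a sufficiency reduction: in model~\eqref{eq:lin_reg} the ordinary least squares statistic ${\bf z}=({\mathbf X}^\top{\mathbf X})^{-1}{\mathbf X}^\top{\bf y}$ (equivalently ${\mathbf X}^\top{\bf y}$) is sufficient for ${\bm \beta}$, so the marginal likelihood of ${\bf y}$ should split into a factor not involving ${\bm \beta}$ --- hence not $\mu$ --- times the marginal likelihood of ${\bf z}$. The one substantive computation is an orthogonal (Pythagorean) decomposition of the residual sum of squares. By Assumption~\ref{assump:On-Design}, ${\mathbf X}^\top{\mathbf X}$ is invertible, so ${\mathbf X}{\bf z}={\mathbf X}({\mathbf X}^\top{\mathbf X})^{-1}{\mathbf X}^\top{\bf y}$ is the orthogonal projection of ${\bf y}$ onto the column space of ${\mathbf X}$; thus $({\bf y}-{\mathbf X}{\bf z})^\top{\mathbf X}=\mathbf 0$, and for every ${\bm \beta}\in\R^p$,
\[
\|{\bf y}-{\mathbf X}{\bm \beta}\|_2^2=\|{\bf y}-{\mathbf X}{\bf z}\|_2^2+({\bf z}-{\bm \beta})^\top{\mathbf X}^\top{\mathbf X}({\bf z}-{\bm \beta})=\|{\bf y}-{\mathbf X}{\bf z}\|_2^2+\sigma^2({\bf z}-{\bm \beta})^\top\Sigma^{-1}({\bf z}-{\bm \beta}),
\]
where the last equality uses $\Sigma^{-1}=\sigma^{-2}{\mathbf X}^\top{\mathbf X}$.

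Next I would substitute this identity into the integral~\eqref{eq:marginal} for $m_\mu({\bf y})$. Since $\exp(-\tfrac{1}{2\sigma^2}\|{\bf y}-{\mathbf X}{\bf z}\|_2^2)$ is free of ${\bm \beta}$, it factors out of the integral, giving
\[
m_\mu({\bf y})=\Big(\tfrac{1}{2\pi\sigma^2}\Big)^{n/2}\exp\!\Big(-\tfrac{1}{2\sigma^2}\|{\bf y}-{\mathbf X}{\bf z}\|_2^2\Big)\int\exp\!\Big(-\tfrac12({\bf z}-{\bm \theta})^\top\Sigma^{-1}({\bf z}-{\bm \theta})\Big)\,d\mu^{\otimes p}({\bm \theta}).
\]
Comparing with the definition~\eqref{eq:marg-z} of $\widetilde m_\mu({\bf z})$, the remaining integral equals $(2\pi)^{p/2}|\Sigma|^{1/2}\,\widetilde m_\mu({\bf z})$, so that $m_\mu({\bf y})=c({\bf y},\Sigma)\,\widetilde m_\mu({\bf z})$ with the explicit positive constant $c({\bf y},\Sigma)=(2\pi\sigma^2)^{-n/2}(2\pi)^{p/2}|\Sigma|^{1/2}\exp(-\tfrac{1}{2\sigma^2}\|{\bf y}-{\mathbf X}{\bf z}\|_2^2)$, which does not depend on $\mu$. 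The argmax claim is then immediate: multiplying a function of $\mu$ by a positive $\mu$-free constant does not change its set of maximizers over $\mathcal P$, hence $\arg\sup_{\mu\in\mathcal P}m_\mu({\bf y})=\arg\sup_{\mu\in\mathcal P}\widetilde m_\mu({\bf z})$.

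I do not anticipate any real obstacle; the statement is bookkeeping. The only points needing care are (a) using Assumption~\ref{assump:On-Design} so that ${\mathbf X}^\top{\mathbf X}$ is invertible and ${\bf z}$, together with the projection identity, is well-defined, and (b) observing that although $c({\bf y},\Sigma)$ depends on ${\bf y}$ through $\|{\bf y}-{\mathbf X}{\bf z}\|_2^2$, it is free of $\mu$, which is all the argmax conclusion requires (this is the sense in which the proportionality constant is ``free of $\mu$''). Alternatively, the factorization is an instance of the Fisher--Neyman factorization theorem applied to the sufficient statistic ${\mathbf X}^\top{\bf y}$, but the direct computation above is self-contained and shorter.
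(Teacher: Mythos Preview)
Your proof is correct and is precisely the standard sufficiency/Pythagorean-decomposition argument one would expect; the paper itself deems the lemma ``straightforward'' and omits the proof entirely, so there is nothing to compare against. Your observation in point~(b) that the proportionality constant also depends on $\mathbf{y}$ (through the residual norm $\|\mathbf{y}-\mathbf{X}\mathbf{z}\|_2^2$), not literally ``only on $\Sigma$'' as the lemma states, is a fair reading --- the operative claim is that the constant is free of $\mu$, which is all that is needed for the argmax conclusion and for the subsequent uses in the paper.
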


Henceforth we will work with model~\eqref{eq:reg_z}.  
Throughout the rest of the paper, we fix $\upsilon^2=(2C_2)^{-1}\in (0, C_2^{-1})$. Note that \eqref{eq:eigen2} implies that the matrix $\bar{\Sigma}:=\Sigma-\upsilon^2{\bf I}$ is positive definite, as
\begin{align}\label{eq:eigen33}
\frac{1}{2C_2} \le \lambda_{\min}(\bar{\Sigma})\le  \lambda_{\max}(\bar{\Sigma})\le \frac{1}{C_1}.
\end{align}
Consequently
we can decompose $\Sigma^{1/2} {\bm \varepsilon} = {\bm \varepsilon_1 } + {\bm \varepsilon_2}$, where ${\bm \varepsilon}_1 \sim N(0, \upsilon^2 \mathbf{I})$ and ${\bm \varepsilon}_2 =\Sigma^{1/2} {\bm \varepsilon} - {\bm \varepsilon}_1 \sim N(0, \Sigma - \upsilon^2 \mathbf{I})$ are independent. Plugging this back into \eqref{eq:reg_z}, we have, 
\begin{align}
\label{eq:modified_sequence}
    \mathbf{z} = \boldsymbol{\beta} + \Sigma^{1/2} {\bm {\varepsilon} } := \bar{\boldsymbol{\beta}} + {\bm \varepsilon}_2 
\end{align}
where $\bar{\beta}_i = \beta_i + \varepsilon_{1,i} \sim \mu \star N(0, \upsilon^2)$ are i.i.d., and ${\bm \varepsilon}_2 \sim N(0, \bar{\Sigma} )$ (here $\bar{\Sigma} = \Sigma - \upsilon^2 \mathbf{I}$)  are independent. 
We will switch to this representation in some of  our proofs.

 \subsection{Auxiliary lemmas for Theorem \ref{thm:ml_rate} and Theorem \ref{lem:approx}}
 We will now state two supporting lemmas, which will be used to verify Theorem \ref{thm:ml_rate} and Theorem \ref{lem:approx}. Their proofs are deferred to Appendix \ref{Appendix-A}. For stating the lemmas, we require some notation, which we introduce below. With $\widetilde{m}_{\mu,\Sigma}(\cdot)$ denoting the joint density of ${\bf z}$ w.r.t.~Lebesgue measure, a direct calculation gives
\begin{align}\label{eq:gg}
\log \widetilde{m}_{\mu,\Sigma}({\bf v})
 =\log \Big[(2\pi)^{-p/2}|\Sigma|^{-1/2}\Big]+\widetilde{g}({\bf v},\mu, \Sigma),
 \end{align}
 where
 \begin{align}\label{eq:gh}
\widetilde{g}({\bf v},\mu, \Sigma):= \log \int e^{-\widetilde{h}({\bf v}, {\bm \theta},\Sigma)}d\mu^{\otimes p}({\bm \theta}),\qquad 
\widetilde{h}({\bf v},{\bm \theta}, \Sigma):=\frac{1}{2}({\bf v}-{\bm \theta})^\top \Sigma^{-1}({\bf v}-{\bm \theta}).
 \end{align}
Thus, the marginal log-density \(\log\widetilde m_{\mu,\Sigma}({\bf v})\)
decomposes into a Gaussian normalizing constant, which is independent of
\(\mu\), and the prior-dependent log-partition term
\(\widetilde g({\bf v},\mu,\Sigma)\). Since the former does not affect
optimization over \(\mu\), the term \(\widetilde g({\bf v},\mu,\Sigma)\)
will be the object of primary interest in what follows.
 
 Note that the posterior distribution of ${\bm \beta}$ given ${\bf z}$, denoted henceforth by $\widetilde{H}({\bf z},\mu,\Sigma)$, 
 is given by
\begin{align}\label{eq:post1}
 \frac{d\widetilde{H}({\bf z},\mu,\Sigma)}{d\mu^{\otimes p}}({\bm \theta})= e^{-\widetilde{h}({\bf z}, {\bm \theta},\Sigma)-\widetilde{g}({\bf z},\mu,\Sigma)}.
 \end{align}

Our first lemma gives a stability result for the log-partition term $\widetilde{g}(\cdot,\cdot)$ in terms of the prior distribution $\mu$. We will derive the stability in terms of a new pseudo-metric defined below. 

\begin{definition}\label{def:metric}
 For any probability measure  $\mu$ on $[-1,1]$, let $\Psi_{\mu,\upsilon^2}:\R\mapsto (0,\infty)$ denote the density of $\mu \star N(0, \upsilon^2)$, given by
    \begin{align}
    \label{eq:Upsilon}
        \Psi_{\mu,\upsilon^2}(\theta) :=\int_{[-1,1]} \frac{1}{\sqrt{2\pi \upsilon^2}}e^{-\frac{(\theta-\beta)^2}{2\upsilon^2}} \mu(d\beta).
    \end{align}
 For any two probability measures $\mu_1$, $\mu_2$ supported on $[-1,1]$, define a discrepancy measure
$$d_{\upsilon^2}(\mu_1,\mu_2) :=\sup_{\theta\in \R}\frac{1}{1+\theta^2}\Big|\log \frac{\Psi_{\mu_1,\upsilon^2}(\theta)}{\Psi_{\mu_2,\upsilon^2}(\theta)}\Big|. $$
In particular, this gives
\begin{align}\label{eq:Psi}\Psi_{\mu_1,\upsilon^2}(\theta)\le \Psi_{\mu_2,\upsilon^2}(\theta) e^{(1+\theta^2)d_{\upsilon^2}(\mu_1,\mu_2)},\text{ for all }\theta\in \R.
\end{align}
\end{definition}

\begin{lemma}
    \label{lem:non-asymptotic-stability}
    Suppose $\mathbf{X}$ satisfies Assumption~\ref{assump:On-Design} (i.e.,~\eqref{eq:eigen}). Let $\mu_1 , \mu_2 \in \mathcal{P}$. Then for every $K>0$, there exists $C:= C(K,C_1, C_2)$ such that  
    \begin{align}
        \sup_{\mathbf{v}: \| \mathbf{v}\|_2^2 \leq Kp} \frac{1}{p} \Big| \widetilde{g}(\mathbf{v}, \mu_1,\Sigma) - \widetilde{g}(\mathbf{v}, \mu_2, \Sigma) \Big| \leq C d_{\upsilon^2}(\mu_2, \mu_1) + e^{-p}. 
    \end{align}
\end{lemma}
We now introduce some notation that will be used in the following lemma, and in the sequel. Using the relation ${\bf z}={\bm \beta}+\Sigma^{1/2}{\bm \varepsilon}$ (see \eqref{eq:reg_z}) we also have another form for $\widetilde{m}_{\mu,\Sigma}({\bf v})$, given by:
\begin{align}
\begin{split}\label{eq:gg2}
 \log \widetilde{m}_{\mu,\Sigma}({\bf z}) & =\log \Big[(2\pi)^{-p/2}|\Sigma|^{-1/2}\Big] \\ & \hspace{0.2in} +\log \int \exp\Big[-\frac{1}{2}({\bm \beta}+\Sigma^{1/2}{\bm \varepsilon}-{\bm \theta})^\top \Sigma^{-1}({\bm \beta}+\Sigma^{1/2}{\bm \varepsilon}-{\bm \theta})\Big]d\mu^{\otimes p}({\bm \theta})\\
 &=\log \Big[(2\pi)^{-p/2}|\Sigma|^{-1/2}\Big]+{g}({\bm \beta},{\bm \varepsilon},\mu,\Sigma),
 \end{split}
\end{align}
where
\begin{align}\label{eq:gh2}
{g}({\bm \beta},\boldsymbol{\varepsilon},\mu,\Sigma) := \log \int e^{-{h}({\bm \beta}, {\bm \varepsilon},{\bm \theta})}d\mu^{\otimes p}({\bm \theta})
\end{align}
with 
\begin{align*}
{h}({\bm \beta},{\bm \varepsilon},{\bm \theta},\Sigma)=\frac{1}{2}({\bm \beta}+\Sigma^{1/2}{\bm \varepsilon}-{\bm \theta})^\top \Sigma^{-1}({\bm \beta}+\Sigma^{1/2}{\bm \varepsilon}-{\bm \theta}).
\end{align*}
Comparing \eqref{eq:gg} and \eqref{eq:gg2} it follows that
\begin{align}\label{eq:gg3}
    \widetilde{g}({\bf z},\mu, \Sigma)={g}({\bm \beta},\boldsymbol{\varepsilon},\mu,\Sigma).
\end{align}
Throughout the rest of the paper, we will omit the dependence on $\Sigma$ in $\widetilde{m}_{\mu,\Sigma}(\cdot),   {g}({\bm \beta},\boldsymbol{\varepsilon},\mu,\Sigma)$, $\widetilde{g}({\bf z},\mu, \Sigma)$, barring when we will need to track $\Sigma$ changes, where we will explicitly point out this dependence.

Our second lemma derives the concentration of the function $g$ defined above.

\begin{lemma}
\label{lem:exp_concentration}
There exists constants $C, c >0$ depending only on $C_2$ such that for any $\varepsilon>0$ and $\mu \in \mathcal{P}$,  
\begin{align}
    \mathbb{P}_{\mu^*}\Big( |g(\boldsymbol{\beta}, {\bm \varepsilon}, \mu) - g(\boldsymbol{\beta}, {\bm \varepsilon}, \mu^*) - \mathbb{E}_{\mathbb{P}_{\mu^*}}[g(\boldsymbol{\beta}, {\bm \varepsilon}, \mu) - g(\boldsymbol{\beta}, {\bm \varepsilon}, \mu^*)] | > p \varepsilon \Big)\leq  C\exp(-c p \varepsilon^2). \nonumber 
\end{align}
\end{lemma}

\begin{remark}
For any two measures $\mu_1$ and $\mu_2$,  the discrepancy measure $d_{\upsilon^2}(\cdot, \cdot)$ compares the log-density of $\mu_1 \star N(0,\upsilon^2)$ and $\mu_2 \star N(0,\upsilon^2)$. As  $|\theta|\to \infty,$ the log-ratio of the two smoothed densities linearly in $|\theta|$, and so the factor $1+ \theta^2$ ensures that the measure $d_{\upsilon^2}(\cdot, \cdot)$ effectively focuses on the density ratios in a compact set. It is not hard to show that $d_{\upsilon^2}(\mu_n, \mu) \to 0$ as $n \to \infty$ , iff the sequence $\mu_n \to \mu$ weakly. Since we don't need this result in the current paper, we omit the formal statement and proof of such a result.  
\end{remark}

Let $\mathcal{N}(\mathcal{P},d, \varepsilon)$ denote the $\varepsilon$-covering number of the parameter space $\mathcal{P}$ under the pseudo-metric $d_{\upsilon^2}(\cdot,\cdot)$. The following lemma relates the metric $d_{\upsilon^2}(\mu_1, \mu_2)$ to the $L^{\infty}$ distance on $\mathcal{P}$, defined by
\begin{equation}\label{eq:L_infty}
L^\infty(\mu_1,\mu_2):=\|\Psi_{\mu_1, \upsilon^2} - \Psi_{\mu_2,\upsilon^2}\|_{\infty}.
\end{equation}
\begin{lemma}
\label{lemma:covering}
 There exists a constant $c$ depending only on $C_2$, such that for any $\varepsilon>0$, setting
$   \delta = \exp\Big(-\frac{c}{\varepsilon^2}\Big)$
we have $\mathcal{N}(\mathcal{P}, d_{\upsilon^2}, \varepsilon) \leq \mathcal{N}(\mathcal{P}, L^{\infty}, \delta)$.
\end{lemma}
\begin{lemma}
    \label{lemma:chain_rule}
    For any two measures $\mu_1, \mu_2 \in \mathcal{P}$ we have 
    \begin{align*}
        \mathrm{D}_{\mathrm{KL}}(\widetilde{m}_{\mu_1} \| \widetilde{m}_{\mu_2}) \geq p\,\, \mathrm{D}_{\mathrm{KL}} \left(\mu_1 \star N(0, C_1^{-1}) \| \mu_2 \star N(0, C_1^{-1}) \right). \nonumber 
    \end{align*}
\end{lemma}
\begin{lemma}\label{lem:2026}
   For any $C>0$, there exists finite positive constants $c_1,c_2$ depending only on $C,C_1,C_2$, such that for any sequence of non-negative reals $\{\Delta_p\}_{p\ge 1}$ with $\Delta_p\ge p^{-1/6}$, setting
   \begin{align*}
       \mathcal{P}_p:=\left\{ \mu \in \mathcal{P}: d_{\mathcal{H}}^2(\mu^* \star N(0, C_1^{-1}) , \mu \star N(0, C_1^{-1}) ) >c_1\Delta_p\right\}
       \end{align*}
       we have 
       \begin{align*}
           \P_{\mu^*}\left(\log m_{\mu^*}({\bf y}) \le \sup_{\mu\in \mathcal{P}_p}\log m_{\mu}({\bf y})+ Cp\Delta_p\right)\le c_2 \exp\Big(-\frac{p\Delta_p^2}{c_2}\Big) \le c_2 \exp\Big(-\frac{p^{2/3}}{c_2}\Big).
       \end{align*}
       \end{lemma}
      \begin{proof}
      The second bound is immediate from the first, on noting that $\Delta_p\ge p^{-1/6}$. For the first bound,
          use the fact that ${\bf z}={\bm \beta}+\Sigma^{1/2}{\bm \varepsilon}$ (see \eqref{eq:modified_sequence}) along with the eigenvalue bounds on $\Sigma$ (see \eqref{eq:eigen2}) to note the existence of a constant $K$ depending only on $C_1$, such that
\begin{align}\label{eq:ztail}
    \P_{\mu^*}(\|{\bf z}\|_2^2\ge Kp)\le e^{-p}.
\end{align}
Working under the sequence model \eqref{eq:reg_z}, where $g$ is as in \eqref{eq:gh2}, we can write
\begin{small}
\begin{align}\label{eq:conl2}
    &\P_{\mu^*} \left(\log m_{\mu^*}({\bf y})\le \sup_{\mu \in \mathcal{P}_p}\log m_\mu({\bf y})+Cp\Delta_p \right) \nonumber \\
   =&\mathbb{P}_{\mu^*} \left(g(\boldsymbol{\beta}, {\bm \varepsilon}, \mu^*) \le \sup_{\mu \in \mathcal{P}_p}g(\boldsymbol{\beta}, {\bm \varepsilon}, \mu)+ Cp\Delta_p \right) \nonumber \\
   &\leq \mathbb{P}_{\mu^*} \left(g(\boldsymbol{\beta}, {\bm \varepsilon}, \mu^*) \le \sup_{\mu \in \mathcal{P}_p}g(\boldsymbol{\beta}, {\bm \varepsilon}, \mu)+ Cp\Delta_p , \| \mathbf{z}\|_2^2 \leq Kp \right) + e^{-p}. 
\end{align}
\end{small}
 Let $ \mathcal{N}\subseteq \mathcal{P}_p$ be an $\eta$-net of $\mathcal{P}_p$ with respect to $d_{\upsilon^2}$ (introduced in Definition \ref{def:metric}), where  $\eta =  c_1'\Delta_p$. Here $c_1'$ is a positive constant depending only on $C, C_1,C_2$, but to be specified later.  For any $\mu_0\in \mathcal{P}_p$, let $\mu_1\in \mathcal{N}$ be such that $d_{\upsilon^2}(\mu_0,\mu_1)<\eta.$ On the set $\|{\bf z}\|_2^2\le Kp$, invoking Lemma \ref{lem:non-asymptotic-stability} and the fact that $\Delta_p\ge p^{-1/6}$ we have the existence of a constant $c_2'$ (depending only on $K, C_1,C_2$), such that $$|\widetilde{g}({\bf z},\mu_1)-\widetilde{g}({\bf z},\mu_0)|\le c_2' p\eta=c_1'c_2' p\Delta_p.$$
Since $\widetilde{g}({\bf z},\mu)=g(\boldsymbol{\beta}, {\bm \varepsilon}, \mu)$ (see \eqref{eq:gg3}), we get
\begin{align}\label{eq:conl4}
\notag&\mathbb{P}_{\mu^*}\left(g(\boldsymbol{\beta}, {\bm \varepsilon}, \mu^*) \le \sup_{\mu \in \mathcal{P}_p}g(\boldsymbol{\beta}, {\bm \varepsilon}, \mu)+ Cp\Delta_p, \|\mathbf{z}\|_2^2 \leq Kp\right)\\
    \notag\le&\; \mathbb{P}_{\mu^*}\left(g(\boldsymbol{\beta}, {\bm \varepsilon}, \mu^*) \le \max_{\mu \in \mathcal{N}}g(\boldsymbol{\beta}, {\bm \varepsilon}, \mu)+ p\Delta_p(C+c_1'c_2') , \| \mathbf{z}\|_2^2 \leq Kp\right)\\
    \le &\;|\mathcal{N}|\max_{\mu\in \mathcal{N}}\mathbb{P}_{\mu^*}\left(g(\boldsymbol{\beta}, {\bm \varepsilon}, \mu^*) \le g(\boldsymbol{\beta}, {\bm \varepsilon}, \mu)+ (C+c_1'c_2')p\Delta_p\right).
\end{align}

To bound the cardinality of $\mathcal{N}$, using Lemma~\ref{lemma:covering} gives the existence of a constant $c_3'$ (depending only on $C_2$), such that with $\delta=e^{-\frac{c_3'}{\eta^2}}$ we have 
\begin{align}\label{eq:conl3}
    |\mathcal{N}|\le \mathcal{N}(\mathcal{P}_p,L^\infty,\delta)\le  \exp \Big( \Big(\log \frac{1}{\delta} \Big)^2\Big) =\exp\Big(\frac{(c_3')^2}{\eta^4}\Big)= \exp\Big(\frac{(c_3')^2}{(c_1')^4\Delta_p^4}\Big), 
\end{align}
where the second inequality uses \citet[Theorem 3.1]{Ghosal-vdVaart-2001}.

To bound the probability in the RHS of \eqref{eq:conl4}, observe that
\begin{align}
    \mathbb{E}_{\mathbb{P}_{\mu^*}}[\widetilde{g}(\mathbf{z}, \mu^*) - \widetilde{g}(\mathbf{z}, \mu)] &= \mathrm{D}_{\mathrm{KL}}(\widetilde{m}_{\mu^*} \| \widetilde{m}_{\mu}) \nonumber \\
    &\geq p \,\, \mathrm{D}_{\mathrm{KL}} (\mu^* \star N(0, C_1^{-1}) \| \mu \star N(0, C_1^{-1})) \nonumber \\
    &\geq 2 p \,\, d_{\mathcal{H}}^2( \mu^* \star N(0, C_1^{-1}) , \mu \star N(0, C_1^{-1})) \nonumber \\
    &\geq 2c_1  p\Delta_p. \nonumber 
\end{align}
The first inequality above follows Lemma~\ref{lemma:chain_rule} and the second inequality follows from the relations between KL divergence and the Hellinger distance \citep{polyanskiy2025information}. The last inequality follows from the fact that $\mu\in \mathcal{P}_p$. Choosing $c_1=C+c_1'c_2'$ (where the choice of $c_1'$ has not yet been specified), for any $\mu\in \mathcal{N}\subseteq \mathcal{P}_p$ Lemma~\ref{lem:exp_concentration}
gives the existence of constants $c_4',c_5'$ (depending only on $C_1,C_2$), such that
\begin{align}\label{eq:conl5}
\mathbb{P}_{\mu^*}\left(g(\boldsymbol{\beta}, {\bm \varepsilon}, \mu^*) \le g(\boldsymbol{\beta}, {\bm \varepsilon}, \mu)+ (C+c_1'c_2')p\Delta_p\right)
\le c_4' e^{-c_5'c_1^2 p\Delta_p^2}.
\end{align}

Combining \eqref{eq:conl2}, \eqref{eq:conl4}, \eqref{eq:conl3} and \eqref{eq:conl5}, we get 
\begin{align*}
   \P_{\mu^*}\left(\log m_{\mu^*}({\bf y}) \le \sup_{\mu \in \mathcal{P}_p} \log m_{\mu}({\bf y})+ c_0p\Delta_p\right)\le c_4' \exp\left[\frac{(c_3')^2}{(c_1')^4\Delta_p^4}-c_5'c_1^2p\Delta_p^2 \right].
\end{align*}
Recalling that $c_1=C+c_1'c_2'$, the desired conclusion follows on taking $c_1'$ large enough to ensure that $(C+c_1'c_2')^2c_5'\ge 2 \frac{(c_3')^2}{(c_1')^4},$ and again using the fact that $\Delta_p\ge p^{-1/6}.$
\end{proof}

\subsection{Proof of Theorem~\ref{thm:ml_rate}}
\label{pf:ml_rate}
Setting \[\mathcal{P}_p:=\{ \mu \in \mathcal{P}: d_{\mathcal{H}}^2(\mu^* \star N(0, C_1^{-1}) , \mu \star N(0, C_1^{-1}) ) > cp^{-1/6} \},\]
the conclusion of theorem is equivalent to showing that \begin{align}\label{eq:conl1}
\mathbb{P}_{\mu^*}(\hat{\mu}_{\mathrm{ML}}\in \mathcal{P}_p)\le C \exp\Big(-\frac{p^{2/3}}{C}\Big).
\end{align}
Here $c,C$ are positive constants which can only depend only on $C_1,C_2$.
Recalling that $\hat{\mu}_{\mathrm{ML}}$ is a $\varepsilon_p$-optimizer for the map $\mu\mapsto m_\mu({\bf y})$ with $\varepsilon_p\le p^{5/6}$ a.s., we get
\begin{align*}
  \mathbb{P}_{\mu^*}(\hat{\mu}_{\mathrm{ML}}\in \mathcal{P}_p)\le & \; \P_{\mu^*}\Big(\log m_{\mu^*}({\bf y})\le \sup_{\mu \in \mathcal{P}_p}\log m_\mu({\bf y})+p^{5/6}\Big).
\end{align*}
The desired conclusion \eqref{eq:conl1} then follows immediately on invoking Lemma \ref{lem:2026} with $\Delta_p=p^{-1/6}$ and $C=1$ to bound the RHS above.

 \subsection{Proof of Theorem \ref{lem:approx}}\label{pf:lem-approx}

 \begin{enumerate}
 \item[(a)] To begin, use \eqref{eq:zp} along with \citet[Cor 1.2]{augeri2020nonlinear} to note the existence of a universal constant $\kappa$ such that
\begin{align*}
    \log Z_p({\bf v},\mu)=& \log \int_{[-1,1]^p}\exp\Big(-\frac{1}{2}{\bm \beta}^\top A{\bm \beta}+{\bf v}^\top {\bm \beta}\Big)\prod_{i=1}^p \mu_{(0,d_i)}(\beta_i)\\
    \le& \sup_{{\bm \gamma}\in \R^p} M_p({\bm \gamma},{\bf v}, \mu)+\kappa p^{1/3} \left(\E\sup_{{\bf u}\in [-1,1]^p} {\bf u}^\top A {\bf Z} \right)^{2/3},
\end{align*}
where ${\bf Z}=(Z_1,\cdots,Z_p)\stackrel{i.i.d.}{\sim}N(0,1)$. Proceeding to bound the last term in the RHS above, Cauchy-Schwarz inequality gives
\begin{align*}
\E\sup_{{\bf u}\in [-1,1]^p} {\bf u}^\top A {\bf Z}=\E \|A{\bf Z}\|_1\le \sqrt{p} \sqrt{\E \|A{\bf Z}\|_2^2}=\sqrt{p{\rm Tr}(A^2)}.
\end{align*}
Combining the above two displays we get
$$\log Z_p({\bf v},\mu)\le \sup_{{\bm \gamma}\in \R^p} M_p({\bm \gamma},{\bf v}, \mu)+\kappa p^{2/3} {\rm Tr}(A^2)^{\frac{1}{3}},$$
which gives the desired uniform upper bound of part (a). The corresponding lower bound follows from \eqref{eq:elbo_lower}, and so the proof of part (a) is complete. 
 
 \item[(b)] 
As in the proof of Theorem~\ref{thm:ml_rate}, setting \[\mathcal{Q}_p:=\left\{ \mu \in \mathcal{P}: d_{\mathcal{H}}^2(\mu^* \star N(0, C_1^{-1}) , \mu \star N(0, C_1^{-1}) ) > c\Big[p^{-1/6}+\Big(\frac{{\rm Tr}(A^2)}{p}\Big)^{1/3} \Big]\right\},\]
Then the conclusion of theorem is equivalent to showing that \begin{align}\label{eq:conl21}
\mathbb{P}_{\mu^*}(\hat{\mu}_{\mathrm MF}\in \mathcal{Q}_p)\le C \exp\Big(-\frac{p^{2/3}}{C}\Big).
\end{align}
Here $c,C$ are positive constants which can only depend on $C_1,C_2$. Again recalling that $\hat{\mu}_{MF}$ is a $\varepsilon_p$ optimizer for the map $\mu\mapsto \sup_{{\bm \gamma}\in \R^p} \widetilde{M}_p({\bm \gamma},{\bf w},\mu)$ with $\varepsilon_p\le p^{\frac{5}{6}}$ (see \eqref{eq:Tilde-M_p} and \eqref{eq:mu-MF}), 
we have
\begin{align}\label{eq:conl22}
  \notag \mathbb{P}_{\mu^*}(\hat{\mu}_{\mathrm MF}\in \mathcal{Q}_p)\le &\mathbb{P}_{\mu^*}\left(\sup_{{\bm \gamma}\in \R^p} \widetilde{M}_p({\bm \gamma},{\bf w},\mu^*)\le  \sup_{\mu \in \mathcal{Q}_p}\sup_{{\bm \gamma}\in \R^p} \widetilde{M}_p({\bm \gamma},{\bf w},\mu)+ p^{\frac{5}{6}}\right)\\
   \le &\mathbb{P}_{\mu^*}\left(\log m_{\mu^*}({\bf y})
   \le  \sup_{\mu \in \mathcal{Q}_p} \log m_\mu({\bf y})+p^{5/6}+2\kappa p^{2/3}{\rm Tr}(A^2)^{1/3}\right)
\end{align}
where the last inequality uses  part (a) along with \eqref{eq:elbo_explicit} to conclude that 
$$\sup_{\mu\in \mathcal{P}}\Big|-\frac{n}{2} \log (2 \pi \sigma^2) - \frac{\|\mathbf{y}\|_2^2 }{2 \sigma^2 }+\sup_{{\bm \gamma}\in \R^p} \widetilde{M}_p({\bm \gamma},{\bf w},\mu)- \log m_\mu({\bf y})\Big|\le \kappa p^{2/3} {\rm Tr}(A^2)^{1/3}.$$ 

Invoking Lemma~\ref{lem:2026} with the choices $C=\max(1,2\kappa)$ and $\Delta_p=p^{-1/6}+\Big(\frac{{\mathrm Tr}(A^2)}{p}\Big)^{1/3}$ to bound the RHS of \eqref{eq:conl22} , the desired conclusion \eqref{eq:conl21} follows.
 \end{enumerate}
 \qed

\section{Proof of Lemma~\ref{lemma:separation_sufficient}, Theorem \ref{thm:inference} and Corollary \ref{cor:takeaways}}
\label{sec:thm_inf} 

We prove Lemma~\ref{lemma:separation_sufficient}, Theorem \ref{thm:inference} and Corollary \ref{cor:takeaways} in this section. We start with the proof of Lemma~\ref{lemma:separation_sufficient}, the proof of which is analogous to \citet[Lemma 25]{mukherjee2022variational}.

\begin{proof}[Proof of Lemma~\ref{lemma:separation_sufficient}] 
To begin, use the expression of $\mathcal{M}_p(\cdot,\mathbf{w},\mu^*)$ from \eqref{eq:M_mean_param} along with the lower semi continuity of the Kullback-Leibler divergence to conclude that $\mathcal{M}_p(\cdot,\mathbf{w},\mu^*)$ is upper semi continuous, and so existence of a maximizer $\mathbf{u}^*$ on the compact set $[-1,1]^p$ is immediate. To conclude existence of a well separated optimizer, it suffices to show strong concavity of the function $\mathcal{M}_p(\cdot,\mathbf{w},\mu^*)$, i.e.~there exists a constant $\delta>0$ (free of $p$) such that
\begin{align}\label{eq:concave}
    \sup_{\mathbf{u}\in (\inf {\rm supp}(\mu^*),\sup {\rm supp}(\mu^*))^p}\lambda_{\max}(H_p(\mathbf{u},\mathbf{w},\mu^*))\le -\delta,
\end{align}
where $H_p(\cdot,\mathbf{w},\mu^*)$ is the Hessian of $\mathcal{M}_p(\cdot,\mathbf{w},\mu^*)$, given by
\begin{align*}
 H_p(\mathbf{u},\mathbf{w},\mu^*)=-A-\frac{1}{\ddot{c}_{\mu^*}\Big(h_{\mu^*,d_i}((u_i),d_i\Big)}.
\end{align*}
We now verify this under the  assumptions of the lemma.
\begin{itemize}
    \item[(i)] 
In this case, since $\ddot{c}(\cdot,d_i)$ is the variance of a random variable supported on $[-1,1]$, we can use the trivial bound $\ddot{c}_{\mu^*}\Big(h_{\mu^*,d_i}((u_i),d_i\Big)\le 1$, giving
\[H_p(\mathbf{u},\mathbf{w},\mu^*)=-A-\frac{1}{\ddot{c}_{\mu^*}\Big(h_{\mu^*,d_i}((u_i),d_i\Big)}\preceq -A-{\bf I}\preceq -\eta {\bf I},\]
thus verifying \eqref{eq:concave} with $\delta=\eta$.
\\
 \item[(ii)] 
 In this case, the assumptions on the prior $\mu^*$ ensures that it satisfies the GHS inequality (see \citet{ellis1976ghs}[Theorem 1.1]{}) and (\citet{ellis1976ghs}[Theorem 1.2 part (c)]), 
    which in particular gives the inequality
    $\ddot{c}_{\mu^*}(\theta,d_i)\le \ddot{c}_{\mu^*}(0,d_i)$ for any $\theta\in \R$. We further claim that for any $\theta\in \R$,
\begin{align}\label{eq:ghs_claim}
        \sup_{d\ge 0}d\ddot{c}_{\mu^*}(\theta,d)\le 1.
    \end{align}
    Given this claim, we can  use the  bound
    \[\ddot{c}_{\mu*}\Big(h_{\mu^*,d_i}(u_i),d_i\Big)\le \ddot{c}_{\mu*}\Big(0,d_i\Big)\le \frac{1}{d_i},\]
along with the observation that $d_i=\sigma^{-2}(\mathbf{X}^\top\mathbf{X})_{ii}$ (see \eqref{eq:notation}) to conclude 
    \[H_p(\mathbf{u},\mathbf{w},\mu^*)\preceq -\sigma^{-2}\mathbf{X}^\top \mathbf{X}\preceq -C_1{\bf I}, \]
    where the last step uses \eqref{eq:eigen}. Thus we have again verified \eqref{eq:concave} with $\delta=C_1$.
    \\
    It thus remains to verify the claim \eqref{eq:ghs_claim}. To this effect, consider a scale family of parametric distributions 
\(\{P_\theta:\theta\ge 0\}\) with
\[
\frac{dP_\theta}{dx}
\propto
\exp\{-\theta V(x)\}\exp\left\{-\frac{d x^2}{2}\right\}.
\]
Since \(V(\cdot)\) is even, it follows that the first moment under \(P_\theta\) is
\(0\) for all \(\theta\). Also, since \(V(\cdot)\) is increasing, this family has
monotone likelihood ratio in \(T(x)=|x|\), and thus the second moment under the law
\(P_\theta\) is decreasing in \(\theta\) for \(\theta>0\). Therefore,
\[
\ddot c(0,d)
=
\operatorname{Var}_{\theta=1}(Z)
\le
\operatorname{Var}_{\theta=0}(Z).
\]
Proceeding to bound the right-hand side of the above display, for \(d>0\) we have
\[
d\,\operatorname{Var}_{\theta=0}(Z)
=
\frac{
d\displaystyle\int_{-1}^{1} z^2 \exp\left\{-\frac{d z^2}{2}\right\}\,dz
}{
\displaystyle\int_{-1}^{1} \exp\left\{-\frac{d z^2}{2}\right\}\,dz
}.
\]
We substitute \(t=\sqrt d\,z\), so that
\[
d\,\operatorname{Var}_{\theta=0}(Z)
=
\frac{
\displaystyle\int_{-\sqrt d}^{\sqrt d} t^2 \exp\left\{-\frac{t^2}{2}\right\}\,dt
}{
\displaystyle\int_{-\sqrt d}^{\sqrt d} \exp\left\{-\frac{t^2}{2}\right\}\,dt
}.
\]
This is exactly the variance of a truncated standard Gaussian distribution, truncated to the
interval \([-\sqrt d,\sqrt d]\). Indeed, the truncated variance is
\[
1-
\frac{2\sqrt d\,\phi(\sqrt d)}{2\Phi(\sqrt d)-1}
<1,
\]
where \(\phi(\cdot)\) and \(\Phi(\cdot)\) represent the pdf and cdf of the standard Gaussian
distribution, respectively (see \citet{johnson1994continuous}).
\end{itemize} 
\end{proof}


We now turn to the proof of Theorem \ref{thm:inference} and Corollary \ref{cor:takeaways}. 
To this end, we make the following definition.


\begin{definition}
Let $\{a_p : p \geq 1\}$ be any deterministic sequence of positive reals.  
Let $\{\Gamma(p,K): p\geq 1, K \geq 1\}$ be an array of random variables  satisfying 
$$\limsup_{K\to\infty}\limsup_{p\to\infty}\P_{\mu^*}(|\Gamma(p,K)|>\delta a_p)= 0$$ for every $\delta >0$. 
We will denote such a sequence as $\er(a_p)$.  

\end{definition}

In our subsequent discussions, the $\er(\cdot)$ variable may change from line to line. Throughout, we suppress the dependence of $\er(\cdot)$ on $\mu^*$. 

 \subsection{Auxiliary lemmas for Theorem \ref{thm:inference} and Corollary \ref{cor:takeaways}}
 
We now state three lemmas that will be useful in our subsequent analysis. We defer the proofs of these results to Appendix \ref{sec:supporting}. 
\begin{lemma}\label{lem:w}
    With $\mathbf{w}$ as in \eqref{eq:notation}, we have
    $\|\mathbf{w}\|_2^2=O_{\P_{\mu^*}}(p)$.
\end{lemma}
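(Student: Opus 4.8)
The plan is to bound $\|\mathbf{w}\|_2^2 = \|\sigma^{-2}\mathbf{X}^\top\mathbf{y}\|_2^2$ directly by expanding $\mathbf{y} = \mathbf{X}\bbeta + \bm\varepsilon$ under $\P_{\mu^*}$ and controlling each piece via the operator-norm bound from Assumption~\ref{assump:On-Design}. Writing $\mathbf{w} = \sigma^{-2}\mathbf{X}^\top\mathbf{X}\bbeta + \sigma^{-2}\mathbf{X}^\top\bm\varepsilon$, the triangle inequality gives $\|\mathbf{w}\|_2 \le \sigma^{-2}\|\mathbf{X}^\top\mathbf{X}\bbeta\|_2 + \sigma^{-2}\|\mathbf{X}^\top\bm\varepsilon\|_2$, so it suffices to show each term is $O_{\P_{\mu^*}}(\sqrt p)$.

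For the first term, $\|\mathbf{X}^\top\mathbf{X}\bbeta\|_2 \le \|\mathbf{X}^\top\mathbf{X}\|_2\,\|\bbeta\|_2 \le \sigma^2 C_2 \|\bbeta\|_2$ by \eqref{eq:eigen}. Since $\beta_1,\ldots,\beta_p \stackrel{iid}{\sim}\mu^*$ and $\mu^*$ is supported on $[-1,1]$ (Assumption~\ref{assump:On-P}), we have $\|\bbeta\|_2^2 \le p$ deterministically, so this contributes $O(\sqrt p)$ (in fact deterministically bounded). For the second term, $\E_{\P_{\mu^*}}\big[\|\mathbf{X}^\top\bm\varepsilon\|_2^2\big] = \E\big[\bm\varepsilon^\top\mathbf{X}\mathbf{X}^\top\bm\varepsilon\big] = \sigma^2\,\mathrm{Tr}(\mathbf{X}\mathbf{X}^\top) = \sigma^2\,\mathrm{Tr}(\mathbf{X}^\top\mathbf{X}) \le \sigma^2 \cdot p \cdot \sigma^2 C_2 = \sigma^4 C_2 p$, using that $\mathbf{X}^\top\mathbf{X}$ is a $p\times p$ matrix with all eigenvalues at most $\sigma^2 C_2$. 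Hence $\E_{\P_{\mu^*}}\|\mathbf{X}^\top\bm\varepsilon\|_2^2 = O(p)$, and Markov's inequality gives $\|\mathbf{X}^\top\bm\varepsilon\|_2^2 = O_{\P_{\mu^*}}(p)$, i.e.\ $\|\mathbf{X}^\top\bm\varepsilon\|_2 = O_{\P_{\mu^*}}(\sqrt p)$.

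Combining, $\|\mathbf{w}\|_2 \le \sigma^{-2}\cdot \sigma^2 C_2 \sqrt p + \sigma^{-2}\cdot O_{\P_{\mu^*}}(\sqrt p) = O_{\P_{\mu^*}}(\sqrt p)$, so $\|\mathbf{w}\|_2^2 = O_{\P_{\mu^*}}(p)$, as claimed. I do not anticipate a genuine obstacle here — this is a routine second-moment computation — the only mild point of care is keeping the $\sigma$ powers straight and remembering that the bound on $\|\bbeta\|_2$ is deterministic because the prior is compactly supported, so no moment assumption on $\mu^*$ beyond Assumption~\ref{assump:On-P} is needed.
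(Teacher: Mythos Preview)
Your proof is correct and essentially the same as the paper's. The paper writes $\mathbf{w}\mid\bbeta\sim N(\sigma^{-2}\mathbf{X}^\top\mathbf{X}\bbeta,\sigma^{-2}\mathbf{X}^\top\mathbf{X})$ and computes $\E[\|\mathbf{w}\|_2^2\mid\bbeta]=\|\sigma^{-2}\mathbf{X}^\top\mathbf{X}\bbeta\|_2^2+\mathrm{tr}(\sigma^{-2}\mathbf{X}^\top\mathbf{X})\le (C_2^2+C_2)p$, which is exactly your two terms combined without the intermediate triangle-inequality step; the ingredients (operator-norm bound, $\|\bbeta\|_2^2\le p$ from compact support, trace bound) are identical.
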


\begin{lemma}\label{lem:unif_cont}
Recall Definition~\ref{defn:Quad-Tilt}. The following conclusions hold:

(i) The function $(\mu,\tau,d)\mapsto c_\mu(\tau,d)$ is continuous on $\mathcal{P}\times \R\times (0,\infty)$.

(ii) The function $(\mu,\tau,d)\mapsto \mu_{\tau,d}$ is continuous on $\mathcal{P}\times \R\times (0,\infty)$.

(iii) The function $(\mu,\tau,d)\mapsto \dot{c}_\mu(\tau,d)$ is continuous on $\mathcal{P}\times \R\times (0,\infty)$.

(iv) The function $(v,d)\mapsto h_{\mu,d}(v)$ is continuous on $I_\mu\times (0,\infty)$, for any $\mu\in \mathcal{P}$. Here $I_\mu=(\inf{\rm supp}(\mu), \sup {\rm supp} (\mu))$ is as in Definition \ref{defn:Quad-Tilt}.

\end{lemma}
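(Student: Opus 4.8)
The plan is to derive all four parts from one elementary fact: on the compact set $[-1,1]$, the integrand $\phi_{\tau,d}(\beta):=\exp(\tau\beta-\tfrac{d}{2}\beta^2)$ (and also $\beta\phi_{\tau,d}(\beta)$) is jointly continuous in $(\beta,\tau,d)$, and for $(\tau,d)$ in a compact subset of $\R\times(0,\infty)$ it is uniformly bounded, uniformly continuous, and $\phi_{\tau,d}$ is moreover bounded below by a positive constant. Hence if $(\mu_k,\tau_k,d_k)\to(\mu_\infty,\tau_\infty,d_\infty)$ in $\mathcal{P}\times\R\times(0,\infty)$, splitting
$$\int\phi_{\tau_k,d_k}\,d\mu_k-\int\phi_{\tau_\infty,d_\infty}\,d\mu_\infty=\int(\phi_{\tau_k,d_k}-\phi_{\tau_\infty,d_\infty})\,d\mu_k+\Big(\int\phi_{\tau_\infty,d_\infty}\,d\mu_k-\int\phi_{\tau_\infty,d_\infty}\,d\mu_\infty\Big)$$
shows the left side tends to $0$: the first term is bounded by $\sup_{\beta\in[-1,1]}|\phi_{\tau_k,d_k}(\beta)-\phi_{\tau_\infty,d_\infty}(\beta)|\to0$ by uniform continuity, and the second by weak convergence of $\mu_k$ against the fixed bounded continuous function $\phi_{\tau_\infty,d_\infty}$. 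Since the limiting integral is bounded below by a positive constant, taking logarithms yields (i). Running the identical argument with $\beta\phi_{\tau,d}(\beta)$ in the numerator and $\phi_{\tau,d}(\beta)$ in the denominator (differentiation under the integral sign being justified by the compact support) gives (iii). For (ii), fix a bounded continuous $f$ on $[-1,1]$ and apply the same split with integrand $f\phi_{\tau_k,d_k}$ (still uniformly convergent on $[-1,1]$, as $f$ is fixed and bounded) to get $\int f\,d\mu_{\tau_k,d_k}=\big(\int f\phi_{\tau_k,d_k}\,d\mu_k\big)/\big(\int\phi_{\tau_k,d_k}\,d\mu_k\big)\to\int f\,d\mu_{\tau_\infty,d_\infty}$, i.e. $\mu_{\tau_k,d_k}$ converges weakly to $\mu_{\tau_\infty,d_\infty}$.

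For (iv), fix a non-degenerate $\mu\in\mathcal{P}$, so that $I_\mu=(a,b)$ with $a:=\inf\mathrm{Supp}(\mu)<b:=\sup\mathrm{Supp}(\mu)$ (if $\mu$ is a point mass the statement is vacuous). Let $(v_k,d_k)\to(v_\infty,d_\infty)$ in $I_\mu\times(0,\infty)$ and set $\tau_k:=h_{\mu,d_k}(v_k)$, so $\dot c_\mu(\tau_k,d_k)=v_k$. I would first show $\{\tau_k\}$ is bounded, then conclude convergence. The key estimate for boundedness is that $\dot c_\mu(\tau,d)=\E_{\mu_{\tau,d}}[\beta]\to b$ as $\tau\to+\infty$ uniformly over $d$ in any bounded subset $(0,d_1]$ of $(0,\infty)$: for $\varepsilon>0$, splitting the normalizing integral and using $0<e^{-d\beta^2/2}\le1$, $e^{\tau\beta}\le e^{\tau(b-\varepsilon)}$ for $\beta\le b-\varepsilon$, $\tau>0$, one gets $\mu_{\tau,d}([-1,b-\varepsilon])\le e^{d_1/2}\,\mu([b-\tfrac{\varepsilon}{2},b])^{-1}\,e^{-\tau\varepsilon/2}\to0$ (note $\mu([b-\tfrac\varepsilon2,b])>0$ since $b=\sup\mathrm{Supp}(\mu)$), whence $\dot c_\mu(\tau,d)\ge(b-\varepsilon)-2\mu_{\tau,d}([-1,b-\varepsilon])\to b-\varepsilon$, and $\varepsilon$ is arbitrary; symmetrically $\dot c_\mu(\tau,d)\to a$ as $\tau\to-\infty$. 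Thus a subsequence of $\{\tau_k\}$ diverging to $+\infty$ (resp. $-\infty$) would force $v_k\to b$ (resp. $v_k\to a$), contradicting $v_k\to v_\infty\in(a,b)$; hence $\{\tau_k\}$ is bounded. Finally, any subsequential limit $\tau'$ of $\{\tau_k\}$ satisfies $\dot c_\mu(\tau',d_\infty)=\lim\dot c_\mu(\tau_{k_j},d_{k_j})=v_\infty$ by the joint continuity established in (iii), so $\tau'=h_{\mu,d_\infty}(v_\infty)$ by the strict monotonicity (hence injectivity) of $\dot c_\mu(\cdot,d_\infty)$ from Definition~\ref{defn:Quad-Tilt}; a bounded sequence with a unique subsequential limit converges, which is (iv).

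The main obstacle is Step~1 of part (iv): ruling out blow-up of the natural parameter $h_{\mu,d}(v)$ as $v$ ranges over the open interior $I_\mu$, \emph{uniformly} in $d$ near $d_\infty$. For a single fixed $d$ the inverse $h_{\mu,d}$ is automatically continuous as the inverse of a continuous strictly increasing bijection between intervals, but joint continuity forces the quantitative edge-concentration estimate above; all remaining parts reduce, as indicated, to weak convergence tested against a uniformly convergent family of bounded continuous functions on the compact set $[-1,1]$, together with the positive lower bound on the normalizing constants, and are routine.
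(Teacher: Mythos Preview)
Your proof is correct and follows the same overall architecture as the paper's: establish joint continuity of the normalizing and moment integrals for (i)--(iii), then for (iv) argue boundedness of $\{\tau_k\}$ followed by identification of the limit via injectivity of $\dot c_\mu(\cdot,d_\infty)$.

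The technical choices differ in a few places worth noting. For (i)--(iii) you use a direct splitting of $\int\phi_{\tau_k,d_k}\,d\mu_k-\int\phi_{\tau_\infty,d_\infty}\,d\mu_\infty$ and uniform continuity on $[-1,1]$; the paper instead invokes Skorohod's representation and dominated convergence. Both are equally elementary here. For (ii) the paper takes a slicker route: rather than testing against an arbitrary bounded continuous $f$, it observes that the MGF of $\mu_{\tau_k,d_k}$ is $\exp\big(c_{\mu^{(k)}}(\alpha+\tau_k,d_k)-c_{\mu^{(k)}}(\tau_k,d_k)\big)$ and reduces directly to (i). For (iv), your explicit edge-concentration estimate $\mu_{\tau,d}([-1,b-\varepsilon])\le e^{d_1/2}\mu([b-\tfrac{\varepsilon}{2},b])^{-1}e^{-\tau\varepsilon/2}$ is more rigorous than the paper's one-line assertion that unboundedness of $\tau_k$ ``would contradict $v_\infty\in(a+\varepsilon,b-\varepsilon)$''; the paper then finishes by transferring $\dot c_\mu(\tau_k,d_k)$ to $\dot c_\mu(\tau_k,d_\infty)$ via uniform continuity in $d$ and applying $h_{\mu,d_\infty}$, whereas you use a subsequential-limit argument invoking (iii) directly. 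Your route makes the uniformity-in-$d$ requirement for the boundedness step explicit, which the paper leaves implicit.
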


\begin{lemma}
\label{lem:dev_bounds}
    Assume we are in the setting of Theorem \ref{thm:inference}. Let $\mathbf{v}^{(p)} = (v^{(p)}_1,\ldots, v^{(p)}_p)\in [-1,1]^p$ be any global optimizer of $\mathcal{M}_p(\cdot,\mathbf{w},\mu^{(p)})$. Setting ${ \gamma}^{(p)}_i:=h_{\mu^{(p)},d_i}(v^{(p)}_i)$, for $i \in [p]$, we have $$| \{i \in [p]:|\gamma^{(p)}_i|> K\} |=\er(p). $$
\end{lemma}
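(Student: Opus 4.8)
## Proof proposal for Lemma~\ref{lem:dev_bounds}

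The plan is to argue that if too many of the $\gamma^{(p)}_i$ were large in absolute value, then the objective $\mathcal{M}_p(\mathbf{v}^{(p)}, \mathbf{w}, \mu^{(p)})$ would be pushed strictly below its value at a competitor point by an amount of order $p$, contradicting optimality. Concretely, recall from~\eqref{eq:M_mean_param} that
\[
\mathcal{M}_p(\mathbf{v}, \mathbf{w}, \mu) = -\tfrac{1}{2}\mathbf{v}^\top A \mathbf{v} + \mathbf{v}^\top \mathbf{w} - \sum_{i=1}^p \dkl\big(\mu_{h_{\mu,d_i}(v_i), d_i}\,\|\,\mu_{0,d_i}\big),
\]
and that $\dkl(\mu_{h_{\mu,d_i}(v_i),d_i}\|\mu_{0,d_i}) = G_{\mu,d_i}(v_i)$ with $G_{\mu,d}$ as in~\eqref{eq:G_defn}. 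The key structural fact I would use is a \emph{lower bound} on $G_{\mu,d}(v)$ that grows with the size of the corresponding natural parameter $\gamma = h_{\mu,d}(v)$: from the identity $G_{\mu,d}(v) = v\,h_{\mu,d}(v) - c_\mu(h_{\mu,d}(v),d) + c_\mu(0,d)$ and convexity of $c_\mu(\cdot,d)$, one gets $G_{\mu,d}(v) \geq \gamma \big(v - \dot c_\mu(0,d)\big)$, or more robustly $G_{\mu,d}(v) = \int_0^\gamma (v - \dot c_\mu(s,d))\,ds$. Since $\dot c_\mu(\cdot, d)$ has range $I_\mu \subseteq (-1,1)$ (the support being contained in $[-1,1]$ by Assumption~\ref{assump:On-P}), and $v = \dot c_\mu(\gamma,d) \to \pm 1$ as $\gamma \to \pm\infty$, this integral grows at least linearly in $|\gamma|$ for $|\gamma|$ large, uniformly over $\mu \in \mathcal{P}$ and $d \in [C_1, C_2]$; compactness of $\mathcal{P}$ (weak topology) together with Lemma~\ref{lem:unif_cont} gives the needed uniformity. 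So there is a function $\rho(K) \to \infty$ with $G_{\mu,d}(v) \geq \rho(K)$ whenever $|h_{\mu,d}(v)| > K$.

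Next I would bound the other two terms. Since $\mathbf{v}^{(p)} \in [-1,1]^p$, we have $|\mathbf{v}^{(p)\top} A\, \mathbf{v}^{(p)}| \leq \|A\|_2 \, p \leq (C_2 + C_1^{-1})\, p$ (recall $A$ is the off-diagonal part of $\sigma^{-2}\mathbf{X}^\top\mathbf{X}$, so $\|A\|_2 = O(1)$ under Assumption~\ref{assump:On-Design}), and $|\mathbf{v}^{(p)\top}\mathbf{w}| \leq \|\mathbf{v}^{(p)}\|_2 \|\mathbf{w}\|_2 \leq \sqrt{p}\,\|\mathbf{w}\|_2 = O_{\P_{\mu^*}}(p)$ by Lemma~\ref{lem:w}. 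Hence $\mathcal{M}_p(\mathbf{v}^{(p)}, \mathbf{w}, \mu^{(p)}) \leq C p - \sum_{i=1}^p G_{\mu^{(p)}, d_i}(v^{(p)}_i)$ for a constant $C$ absorbing an $O_{\P_{\mu^*}}(p)$ term. On the other hand, comparing with the trivial competitor $\mathbf{v} = \mathbf{0}$ (for which every $\gamma_i = h_{\mu^{(p)},d_i}(0)$, a bounded quantity since $0 \in I_{\mu^{(p)}}$ for non-degenerate $\mu^{(p)}$ — actually I'd use the competitor $\mathbf{v} = \big(\dot c_{\mu^{(p)}}(0,d_i)\big)_i$ so that all natural parameters vanish and the KL terms are $0$), we get $\sup_{\mathbf{v}} \mathcal{M}_p(\mathbf{v},\mathbf{w},\mu^{(p)}) \geq -C' p$ for some constant $C'$, again via Lemma~\ref{lem:w}. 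Optimality of $\mathbf{v}^{(p)}$ then forces $\sum_{i=1}^p G_{\mu^{(p)},d_i}(v^{(p)}_i) \leq (C + C')\, p + o_{\P_{\mu^*}}(p) =: \tilde C p$ with high probability. Combining with the per-coordinate lower bound, $\rho(K) \cdot |\{i : |\gamma^{(p)}_i| > K\}| \leq \sum_i G_{\mu^{(p)},d_i}(v^{(p)}_i) \leq \tilde C p$, so $|\{i : |\gamma^{(p)}_i| > K\}| \leq (\tilde C / \rho(K))\, p$, and letting $K \to \infty$ makes the coefficient vanish; this is precisely the $\er(p)$ statement.

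The main obstacle I anticipate is making the growth bound $G_{\mu,d}(v) \geq \rho(K)$ for $|h_{\mu,d}(v)| > K$ genuinely \emph{uniform} over the sequence $\mu^{(p)}$ (which varies with $p$, though it converges weakly to $\mu^*$) and over the range $d_i \in [C_1, C_2]$. One has to rule out degeneracies where $\mu^{(p)}$ concentrates near an endpoint of $[-1,1]$, which would flatten $\dot c_{\mu^{(p)}}$ and weaken the growth of $G$. I would handle this by working with $\mu^{(p)}$ only through its weak limit for the asymptotic statement — more precisely, by a compactness/subsequence argument: if the conclusion failed, extract a subsequence along which $|\{i : |\gamma^{(p)}_i|>K\}| \geq \delta p$ for fixed $K, \delta$, pass to a weak limit of the $\mu^{(p)}$ (still in $\mathcal{P}$ by closedness), and derive a contradiction using the fixed limiting prior where the growth bound is clean. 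A secondary technical point is that the $o_{\P_{\mu^*}}(p)$ slack in the definition of "approximate optimizer" (if relevant) and the $O_{\P_{\mu^*}}(p)$ control on $\|\mathbf{w}\|_2$ only hold on high-probability events, so all the displayed inequalities should be intersected with the event $\{\|\mathbf{w}\|_2^2 \leq R p\}$ and then $R \to \infty$ taken at the end — this is exactly what the $\er(\cdot)$ notation is designed to bookkeep.
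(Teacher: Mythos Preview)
Your argument has a genuine gap at its core step. You claim that $G_{\mu,d}(v) \geq \rho(K)$ with $\rho(K)\to\infty$ whenever $|h_{\mu,d}(v)|>K$, and even that the growth is linear in $|\gamma|$. This is false in general: if $\mu$ has an atom at an endpoint of its support (e.g.\ $\mu=\tfrac12\delta_0+\tfrac12\delta_1$, a case fully allowed by Assumption~\ref{assump:On-P} and Theorem~\ref{thm:inference}), then as $\gamma\to\infty$ the tilted law $\mu_{\gamma,d}$ converges to the point mass at that endpoint and $G_{\mu,d}(v(\gamma))=\dkl(\mu_{\gamma,d}\|\mu_{0,d})$ stays \emph{bounded} (in the two-point example it tends to $\log 2$). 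Even for continuous $\mu$ the growth is only logarithmic in $\gamma$, not linear. Since your comparison inequality $\rho(K)\,|\{i:|\gamma_i^{(p)}|>K\}|\le \tilde C p$ gives no information unless $\rho(K)\to\infty$, the whole scheme breaks down for priors with atoms at the support boundary, and the compactness/subsequence patch you propose does not help because the failure is already at the level of a fixed $\mu$.

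The paper's argument bypasses this entirely by using the first-order optimality condition rather than an energy comparison. One checks (via the boundary behavior of $h_{\mu^{(p)},d_i}$) that any global optimizer $\mathbf{v}^{(p)}$ lies in the interior $I_{\mu^{(p)}}^p$, so $\nabla\mathcal{M}_p(\mathbf{v}^{(p)},\mathbf{w},\mu^{(p)})=0$. The gradient formula $\nabla\mathcal{M}_p(\mathbf{r},\mathbf{w},\mu^{(p)})=-A\mathbf{r}+\mathbf{w}-h_{\mu^{(p)},\mathbf{d}}(\mathbf{r})$ then gives the \emph{explicit identity} $\bgamma^{(p)}=-A\mathbf{v}^{(p)}+\mathbf{w}$, whence $\|\bgamma^{(p)}\|_2^2\le 2\|\mathbf{w}\|_2^2+2p\,\lambda_{\max}(A)^2=O_{\P_{\mu^*}}(p)$ by Lemma~\ref{lem:w} and~\eqref{eq:eigen}. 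Markov's inequality finishes: $|\{i:|\gamma_i^{(p)}|>K\}|\le K^{-2}\|\bgamma^{(p)}\|_2^2=O_{\P_{\mu^*}}(p)/K^2$, which is $\er(p)$. No growth bound on $G_{\mu,d}$ is needed, and no uniformity-in-$\mu^{(p)}$ issue arises.
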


\begin{lemma}
\label{lem:approximations}
    Assume we are in the setting of Theorem \ref{thm:inference}. Set 
        \begin{align}
     & {\tau}_i^{(p,K)}:=\tau_i^{(p)} 1\{|\tau_i|\le K\}, \qquad  \bm{\tau}^{(p,K)}:=(\tau_i^{(p,K)})_{i\in [p]}, \nonumber \\
     &  {\mathbf{u}}^{(p,K)} = \dot{c}_{\mu^{(p)}}({\bm \tau}^{(p,K)},\mathbf{d}), \qquad  \mathbf{u}_*^{(p,K)} = \dot{c}_{\mu^*}({\bm \tau}^{(p,K)} ,\mathbf{d}). \label{eq:u_defns} 
    \end{align}
    Then we have, 
    \begin{align}
        \| \mathbf{u}^{(p)} - {\mathbf{u}}^{(p,K)} \|_2^2 = \er(p), \qquad \mbox{and} \qquad
          \| {\mathbf{u}}_*^{(p,K)} - {\mathbf{u}}^{(p,K)} \|_2^2 = o_{\P_{\mu^*}}(p)\quad \forall\,\, K>0.  \nonumber
    \end{align}
    Combining, we have, $  \| \mathbf{u}^{(p)} - {\mathbf{u}}_*^{(p,K)} \|_2^2 =\er(p)$. 
\end{lemma}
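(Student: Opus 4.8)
\textbf{Proof proposal for Lemma \ref{lem:approximations}.}

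The plan is to prove the two displayed estimates separately and then combine them by the triangle inequality. For the first estimate, $\|\mathbf{u}^{(p)} - \mathbf{u}^{(p,K)}\|_2^2 = \er(p)$, the strategy is to note that $\mathbf{u}^{(p)} = \dot c_{\mu^{(p)}}(\bm\tau^{(p)},\mathbf{d})$ (by the definition $\tau_i^{(p)} = h_{\mu^{(p)},d_i}(u_i^{(p)})$ together with \eqref{h:defn}), while $\mathbf{u}^{(p,K)} = \dot c_{\mu^{(p)}}(\bm\tau^{(p,K)},\mathbf{d})$, where $\bm\tau^{(p,K)}$ is the truncation of $\bm\tau^{(p)}$ at level $K$. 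Thus $u_i^{(p)}$ and $u_i^{(p,K)}$ differ \emph{only} on the coordinates where $|\tau_i^{(p)}| > K$. On those coordinates, since the support of $\mu^{(p)}$ lies in $[-1,1]$, both $u_i^{(p)}$ and $u_i^{(p,K)}$ are bounded in absolute value by $1$, so each such coordinate contributes at most $4$ to the squared norm. Therefore $\|\mathbf{u}^{(p)} - \mathbf{u}^{(p,K)}\|_2^2 \le 4\,|\{i\in[p] : |\tau_i^{(p)}| > K\}|$, and Lemma \ref{lem:dev_bounds} (applied with $\mathbf{v}^{(p)} = \mathbf{u}^{(p)}$, $\gamma_i^{(p)} = \tau_i^{(p)}$) gives that this count is $\er(p)$. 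This is the routine half.

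For the second estimate, $\|\mathbf{u}_*^{(p,K)} - \mathbf{u}^{(p,K)}\|_2^2 = o_{\P_{\mu^*}}(p)$ for each fixed $K$, the idea is that $\mathbf{u}_*^{(p,K)} = \dot c_{\mu^*}(\bm\tau^{(p,K)},\mathbf{d})$ and $\mathbf{u}^{(p,K)} = \dot c_{\mu^{(p)}}(\bm\tau^{(p,K)},\mathbf{d})$ use the \emph{same} (truncated) tilt parameters but different base measures, and $\mu^{(p)} \to \mu^*$ weakly. Here I would invoke the uniform continuity of $(\mu,\tau,d)\mapsto \dot c_\mu(\tau,d)$ from Lemma \ref{lem:unif_cont}(iii): restricted to the compact set $\mathcal{P} \times [-K,K] \times [1/C_2, C_2]$ (recall $d_i = \sigma^{-2}(\mathbf{X}^\top\mathbf{X})_{ii} \in [1/C_2, C_2]$ by Assumption \ref{assump:On-Design}, after noting the diagonal entries are sandwiched by the eigenvalues — actually one needs $d_i \le \lambda_{\max}(\mathbf{X}^\top\mathbf{X})/\sigma^2 \le C_2$ and $d_i \ge \lambda_{\min}/\sigma^2 \ge C_1$, so the relevant compact interval is $[C_1,C_2]$), the function $\dot c_\mu(\tau,d)$ is uniformly continuous, hence for any $\epsilon > 0$ there is a weak-neighborhood of $\mu^*$ on which $\sup_{|\tau|\le K, d\in[C_1,C_2]} |\dot c_{\mu}(\tau,d) - \dot c_{\mu^*}(\tau,d)| < \epsilon$. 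Since $\mu^{(p)} \to \mu^*$ weakly, for $p$ large this bound applies with $\mu = \mu^{(p)}$, giving $|u_{*,i}^{(p,K)} - u_i^{(p,K)}| < \epsilon$ for \emph{every} $i\in[p]$, hence $\|\mathbf{u}_*^{(p,K)} - \mathbf{u}^{(p,K)}\|_2^2 \le \epsilon^2 p$. As $\epsilon$ was arbitrary this is $o(p)$ — in fact deterministically $o(p)$ once $\mu^{(p)}$ is deterministic, or $o_{\P_{\mu^*}}(p)$ in general. Finally, combining via $\|\mathbf{u}^{(p)} - \mathbf{u}_*^{(p,K)}\|_2^2 \le 2\|\mathbf{u}^{(p)} - \mathbf{u}^{(p,K)}\|_2^2 + 2\|\mathbf{u}^{(p,K)} - \mathbf{u}_*^{(p,K)}\|_2^2$ and noting that a sum of an $\er(p)$ term and an $o_{\P_{\mu^*}}(p)$ term is again $\er(p)$ yields the last claim.

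The main obstacle I anticipate is not in the continuity argument itself but in being careful about the order of limits hidden in the $\er(\cdot)$ notation: the first estimate is $\er(p)$ (requiring the $\limsup_{K\to\infty}\limsup_{p\to\infty}$ double limit), while the second is $o_{\P_{\mu^*}}(p)$ for \emph{each fixed} $K$, so when combining I must check that adding these two types of error terms is legitimate — i.e., that $o_{\P_{\mu^*}}(p) \subseteq \er(p)$ trivially (take the array constant in $K$) and that $\er(p) + \er(p) = \er(p)$, which follows from a union bound and the fact that $\limsup$ of a sum is bounded by the sum of $\limsup$s. A secondary technical point is justifying that $\bm\tau^{(p,K)} \in [-K,K]^p$ so that the compactness used for uniform continuity is valid, and that all $d_i$ lie in a fixed compact subinterval of $(0,\infty)$ uniformly in $i$ and $p$, which is exactly the content of Assumption \ref{assump:On-Design}.
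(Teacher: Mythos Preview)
Your proposal is correct and follows essentially the same approach as the paper's own proof: for the first estimate you bound the squared norm by (a constant times) the cardinality $|\{i:|\tau_i^{(p)}|>K\}|$ and invoke Lemma~\ref{lem:dev_bounds}, and for the second you use the uniform continuity of $(\mu,\tau,d)\mapsto \dot c_\mu(\tau,d)$ on a compact set (Lemma~\ref{lem:unif_cont}(iii)) together with $\mu^{(p)}\stackrel{w}{\to}\mu^*$. Your write-up is in fact more explicit than the paper's about the range $d_i\in[C_1,C_2]$ and about why the $\er(p)$ and $o_{\P_{\mu^*}}(p)$ errors combine properly, but the underlying argument is the same.
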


\subsection{Proof of Theorem \ref{thm:inference}}

(i) {\bf Proof of $\|\mathbf{u}^*-\mathbf{u}^{(p)}\|_2^2=o_{\P_{\mu^*}}(p).$}
We break the proof into the following three steps:

\begin{enumerate}
\item[Step (1).]
 $$   \Big|  \M_p(\mathbf{u}^{(p)}, \mathbf{w}, \mu^{(p)}) - \sup_{\mathbf{u} \in [-1,1]^p} \M_p(\mathbf{u}, \mathbf{w}, \mu^*)  \Big| = o_{\P_{\mu^*}}(p). $$

\begin{proof}
Since $\mathrm{Tr}(A^2) = o(p)$, and $\mathbf{u}^{(p)}$ is a global optimizer for $\M_p(\cdot, \mathbf{w}, \mu^{(p)})$, Theorem \ref{lem:approx} part (a) and \eqref{eq:M_mean_param} imply 
\begin{align*} 
\sup_{\mathbf{u} \in [-1,1]^p} \M_p(\mathbf{u}, \mathbf{w}, \mu^*)  =\log Z_p(\mathbf{w},\mu^*)+o_{\P_{\mu^*}}(p), \quad 
 \M_p(\mathbf{u}^{(p)}, \mathbf{w}, \mu^{(p)}) =\log Z_p(\mathbf{w},\mu^{(p)})+o_{\P_{\mu^*}}(p).
\end{align*}
Also, using Lemma~\ref{lem:suff}, Lemma \ref{lem:non-asymptotic-stability}, \eqref{eq:gg}, \eqref{eq:mpiy} and the assumption $\mu^{(p)}\stackrel{w}{\to}\mu^*$ we conclude that 
\begin{align*}
     \log Z_p(\mathbf{w},\mu^{(p)})=\log Z_p(\mathbf{w},\mu^*)+o_{\P_{\mu^*}}(p).
\end{align*}
The desired conclusion follows on combining the above two displays.
\end{proof}

\item[Step (2).]
      $$  \Big|\M_p({\mathbf{u}}^{(p,K)}, \mathbf{w}, \mu^{(p)}) - \M_p(\mathbf{u}^{(p)}, \mathbf{w}, \mu^{(p)})\Big|=\er(p).$$

\begin{proof}
Recall from~\eqref{eq:G_defn} that $G_{\mu,d}(u)=\dkl(\mu_{h_{\mu,d}(u),d}\|\mu_{0,d})$.  
Let $\mathcal{S}(p,K):=\{i\in [p]:|\tau^{(p)}_i|\le K\}$, and use Lemma \ref{lem:dev_bounds} to conclude that $|\mathcal{S}(p,K)^c|=\er(p)$. Define $\mathbf{u}^{(p,K)}\in [-1,1]^p$ as
\begin{align*}
    {u}^{(p,K)}_i = 
    \begin{cases}
        {u}^{(p)}_i &\mathrm{ if } \,\, i\in  \mathcal{S}(p,K), \\
        \mathbb{E}_{\mu^{(p)}_{0,d_i}}[X] & \mathrm{if} \,\, i \in \mathcal{S}(p,K)^c. 
    \end{cases}
\end{align*}
Thus  $G_{\mu^{(p)},d_i}({u}^{(p,K)}_i)$ equals $G_{\mu^{(p)},d_i}({u}^{(p)}_i)$ if $i\in \mathcal{S}(p,K)$, and equals $ 0$ if $i\in  \mathcal{S}(p,K)^c$.
Combining, for all $i\in [p]$ we have $$G_{\mu^{(p)},d_i}({u}^{(p,K)}_i) \le G_{\mu^{(p)},d_i}(u^{(p)}_i).$$ 

Since $\mathbf{u}^{(p)}$ is a global optimizer of $\mathcal{M}_
p(\cdot, \mathbf{w}, \mu^{(p)})$
using \eqref{eq:M_mean_param} and the display above we have  
\begin{align*}
0&\leq    \mathcal{M}_
p(\mathbf{u}^{(p)}, \mathbf{w}, \mu^{(p)}) -   \mathcal{M}_
p({\mathbf{u}}^{(p,K)} , \mathbf{w}, \mu^{(p)}) \\ &\leq    -\frac{1}{2} (\mathbf{u}^{(p)})^{\top} A \mathbf{u}^{(p)} + (\mathbf{u}^{(p)})^{\top} \mathbf{w} + \frac{1}{2} ({\mathbf{u}}^{(p,K)} )^{\top} A {\mathbf{u}}^{(p,K)} -  ({\mathbf{u}}^{(p,K)} )^{\top} \mathbf{w}\\
&=O_{\P_{\mu^*}}(\sqrt{p})  \| \mathbf{u}^{(p)} - \mathbf{u}^{(p,K)}\|_2,\nonumber 
 \end{align*}
 where the last line uses the fact that $\lambda_{\max}(A)=O(1)$ (from \eqref{eq:eigen}) and $\|\mathbf{w}\|_2^2=O_{\P_{\mu^*}}(p)$ (from Lemma \ref{lem:w}), along with Cauchy-Schwarz inequality. 
  The desired conclusion now follows from Lemma \ref{lem:approximations}.
 \end{proof}
    
 \item[Step (3).] 
For any $K>0$, we have
 \begin{align*}
        \Big|  \M_p({\mathbf{u}}_*^{(p,K)}, \mathbf{w}, \mu^*) -\M_p({\mathbf{u}}^{(p,K)}, \mathbf{w}, \mu^{(p)}) \Big|=o_{\P_{\mu^*}}(p).
 \end{align*}
 \begin{proof}
Using the bounds $\lambda_{\max}(A)=O(1)$ and $\|\mathbf{w}\|_2^2 = O_{\P_{\mu^*}}(p)$ (as in the previous step) and Lemma \ref{lem:approximations}, using \eqref{eq:M_mean_param} we have, 
 \begin{align}
       \M_p({\mathbf{u}}^{(p,K)}, \mathbf{w}, \mu^{(p)}) - \M_p({\mathbf{u}}_*^{(p,K)}, \mathbf{w}, \mu^*) =  \sum_{i=1}^{p} (G_{\mu^*,d_i}({u}_{*i}^{(p,K)}) - G_{\mu^{(p)},d_i}({u}^{(p,K)}_i) ) + o_{\P_{\mu^*}}(p). \label{eq:int_target_22}
 \end{align}
Now, using the definition of $G_{\mu,d}(\cdot)$ \eqref{eq:G_defn} and the definitions of ${\mathbf{u}}^{(p,K)}$ and ${\mathbf{u}}_*^{(p,K)}$, we have, 
\begin{align}
    &\sum_{i=1}^{p} \left(G_{\mu^{(p)},d_i}({u}^{(p,K)}_i) - G_{\mu^*,d_i}({u}_{*i}^{(p,K)} ) \right) \;=\;  \sum_{i=1}^{p} \tau_i^{(p,K)} ({u}^{(p,K)}_i - {u}_{*i}^{(p,K)})  \nonumber \\
   & \hspace{1in} -  \sum_{i=1}^{p}\Big(c_{\mu^{(p)}}(\tau_i^{(p,K)}, d_i) - c_{\mu^*}(\tau_i^{(p,K)}, d_i) \Big) + \sum_{i=1}^{p} (c_{\mu^{(p)}}(0,d_i) - c_{\mu^*}(0,d_i)). \nonumber 
\end{align}
Using \eqref{eq:eigen} we get $C_1 < d_i < C_2$ for all $i\in [p]$. Noting that $|\tau_i^{(K)}| \leq K$, triangle inequality implies
\begin{align*}
&\Big|\sum_{i=1}^{p} (G_{\mu^{(p)},d_i}({u}^{(p,K)}_i) - G_{\mu^*,d_i}({u}^{(p,K)}_{*i}) )   \Big|\\
&\leq K \sqrt{p}\|\mathbf{u}^{(p,K)} - {\mathbf{u}}_*^{(p,K)}\|_2  + 2 p \sup_{|a| \leq K, C_1 \leq d \leq C_2} |c_{\mu^{(p)}}(a,d)- c_{\mu^*}(a,d)|=o_{\P_{\mu^*}}(p), 
\end{align*}
where the last equality uses Lemma \ref{lem:approximations}, along with assumption $\mu^{(p)}\stackrel{w}{\to} \mu^*$. The desired result follows on using the
last display along with  \eqref{eq:int_target_22}.  
\end{proof}

Combining the three steps above we conclude 
\begin{align*}
\M_p({\mathbf{u}}_*^{(p,K)}, \mathbf{w}, \mu^*)-   \sup_{\mathbf{u} \in [-1,1]^p} \M_p(\mathbf{u}, \mathbf{w}, \mu^*)=\er(p). 
\end{align*}
The well-separated optimizer property of $\M_p(\cdot, \mathbf{w}, \mu^*)$ (see Definition~\ref{defn:separation_property}) then gives
\begin{align*}
    \| {\mathbf{u}}_*^{(p,K)} - \mathbf{u}^*\|_2^2=\er(p).  
\end{align*}
The proof of the first assertion is complete by another application of Lemma~\ref{lem:approximations}.
\end{enumerate}

 {\bf Proof of $\|\mathbf{u}^*-\widetilde{\mathbf{u}}\|_2^2=o_{\P_{\mu^*}}(p).$} Define a random vector $\mathbf{b}\in [-1,1]^p$ by setting \begin{align}\label{eq:defb}
 b_i:=\E_{\P_{\mu^*}}[ \beta_i \mid \beta_{-i}, \mathbf{y}],
 \end{align}
  and use \citet[Theorem 7 (ii)]{mukherjee2022variational} along with the well-separated assumption to conclude that
  \begin{align}\label{eq:bconc}
  \|\mathbf{b}-\mathbf{u}
^*\|_2^2=o_{\P_{\mu^*}}(p).  \end{align}
Define a random variable $\zeta_p$ by first picking $I$ uniformly from $[p]=\{1,2,\ldots,p\}$, and then set $\zeta_p:=b_I-u^*_I$. By a slightly abuse of notation, let $\P_{\mu^*}$ denote the joint distribution of $(\mathbf{y},\bbeta,I)$. Then \eqref{eq:bconc} is equivalent to $\E_{\P_{\mu^*}}[\zeta_p^2 \mid \mathbf{y}, \bbeta]\stackrel{\P_{\mu^*}}{\to}0.$
Since $|\zeta_p|\le 2$, dominated convergence theorem implies
$\E_{\P_{\mu^*}}[\zeta_p^2]\to 0$. Consequently, Jensen's inequality gives  $$\E_{\P_{\mu^*}}\Bigg[\Big(\E_{\P_{\mu^*}}[\zeta_p|\mathbf{y},I]\Big)^2\Big|\;\mathbf{y}\Bigg]\le \E_{\P_{\mu^*}}\Bigg[\Big(\E_{\P_{\mu^*}}[\zeta_p^2|\mathbf{y},I]\Big)\Big|\;\mathbf{y}\Bigg]=\E_{\P_{\mu^*}}[\zeta_p^2 \mid \mathbf{y}]\stackrel{\P_{\mu^*}}{\to}0.$$
Noting that  $$\E_{\P_{\mu^*}}[\zeta_p \mid \mathbf{y},I=i]=\E_{\P_{\mu^*}}[b_i \mid \mathbf{y}]-u^*_i=\E_{\P_{\mu^*}}[\beta_i \mid \mathbf{y}]-u^*_i=\widetilde{u}_i-u^*_i,$$
the last display is equivalent to 
$\|\widetilde{\bf u}-\mathbf{u}^*\|_2^2=o_{\P_{\mu^*}}(p),$
which is the desired conclusion. 
\\

(ii)
 To begin, set
 $f(\bbeta):=-\frac{1}{2}\bbeta^\top A\bbeta+{\bf w}^\top \bbeta$, where $A$ and ${\bf w}$ are as in
 Definition \ref{def:Z}, and note that the posterior distribution of $\bbeta$ given ${\bf y}$ is given by
 $$\frac{d\P_{\mu^*}(\cdot \mid \mathbf{y})}{\prod_{i=1}^pd\mu_{0,d_i}}(\bbeta)=\frac{1}{Z_p(\mathbf{w},\mu^*)} \exp(f(\bbeta)).$$
Then we claim that
\begin{align}\label{eq:claim11}
\E_{\P_{\mu^*}}[f(\bbeta) \mid \mathbf{y}]=f(\widetilde{\mathbf{u}}) +o_{\P_{\mu^*}}(p). 
\end{align}
We now complete the proof of (ii), deferring the proof of the claim. To this effect, set 
$\mathbb{Q}:=\prod_{i=1}^p \mu^*_{\widetilde{\tau}_i,d_i},$
and note that with $\mathcal{M}_p(\cdot,\cdot,\cdot)$ as defined in \eqref{eq:M_mean_param}, a direct calculation gives
\begin{align*}
    \dkl(\P_{\mu^*}(\cdot \mid \mathbf{y})\| \mathbb{Q})&
    =\E_{\P_{\mu^*}}[f(\bbeta) \mid \mathbf{y}]-\sum_{i=1}^p\Big[\widetilde{\tau}_i\E_{\P_{\mu^*}}[\beta_i \mid \mathbf{y}]- c_{\mu^*}({\widetilde{\tau}}_i,d_i)+c_{\mu^*}(0,d_i)\Big]-\log Z_p(\mathbf{w},\mu^*)\\
    &=f(\widetilde{\bf u})-\sum_{i=1}^p\Big[\widetilde{\tau}_i\E_{\P_{\mu^*}}[\beta_i \mid \mathbf{y}]- c_{\mu^*}({\widetilde{\tau}}_i,d_i)+c_{\mu^*}(0,d_i)\Big]-\log Z_p(\mathbf{w},\mu^*)+o_{\P_{\mu^*}}(p)\\
    &= \mathcal{M}_p(\widetilde{\bf u}, \mathbf{w},\mu^*) -\log Z_p(\mathbf{w},\mu^*)+o_{\P_{\mu^*}}(p)\\
    &\le \sup_{{\bf u}\in [-1,1]^p} \mathcal{M}_p({\bf u}, \mathbf{w},\mu^*) -\log Z_p(\mathbf{w},\mu^*)+o_{\P_{\mu^*}}(p)=o_{\P_{\mu^*}}(p).
\end{align*}
    In the above display, the second line uses \eqref{eq:claim11}, and the last equality uses Theorem \ref{lem:approx} part (a). 
    
    To complete the proof of part (ii), we need to verify \eqref{eq:claim11}. To this effect, note that
    $$|\bbeta^\top A \mathbf{b} -\bbeta^\top A\widetilde{\bf u}|\le \|\bbeta\|_2 \|A\|_2 \|\mathbf{b}-\widetilde{\bf u}\|_2\le \sqrt{p} \|A\|_2 \|\mathbf{b}-\widetilde{\bf u}\|_2= o_{\P_{\mu^*}}(p),$$
    where we use \eqref{eq:eigen}, along with the bound $\|\mathbf{b}-\widetilde{\mathbf{u}}\|_2^2=o_{\P_{\mu^*}}(p)$ (follows by combining \eqref{eq:bconc} along with part (i)). An application of the dominated convergence theorem 
    on the conditional probability space $\P_{\mu^*}(\cdot \mid \mathbf{y})$ then 
    gives $$ \E_{\mathbb{P}_{\mu^*}}[\bbeta^\top A \mathbf{b} \mid \mathbf{y}] -\E_{\mathbb{P}_{\mu^*}}[\bbeta^\top A\widetilde{\mathbf u} \mid \mathbf{y}]=o_{\P_{\mu^*}}(p).$$
    Also, a direct computation gives
    $$\mathbb{E}_{\P_{\mu^*}}[\bbeta^\top A \bbeta \mid \mathbf{y}]=\mathbb{E}_{\P_{\mu^*}}[\bbeta^\top A \mathbf{b} \mid \mathbf{y}], \quad \text{ and }\quad \mathbb{E}_{\P_{\mu^*}}[\bbeta^\top A \widetilde{\bf u} \mid \mathbf{y}]=\widetilde{\bf u}^\top A \widetilde{\bf u}.$$
   Combining the above two displays, we conclude
    $$\E_{\P_{\mu^*}}[\bbeta^\top A \bbeta \mid \mathbf{y}]-\widetilde{\bf u}^\top A \widetilde{\bf u}=o_{\P_{\mu^*}}(p),$$
    from which \eqref{eq:claim11} follows on recalling that $\E_{\P_{\mu^*}}[\bbeta \mid \mathbf{y}]=\widetilde{\bf u}.$
 \\

(iii) 
The first conclusion follows from part (ii) along with Marton's transportation-entropy inequality (see~\citet{MR0838213,MR1404531}). Proceeding to verify the second assertion, using the first conclusion along with triangle inequality, it suffices to show that
\begin{align}\label{eq:aa0} \frac{1}{p} \, d_{W_1}\Big(\prod_{i=1}^p\mu^*_{\widetilde{\tau}_i,d_i},\prod_{i=1}^p \mu^{(p)}_{\tau_i^{(p)},d_i}  \Big)=\frac{1}{p}\sum_{i=1}^{p} d_{W_1}\Big(\mu^*_{\widetilde{\tau}_i,d_i}, \mu^{(p)}_{\tau_i^{(p)},d_i}  \Big)=o_{\P_{\mu^*}}(1).
\end{align}
To this effect, note that since $\mathbf{u}^{(p)}$ is a global optimizer of $\mathcal{M}_p(\cdot, \mathbf{w},\mu^{(p)})$, an application of Lemma \ref{lem:dev_bounds} gives
\begin{align}\label{eq:claim21}
|\{i\in [p]:|\tau^{(p)}_i|>K\}|=\er(p).
\end{align}
Also, if $\mathbf{v}^* = (v_1^*,\ldots, v_p^*)$ denotes a global optimizer of the function $\mathcal{M}_p(\cdot,\mathbf{w},\mu^*)$, then another application of Lemma \ref{lem:dev_bounds} gives
\begin{align*}
    |\{i\in [p]:|h_{\mu^*,d_i}(v_i^*)|>K\}|=\er(p).
\end{align*}
By the well-separation property of $\mathcal{M}_p(\cdot,\mathbf{w},\mu^*)$ we have $\|\mathbf{v}^*-\mathbf{u}^*\|_2^2=o_{\P_{\mu^*}}(p)$, which along with part (i) and triangle inequality gives $\|\mathbf{v}^*-\widetilde{\mathbf{u}}\|_2^2=o_{\P_{\mu^*}}(p)$. Also recall that $\widetilde{\tau}_i=h_{\mu^*,d_i}(\widetilde{u}_i)$, and note that the function $h_{\mu^*,\cdot}(\cdot)$ is   continuous on the (non-compact) set
$(\inf{\rm supp}(\mu), \sup {\rm supp} (\mu))\times [C_1,C_2]$ (by part (iv) of Lemma \ref{lem:unif_cont}), and so the set $$\{(v,d)\in (\inf{\rm supp}(\mu^*), \sup {\rm supp} (\mu^*))\times [C_1\times C_2]: |h_{\mu^*,d}(v)|\le K\}$$
is compact. In particular, there exists $\eta=\eta_{K,\mu^*}$ such that $$|h_{\mu^*,d}(v)|\le K\Rightarrow  \inf{\rm supp}(\mu^*)-\eta\le v\le  \sup {\rm supp} (\mu^*)+\eta.$$
Fix $i\in [p]$ such that \[|h_{\mu^*,d_i}(v_i^*)|\le K,\text{ and }|v_i^*-\widetilde{u}_i|\le \frac{\eta}{2}.\]
Then we have \[\inf{\rm supp}(\mu^*)-\frac{\eta}{2}\le \widetilde{u}_i\le  \sup {\rm supp} (\mu^*)+\frac{\eta}{2} \]
 The above display gives, on recalling that $\widetilde{\tau}_i=h_{\mu^*,d_i}(\widetilde{u}_i)$, gives
\begin{align}\label{eq:claim22}
|\{i\in [p]:|\widetilde{\tau}_i|>K\}|=\er(p).
\end{align}
Using \eqref{eq:claim21} and \eqref{eq:claim22} and setting
$$\mathcal{S}(p,K):=\{i\in [p]:|\tau^{(p)}_i|\le K, |\widetilde{\tau}_i|\le K\},$$
we have $|\mathcal{S}(p,K)^c|=\er(p)$, and so 
\begin{align}\label{eq:aa1}\sum_{i=1}^{p} d_{W_1}\Big(\mu^*_{\widetilde{\tau}_i,d_i}, \mu^{(p)}_{\tau_i^{(p)},d_i}  \Big)\le \er(p)+\sum_{i\in \mathcal{S}(p,K)}d_{W_1}\Big(\mu^*_{\widetilde{\tau}_i,d_i}, \mu^{(p)}_{\tau_i^{(p,K)},d_i}  \Big),
\end{align}
where $\tau_i^{(p,K)}=\tau_i1\{|\tau_i|\le K\}$ is as in Lemma \ref{lem:approximations}. 
We now claim that for any $K>0$ we have
\begin{align}\label{eq:claim23}
    \sum_{i\in \mathcal{S}(p,K)}(\widetilde{\tau}_i-\tau^{(p,K)}_i)^2=o_{\P_{\mu^*}}(p).
\end{align}
We first complete the proof, given \eqref{eq:claim23}. Given \eqref{eq:claim23}, there exists a sequence of positive reals $\varepsilon_p$ satisfying $\lim_{p\to\infty}\varepsilon_p=0$, such that
$$\Big|\{i\in \mathcal{S}(p,K):|\widetilde{\tau}_i-\tau^{(p,K)}_i|>\varepsilon_p\}\Big|=o_{\P_{\mu^*}}(p).$$
Also, with $\mathcal{P}$ denoting the parameter space, the function $(\mu,\tau,d)\mapsto \mu_{\tau,d}$ is continuous on the compact set $\mathcal{P}\times [-K,K]\times [C_1,C_2]$ with respect to weak topology (invoking Lemma \ref{lem:unif_cont} part (ii)), and hence the Wasserstein metric $d_{W_1}(\cdot,\cdot)$, and so it is uniformly continuous. Consequently, the above display, along with the assumption $\mu^{(p)}\stackrel{w}{\to}\mu^*$, gives
$$\sum_{i\in \mathcal{S}(p,K)}d_{W_1}\Big(\mu^*_{\widetilde{\tau}_i,d_i}, \mu^{(p)}_{\tau_i^{(p,K)},d_i}  \Big)=o_{\P_{\mu^*}}(p).$$
Combining \eqref{eq:aa1} and the above display, the desired conclusion \eqref{eq:aa0} follows.
\\

To complete the proof, it remains to verify the claim \eqref{eq:claim23}. For this, recalling that $$\widetilde{\bm \tau}=h_{\mu^*,{\bf d}}(\widetilde{\bf u})\qquad \text{ and }\qquad {\bm \tau}^{(p,K)}= h_{\mu^*,{\bf d}}({\bf u}_*^{(p,K)}), $$
it suffices to show that $\|\widetilde{\bf u}-{\bf u}_*^{(p,K)}\|_2^2=o_{\P_{\mu^*}}(p).$ But this follows from Lemma \ref{lem:approximations} and part (i), combined with triangle inequality.

\subsection{Proof of Corollary \ref{cor:takeaways}}


\begin{itemize}
\item[(i)]
Without loss of generality assume $L=1$. 
By Theorem \ref{thm:inference} (iii), there exists a coupling of the random variables $\bbeta\sim \mathbb{P}_{\mu^*}(\cdot \mid \mathbf{y})$ and $\boldsymbol{\zeta}\sim\prod_{i=1}^{p}\mu^{(p)}_{\tau_i^{(p)},d_i}$ under which
$\|\bbeta-\boldsymbol{\zeta}\|_1=o_{\P_{\mu^*}}(p).$ Then we have 
\begin{align*}
&\Big|\frac{1}{p}\sum_{i=1}^pf_{p,i}(\beta_i)-\frac{1}{p}\sum_{i=1}^p \E_{\mu^{(p)}_{\tau_i^{(p)},d_i}}[f_{p,i}(\zeta_i)]\Big|\\
\le& \Big|\frac{1}{p}\sum_{i=1}^pf_{p,i}(\beta_i)-\frac{1}{p}\sum_{i=1}^pf_{p,i}(\zeta_i) \Big|+\Big|\frac{1}{p}\sum_{i=1}^pf_{p,i}(\zeta_i)-\frac{1}{p}\sum_{i=1}^p \E_{\mu^{(p)}_{\tau_i^{(p)},d_i}}[f_{p,i}(\zeta_i)]\Big|\\
\le &\frac{1}{p}\sum_{i=1}^p|\beta_i-\zeta_i|+\Big|\frac{1}{p}\sum_{i=1}^pf_{p,i}(\zeta_i)-\frac{1}{p}\sum_{i=1}^p \E_{\mu^{(p)}_{\tau_i^{(p)},d_i}}[f_{p,i}(\zeta_i)]\Big|.
\end{align*}
The first term above is $o_{\P_{\mu^*}}(1)$ by construction of the coupling, and the second term above is $o_{\P_{\mu^*}}(1)$ by
the bounded difference inequality. 
This completes the proof.

\item[(ii)] Fix $\alpha \in (0,1)$, and let { $\mathcal{I}_j = (q_j^{(\alpha/2)}, q_j^{(1-\alpha/2)} )$}, where $q_j^{(\alpha/2)}$ and $q_j^{(1-\alpha/2)}$ are the $\alpha/2$ and $1-\alpha/2$ quantiles of $\mu^{(p)}_{\tau_j^{(p)},d_j}$. 
For $\varepsilon>0$, let $\mathcal{I}_j^{\varepsilon}$ be a $\varepsilon$-fattening of $\mathcal{I}_j$. Then there exists a bounded, continuous Lipschitz function $f_{p,j}$ (whose Lipschitz constant depends only on $\varepsilon$) such that $f_{p,j}$ is $1$ on $\bar{\mathcal{I}}_j$, and $0$ on $(\mathcal{I}_j^{\varepsilon})^c$. 
Observe now that, by construction,  $$\sum_{j=1}^{p} \mathbf{1}(\beta_j \in \mathcal{I}_j^{\varepsilon}) \geq \sum_{j=1}^{p} f_{p,j}(\beta_j),\quad \text{ and } \quad \mathbb{E}_{\mu^{(p)}_{\tau^{(p)}_j,d_j}}[f_{p,j}(\beta_j)] \geq \mu^{(p)}_{\tau^{(p)}_j,d_j}(\beta_j \in \mathcal{I}_j) \geq 1- \alpha.$$ The desired conclusion now follows from part (i) and the display above. 


\item[(iii)] The Bayes optimal estimator under the $L^2$ loss is the vector of marginal expectations $\int \bbeta \; \mathrm{d} \mathbb{P}_{\mu^*}(\beta \mid  \mathbf{y})$, with 
 minimum mean-square error (MMSE) 
\begin{align}
\mathrm{MMSE} = \frac{1}{p} \sum_{j=1}^{p} \mathbb{E}_{\P_{\mu^*}} \Big[ \int \Big(\beta_j - \int \beta_j \,\mathrm{d}\mathbb{P}_{\mu^*}(\beta \mid  \mathbf{y} ) \Big)^2 \mathrm{d}\mathbb{P}_{\mu^*}(\bbeta \mid \mathbf{y}) \Big]. \nonumber 
\end{align} 
It suffices to establish that  $$\frac{1}{p} \sum_{j=1}^{p} \mathbb{E}_{\P_{\mu^*}}\Big[ \int (\beta_j - \hat{\beta}_j)^2 \mathrm{d}\mathbb{P}_{\mu^*}(\beta \mid \mathbf{y}) \Big] - \mathrm{MMSE} \to 0.$$

Using part (i) of this corollary (with $f_{p,i}(\beta_i) = (\beta_i - \hat \beta_i)^2/2$) we have, 
\begin{align}
 \Big| \frac{1}{p} \sum_{j=1}^{p} (\beta_j - \hat{\beta}_j)^2  - \frac{1}{p} \sum_{j=1}^{p} \mathrm{Var}_{\mu^{(p)}_{\tau_j^{(p)},d_j}}(\beta_j)  \Big|\stackrel{\mathbb{P}_{\mu^*}}{\to} 0. \nonumber 
\end{align} 
Using the dominated convergence theorem, we conclude, 
\begin{align}
\frac{1}{p} \sum_{j=1}^{p} \mathbb{E}_{\P_{\mu^*}} \Big[ \int (\beta_j - \hat{\beta}_j)^2 \mathrm{d}\mathbb{P}_{\mu^*}(\beta_j | \mathbf{y} ) \Big] - \frac{1}{p} \sum_{j=1}^{p} \mathbb{E}_{\P_{\mu^*}} \Big[  \mathrm{Var}_{\mu^{(p)}_{\tau_j^{(p)},d_j}}(\beta_j) \Big]  \to 0. \label{eq:int3} 
\end{align}
On the other hand, let $\bbeta^{(1)}, \bbeta^{(2)}$ be two independent samples from the posterior $\mathbb{P}_{\mu^*}(\cdot \mid \mathbf{y})$. Again invoking part (i) of this corollary, we get
\begin{align}
 \Big| \frac{1}{2 p} \sum_{j=1}^{p} (\beta_{j}^{(1)} - \beta_{j}^{(2)})^2 - \frac{1}{2p} \sum_{j=1}^{p} \mathbb{E}_{\mu^{(p)}_{\tau_j^{(p)},d_j}} (\beta_{j}^{(1)} - \beta_{j}^{(2)})^2  \Big|\stackrel{\P_{\mu^*}}{ \to} 0. \nonumber 
\end{align} 
Using the dominated convergence theorem, we get
\begin{align} 
\frac{1}{p} \sum_{j=1}^{p} \mathbb{E}_{\mu^*} \Big[ \int (\beta_j - \int \beta_j \mathrm{d}\mathbb{P}_{\mu^*}(\beta_j \mid \mathbf{y}))^2 \mathrm{d}\mathbb{P}_{\mu^*}(\beta_j \mid \mathbf{y})  \Big] - \frac{1}{p} \sum_{j=1}^{p} \mathbb{E}_{\mu^*} \Big[  \mathrm{Var}_{\mu^{(p)}_{\tau_j^{(p)},d_j}}(\beta_j) \Big] \to 0. \label{eq:int4} 
\end{align} 

The proof is complete upon combining \eqref{eq:int3} and \eqref{eq:int4}. \qed

\end{itemize} 



\appendix

\section{Proof of results from Section \ref{sec:Proofs}}\label{Appendix-A}

In this section we will prove Lemma \ref{lem:non-asymptotic-stability}, Lemma \ref{lem:exp_concentration}, Lemma \ref{lemma:covering}, and Example \ref{eg:eigen}. For proving Lemma \ref{lem:non-asymptotic-stability}, we will need the following simple lemma.
\begin{lemma}\label{lem:lower_bound}
With $\Psi_{\mu,\upsilon^2}(\cdot)$ as in \eqref{eq:Upsilon}, we have
 $$\Psi_{\mu,\upsilon^2}(\theta)\ge  \frac{1}{\sqrt{2\pi \upsilon^2}} e^{-\frac{9}{8\upsilon^2}(\theta^2+4)  }.$$
\end{lemma}

 \begin{proof}[Proof of Lemma \ref{lem:lower_bound}]
 If $\theta\in [-2,2]$, then for $\beta\in [-1,1]$, we have $(\theta-\beta)^2\le 9$, and so we have the bound
 \begin{align*}
   \int_{-1}^1e^{-\frac{1}{2\upsilon^2}(\theta-\beta)^2}d\mu(\beta)\ge e^{-9/(2\upsilon^2)}.
 \end{align*}
 On the other hand, if $|\theta|>2$, then for $\beta\in [-1,1]$ we have $(\theta-\beta)^2\le \frac{9\theta^2}{4}$, and so
  \begin{align*}
   \int_{-1}^1 e^{-\frac{1}{2
\upsilon^2}(\theta-\beta)^2}d\mu(\beta)\ge e^{-9\theta^2 /(8\upsilon^2)}.
 \end{align*}
To get a lower bound that holds for any $\theta \in \mathbb{R}$, we multiply the lower bounds obtained in the two cases above; this is a valid general lower bound as each bound is less than 1.
\end{proof}

\begin{proof}[Proof of Lemma~\ref{lem:non-asymptotic-stability}]
In the proof of this lemma we will need to modify $\Sigma$, and so we will explicitly point out the dependence of $\Sigma$ in $\widetilde{m}_{\mu,\Sigma}(\cdot)$ and $\widetilde{g}({\bf v},\mu,\Sigma)$.
Computing the densities from the two different representations of the random variable ${\bf z}$ in \eqref{eq:modified_sequence}, it follows that for any $\mu\in \mathcal{P}$ we have $\widetilde{m}_{\mu,\Sigma}({\bf v})=\widetilde{m}_{\mu\star N(0,\upsilon^2), \bar{\Sigma}}({\bf v})$. Using \eqref{eq:gg}, this gives 
\begin{align*}
    \widetilde{g}(\mathbf{v},\mu_1,\Sigma) - \widetilde{g}(\mathbf{v}, \mu_2, \Sigma) 
    = \widetilde{g}(\mathbf{v}, \mu_1 \star N(0,\upsilon^2),\bar{\Sigma}) - \widetilde{g} (\mathbf{v}, \mu_2 \star N(0, \upsilon^2),\bar{\Sigma})\end{align*}
   Using the formula for $\widetilde{g}$ in \eqref{eq:gh}, and noting that $\mu\star N(0,\upsilon^2)$ has density $\Psi_{\mu,\upsilon^2}$ (see \eqref{eq:Upsilon}) we have
   \begin{align}\label{eq:gh33}
   \notag e^{\widetilde{g}({\bf v},\mu_1\star N(0,\upsilon^2),\bar{\Sigma})}=&
  \int e^{-\widetilde{h}(\mathbf{v},{\bm \theta},\bar{\Sigma})} \prod_{i=1}^{p} \Psi_{\mu_1,\upsilon^2}(\theta_i) d{\bm \theta}\\
   \notag =& \int_{\widetilde{h}(\mathbf{v}, {\bm \theta}, \bar{\Sigma})\leq CKp}e^{-\widetilde{h}(\mathbf{v},{\bm \theta}, \bar{\Sigma})} \prod_{i=1}^{p} \Psi_{\mu_1,\upsilon^2}(\theta_i) d{\bm \theta}\\
    & \hspace{0.5in} + \int_{\widetilde{h}(\mathbf{v}, {\bm \theta}, \bar{\Sigma})> CKp}e^{-\widetilde{h}(\mathbf{v},{\bm \theta}, \bar{\Sigma})} \prod_{i=1}^{p} \Psi_{\mu_1,\upsilon^2}(\theta_i) d{\bm \theta},
\end{align}
where $K$ is as in the statement of the Lemma, and $C$ is a constant to be specified later on.
We note that if $\widetilde{h}(\mathbf{v}, {\bm \theta},\bar{\Sigma}) \leq  CK p $, using \eqref{eq:eigen33} and \eqref{eq:gh}  we have 
\begin{align}
    \frac{C_1}{2}\|\mathbf{v} - {\bm \theta}\|_2^2 \leq \widetilde{h}(\mathbf{v}, {\bm \theta},\bar{\Sigma}) \leq C Kp,\nonumber 
\end{align}
which, in turn, implies that $$\|{\bf v}-{\bm \theta}\|_2^2\le \frac{2CKp}{C_1}\qquad \Rightarrow \qquad \|{\bm \theta}\|_2^2\le 2Kp\Big[1+ \frac{2 C}{C_1}\Big],$$
where the last inequality uses the bound $\|{\bf v}\|_2^2\le Kp$. Then, using \eqref{eq:Psi} the first term in the RHS of \eqref{eq:gh33} can be bounded as follows:
\begin{align*}
   & \int_{\widetilde{h}(\mathbf{v}, {\bm \theta},\bar{\Sigma})\leq CKp}e^{-\widetilde{h}(\mathbf{v},{\bm \theta},\bar{\Sigma})} \prod_{i=1}^{p} \Psi_{\mu_1,\upsilon^2}(\theta_i) d{\bm \theta}\\
   \le &\int_{\widetilde{h}(\mathbf{v}, {\bm \theta},\bar{\Sigma})\leq CKp}e^{-\widetilde{h}(\mathbf{v},{\bm \theta},\bar{\Sigma})} \prod_{i=1}^{p} \Psi_{\mu_2,\upsilon^2}(\theta_i) e^{(p+\|{\bm \theta}\|_2^2)d_{\upsilon^2}(\mu_1,\mu_2)} d{\bm \theta}\\
    &\leq e^{\widetilde{g}({\bf v},\mu_2\star N(0,\upsilon^2),\bar{\Sigma})} e^{p d_{\upsilon^2}(\mu_1,\mu_2)\Big(1+2K \Big[1+\frac{2C}{C_1}\Big]\Big)},
\end{align*}
where the last display uses the bound on $\|{\bm \theta}\|_2^2$ obtained above. Proceeding to control the second term in the RHS of \eqref{eq:gh33}, note that  
\begin{align*}
    \int_{\widetilde{h}(\mathbf{v}, \boldsymbol{\theta}, \bar{\Sigma})> CKp} e^{-\widetilde{h}(\mathbf{v}, \boldsymbol{\theta},\bar{\Sigma}) } \prod_{i=1}^{p} \Psi_{\mu_1, \upsilon^2}(\theta_i) d\boldsymbol{\theta} \leq e^{-CKp}\int_{\R^p} \prod_{i=1}^{p} \Psi_{\mu_1, \upsilon^2}(\theta_i) d\boldsymbol{\theta}=e^{-CKp}.  
\end{align*}
Combining the last two displays, \eqref{eq:gh33} gives
\begin{align*}
    e^{\widetilde{g}({\bf v},\mu_1\star N(0,\upsilon^2),\bar{\Sigma})} \le e^{\widetilde{g}({\bf v},\mu_2\star N(0,\upsilon^2),\bar{\Sigma})} e^{p d_{\upsilon^2}(\mu_1,\mu_2)\Big(1+2K \Big[1+\frac{2C}{C_1}\Big]\Big)}+e^{-CKp}.
\end{align*}
Dividing both sides by $e^{\widetilde{g}({\bf v},\mu_2\star N(0,\upsilon^2),\bar{\Sigma})} $ and taking $\log$ gives
\begin{align}\label{eq:log}
   \notag& \widetilde{g}({\bf v},\mu_1\star N(0,\upsilon^2),\bar{\Sigma})-\widetilde{g}({\bf v},\mu_2\star N(0,\upsilon^2),\bar{\Sigma})\\
   \notag\le& \; \log \left\{ e^{p d_{\upsilon^2}(\mu_1,\mu_2)\Big(1+2K \Big[1+\frac{2C}{C_1}\Big]\Big)}+e^{-CKp-\widetilde{g}({\bf v},\mu_2\star N(0,\upsilon^2), \bar{\Sigma})}\right\}\\
   \notag\le & \; p d_{\upsilon^2}(\mu_1,\mu_2)\Big(1+2K \Big[1+\frac{2C}{C_1}\Big]\Big)+\log \left\{1+ e^{-CKp-\widetilde{g}({\bf v},\mu_2\star N(0,\upsilon^2), \bar{\Sigma}) }\right\}\\
   \le & \; p d_{\upsilon^2}(\mu_1,\mu_2)\Big(1+2K \Big[1+\frac{2C}{C_1}\Big]\Big) +e^{-CKp-\widetilde{g}({\bf v}, \mu_2\star N(0,\upsilon^2),\bar{\Sigma})}.
\end{align}
Finally, using \eqref{eq:eigen33} and \eqref{eq:gh} we get $$\widetilde{h}({\bf v},{\bm \theta},\bar{\Sigma})\le C_2\|{\bf v}-{\bm \theta}\|_2^2\le 2C_2(Kp+\|{\bm \theta}\|_2^2),$$
which gives 
\begin{align*}
   \exp(\widetilde{g}({\bf v},\mu_2\star N(0,\upsilon^2), \bar{\Sigma}))= &\int_{\R^p} e^{-\widetilde{h}({\bf v},{\bm \theta},\bar{\Sigma})} \prod_{i=1}^p \Psi_{\mu_2, \upsilon^2}(\theta_i)d{\bm \theta}\\
   \ge & \; \frac{1}{(2 \pi \upsilon^2)^{p/2}} \int_{\R^p} e^{-2C_2(Kp+\|{\bm \theta}\|_2^2)}e^{-\frac{9}{8\upsilon^2}(\|{\bm \theta}\|_2^2+4p)}d{\bm \theta}\\
   =& \;\frac{1}{(2 \pi \upsilon^2)^{p/2}} e^{-\frac{9p}{2\upsilon^2}-2C_2Kp} \int_{\R^p} e^{-\Big(2C_2+\frac{9}{8\upsilon^2}\Big) \|{\bm \theta}\|_2^2}d{\bm \theta}\\
   =& \; \frac{1}{(2 \pi \upsilon^2)^{p/2}} e^{-\frac{9p}{2\upsilon^2}-2C_2Kp} \Big(\frac{\pi}{2C_2+\frac{9}{8\upsilon^2}}\Big)^{\frac{p}{2}}
\end{align*}
where the inequality uses the above display along with Lemma \ref{lem:lower_bound}. Thus, recalling that $\upsilon^2=(2C_2)^{-1}$, choosing $C$ large enough (depending only on $K,C_2$), \eqref{eq:log} gives
\begin{align*}
     \widetilde{g}({\bf v},\mu_1\star N(0,\upsilon^2), \bar{\Sigma})- \widetilde{g}({\bf v},\mu_2\star N(0,\upsilon^2), \bar{\Sigma})\le p d_{\upsilon^2}(\mu_1,\mu_2)\Big(1+2K \Big[1+\frac{2C}{C_1}\Big]\Big) +e^{-p}.
\end{align*}
The desired bound of the lemma is immediate from this, on noting that the roles of $\mu_1,\mu_2$ are symmetric.
\end{proof}

\begin{proof}[Proof of Lemma~\ref{lem:exp_concentration}]
We note that 
\begin{align}
    g(\boldsymbol{\beta}, {\bm \varepsilon}, \mu) - g(\boldsymbol{\beta}, {\bm \varepsilon}, \mu^*) &= \log \frac{ \int e^{-h(\boldsymbol{\beta},{\bm \varepsilon}, {\bm \theta})} d\mu^{\otimes p} ({\bm \theta}) }{ \int e^{-h(\boldsymbol{\beta},{\bm \varepsilon}, {\bm \theta})}  d{\mu^*}^{\otimes p}({\bm \theta}) } \nonumber \\
    &= \log \frac{\int e^{{\bm \theta}^{\top}\Sigma^{-1} (\bar{\boldsymbol{\beta}} + {\bm \varepsilon}_2) - \frac{1}{2} {\bm \theta}^{\top} \Sigma^{-1} {\bm \theta}   } d\mu^{\otimes p}({\bm \theta})   }{
    \int e^{{\bm \theta}^{\top}\Sigma^{-1} (\bar{\boldsymbol{\beta}} + {\bm \varepsilon}_2) - \frac{1}{2} {\bm \theta}^{\top} \Sigma^{-1} {\bm \theta}   } d{\mu^*}^{\otimes p}({\bm \theta})  
    }, \nonumber
\end{align}
where we use the reparametrization \eqref{eq:modified_sequence}. Set $\bar{{\bm \varepsilon}}_2 := \bar{\Sigma}^{-1/2} {\bm \varepsilon}_2\sim N({\bf 0}, {\bf I})$,  and define a probability distribution on $[-1,1]^{p}$  by the following Radon-Nikodym derivative:
\begin{align}
\frac{dH(\bar{\boldsymbol{\beta}},\bar{{\bm \varepsilon}}_2,\mu)}{d\mu^{\otimes p}}({\bm \theta}) = \frac{e^{ {\bm \theta}^{\top} \Sigma^{-1}( \bar{\boldsymbol{\beta}} + \bar{\Sigma}^{1/2}\bar{{\bm \varepsilon}}_2 ) - \frac{1}{2} {\bm \theta}^{\top} \Sigma^{-1} {\bm \theta}  }}{\int e^{{\bm \alpha}^{\top} \Sigma^{-1}( \bar{\boldsymbol{\beta}} + \bar{\Sigma}^{1/2}\bar{{\bm \varepsilon}}_2 ) - \frac{1}{2} {\bm \alpha}^{\top} \Sigma^{-1} {\bm \alpha} } d\mu^{\otimes p}({\bm \alpha}) }. \nonumber 
\end{align}
Consider now the mapping $(\bar{\boldsymbol{\beta}}, \bar{{\bm \varepsilon}}_2) \to  g (\boldsymbol{\beta}, {\bm \varepsilon}, \mu) - g (\boldsymbol{\beta}, {\bm \varepsilon}, \mu^*)$. By a direct computation, we have 
\begin{align}
    &\frac{\partial}{\partial \bar{\boldsymbol{\beta}}} [g(\boldsymbol{\beta}, {\bm \varepsilon}, \mu) - g(\boldsymbol{\beta}, {\bm \varepsilon}, \mu^*)] = \mathbb{E}_{{\bm \theta} \sim H(\bar{\boldsymbol{\beta}}, \bar{{\bm \varepsilon}}_2,\mu)}[\Sigma^{-1} {\bm \theta}] - \mathbb{E}_{{\bm \theta} \sim H(\bar{\boldsymbol{\beta}},  \bar{{\bm \varepsilon}}_2 ,\mu^* )}[\Sigma^{-1} {\bm \theta}] , \nonumber \\
    &\frac{\partial}{\partial \bar{{\bm \varepsilon}}_2 }[g(\boldsymbol{\beta}, {\bm \varepsilon}, \mu) - g(\boldsymbol{\beta}, {\bm \varepsilon}, \mu^*)] = \mathbb{E}_{{\bm \theta} \sim H(\bar{\boldsymbol{\beta}}, \bar{{\bm \varepsilon}}_2,\mu)}[ \bar{\Sigma}^{1/2}\Sigma^{-1}  {\bm \theta}] - \mathbb{E}_{{\bm \theta} \sim H(\bar{\boldsymbol{\beta}},  \bar{{\bm \varepsilon}}_2 ,\mu^* )}[\bar{\Sigma}^{1/2}\Sigma^{-1}{\bm \theta}]. \nonumber
\end{align}
Using \eqref{eq:eigen2} we have $\lambda_{\max}(\Sigma^{-1}) \leq C_2$. Also, recalling that $\bar{\Sigma}=\Sigma-\upsilon^2{\bf I}$ gives
\begin{align}
    \| \bar{\Sigma}^{1/2}\Sigma^{-1}\|_2 = \| (\mathbf{I} - \upsilon^2 {\Sigma}^{-1})^{1/2} \Sigma^{-1/2}\|_2 \leq (1+ \upsilon^2 \lambda_{\max}(\Sigma^{-1}) ) \lambda_{\max}(\Sigma^{-1/2}) \nonumber 
 \end{align}
which depends only on $C_2$. Finally, noting that $\mu$, $\mu^*$ are supported on $[-1,1]^{p}$, we have, 
\begin{align}
  &\Big\| \frac{\partial}{\partial \bar{\boldsymbol{\beta}}}[ g (\boldsymbol{\beta}, {\bm \varepsilon}, \mu) - g (\boldsymbol{\beta}, {\bm \varepsilon}, \mu^*)]\Big\|_2^2 \leq C p, \nonumber  \\
  &\Big\| \frac{\partial}{\partial \bar{{\bm \varepsilon}}_2}[ g (\boldsymbol{\beta}, {\bm \varepsilon}, \mu) - g (\boldsymbol{\beta}, {\bm \varepsilon}, \mu^*)]\Big\|_2^2 \leq C p  \nonumber 
\end{align}
for some constant $C:= C(C_2)>0$ independent of $p$. The proof is now complete upon noting that both $\bar{\boldsymbol{\beta}}$ and $\bar{{\bm \varepsilon}}$ satisfy the Log-Sobolev inequality \citep[Corollary 1]{chen2021dimension}.
\end{proof}

\begin{proof}[Proof of Lemma~\ref{lemma:covering}] 
Fix $\varepsilon>0$, and let $\delta=\exp\Big(-\frac{c}{\varepsilon^2}\Big)$, where $c$ is to be chosen later. Let $\mathcal{N}$ be a $\delta$-net of $\mathcal{P}$ under the $L^{\infty}$ metric (see~\eqref{eq:L_infty}). For any $\tilde{\mu} \in \mathcal{P}$, let $\mu \in \mathcal{N}$ be such that $\| \Psi_{\mu,\upsilon^2} - \Psi_{\tilde{\mu}, \upsilon^2}\|_{\infty} < \delta$. Then, for any $K>0$ we have 
\begin{align}
\label{eq:bound_sup_norm}
    d_{\upsilon^2}(\mu, \tilde{\mu}) &\leq \sup_{\theta \in [-K,K]} \frac{1}{1+\theta^2} \Big| \log \frac{\Psi_{\mu,\upsilon^2}(\theta)}{\Psi_{\tilde{\mu},\upsilon^2}(\theta)} \Big| + \sup_{|\theta| > K} \frac{1}{1+\theta^2} \Big| \log \frac{\Psi_{\mu,\upsilon^2}(\theta)}{\Psi_{\tilde{\mu},\upsilon^2}(\theta)} \Big|\nonumber \\
    &\leq  \sup_{\theta \in [-K,K]} \Big| \log \frac{\Psi_{\mu,\upsilon^2}(\theta)}{\Psi_{\tilde{\mu},\upsilon^2}(\theta)} \Big| + \sup_{|\theta| > K} \frac{1}{1+\theta^2} \Big| \log \frac{\Psi_{\mu,\upsilon^2}(\theta)}{\Psi_{\tilde{\mu},\upsilon^2}(\theta)} \Big|.  
\end{align}
Using the bound $|\log x-\log y|\le \frac{|x-y|}{\min(x,y)}$ for $x,y>0$ we have
\begin{align}
    \Big| \log \frac{\Psi_{\mu,\upsilon^2}(\theta)}{\Psi_{\tilde{\mu},\upsilon^2}(\theta)} \Big| \leq \frac{|\Psi_{\mu,\upsilon^2}(\theta) - \Psi_{\tilde{\mu},\upsilon^2}(\theta)|}{\min\{\Psi_{\mu,\upsilon^2}(\theta), \Psi_{\tilde{\mu},\upsilon^2}(\theta) \}}. \nonumber 
\end{align}
For $\theta\in [-K,K]$ the denominator can be bounded below as 
\begin{align}
    \min\{ \Psi_{\mu,\upsilon^2}(\theta), \Psi_{\tilde{\mu},\upsilon^2} (\theta) \} \geq \frac{1}{\sqrt{2\pi \upsilon^2} }  \exp\Big(- \frac{(K+1)^2}{2 \upsilon^2} \Big)\ge \frac{1}{\sqrt{2\pi \upsilon^2}}{\exp\Big(-\frac{2K^2}{\upsilon^2}\Big)}, \nonumber 
\end{align}
where the last inequality holds for $K\ge 1$.
Thus we have, 
\begin{align}
\label{eq:bound_interval}
     \sup_{\theta \in [-K,K]} \Big| \log \frac{\Psi_{\mu,\upsilon^2}(\theta)}{\Psi_{\tilde{\mu},\upsilon^2}(\theta)} \Big| \leq \sqrt{2 \pi \upsilon^2} \exp\Big( \frac{2K^2}{ \upsilon^2} \Big) \delta. 
\end{align}
To control the second term in the RHS of \eqref{eq:bound_sup_norm}, for any $K>0$  and $|\theta|>K$ we have,
\begin{align}
    \Big| \log \frac{\Psi_{\mu,\upsilon^2}(\theta)}{\Psi_{\tilde{\mu}, \upsilon^2} (\theta)} \Big|= &\Big|\log \frac{\int e^{ \frac{\theta \beta}{\upsilon^2} - \frac{\beta^2}{2 \upsilon^2}} d\mu (\beta)}{\int e^{ \frac{\theta \beta}{\upsilon^2} - \frac{\beta^2}{2 \upsilon^2}} d\tilde{\mu}(\beta)} \Big| \nonumber \\
    \leq &\Big| \log \int e^{ \frac{\theta \beta}{\upsilon^2} - \frac{\beta^2}{2 \upsilon^2}} d\mu (\beta)\Big| + \Big| \log \int e^{ \frac{\theta \beta}{\upsilon^2} - \frac{\beta^2}{2 \upsilon^2}} d\tilde{\mu} (\beta)\Big|. \nonumber  
\end{align}
To control the RHS above, note that for any probability measure $\mu \in \mathcal{P}$, 
\begin{align}
  - \frac{|\theta|}{\upsilon^2} - \frac{1}{2\upsilon^2} \leq  \log \int e^{\frac{\theta\beta}{\upsilon^2} - \frac{\beta^2}{2\upsilon^2}} d\mu(\beta) \leq \frac{|\theta|}{\upsilon^2}. \nonumber 
\end{align}
In turn, this implies 
\begin{align}
    \Big| \log \frac{\Psi_{\mu,\upsilon^2}(\theta)}{\Psi_{\tilde{\mu},\upsilon^2}(\theta)} \Big| \leq 2 \frac{(2 |\theta| + 1)}{\upsilon^2}, \nonumber 
\end{align}
which gives
\begin{align}
\label{eq:bound_tail}
    \sup_{|\theta|>K} \frac{1}{1+\theta^2} \Big| \log \frac{\Psi_{\mu,\upsilon^2}(\theta)}{\Psi_{\tilde{\mu},\upsilon^2}(\theta)}\Big| &\leq \frac{2}{\upsilon^2}\sup_{|\theta|>K} \frac{2|\theta|+1}{1+\theta^2} \nonumber  \\
    &\leq  \frac{2}{\upsilon^2} \frac{2K+1}{1+K^2} \leq \frac{6}{\upsilon^2 K}
\end{align}
for $K>1$. 
Combining \eqref{eq:bound_sup_norm} with \eqref{eq:bound_interval} and \eqref{eq:bound_tail}, we have, 
\begin{align}\label{eq:delta}
    d_{\upsilon^2}(\mu, \tilde{\mu}) \leq \sqrt{2\pi \upsilon^2} \exp\Big( \frac{2K^2}{ \upsilon^2} \Big) \delta + \frac{6}{\upsilon^2 K}. 
\end{align}
If we choose $K= \frac{12}{ \upsilon^2 \varepsilon}$, then 
$$\frac{\varepsilon}{2}+ \sqrt{2\pi \upsilon^2} \exp\Big(\frac{2K^2}{\upsilon^2}\Big)=\frac{\varepsilon}{2} + \sqrt{2\pi \upsilon^2}\exp\Big(\frac{288}{\upsilon^4\varepsilon^2}\Big)\le \exp \Big(\frac{c}{\varepsilon^2}\Big),$$
for some $c$ depending only on $\upsilon^2$ (and hence $C_2$). Consequently, with $\delta=\exp\Big(-\frac{c}{\varepsilon^2}\Big)$, the RHS of \eqref{eq:delta} is less than $\varepsilon$, and so $\mathcal{N}$ is an $\varepsilon$-net of $\mathcal{P}$ under $d_{\upsilon^2}$. This completes the proof. 
\end{proof}

\begin{proof}[Proof of Lemma~\ref{lemma:chain_rule}]
    Recall from \eqref{eq:modified_sequence} that $\widetilde{m}_{\mu}(\cdot)$ represents the density of $\mathbf{z} = \boldsymbol{\beta} + \Sigma^{1/2} {\boldsymbol{\varepsilon}}$ when the coordinates of $\boldsymbol{\beta}$ are sampled i.i.d. from $\mu$. Using \eqref{eq:eigen2} we have $\Sigma \preceq \frac{1}{C_1} \mathbf{I}$. Let $\tilde{\boldsymbol{\varepsilon}} \sim N(0, \frac{1}{C_1}\mathbf{I} - \Sigma)$ independent of $\boldsymbol{\beta}$ and $\boldsymbol{\varepsilon}$. For $a \in \{1,2\}$, let  $\mathbf{z}^{(a)} = \boldsymbol{\beta}^{(a)} + \Sigma^{1/2} \boldsymbol{\varepsilon}$, where the coordinates of $\boldsymbol{\beta}^{(a)}$ are sampled i.i.d. from $\mu_a$. By the  data-processing inequality for KL divergence,   
    \begin{align}
        \mathrm{D}_{\mathrm{KL}}(\widetilde{m}_{\mu_1} \| \widetilde{m}_{\mu_2}) &= \mathrm{D}_{\mathrm{KL}} (\boldsymbol{\beta}^{(1)} + \Sigma^{1/2} \boldsymbol{\varepsilon} \| \boldsymbol{\beta}^{(2)} + \Sigma^{1/2} \boldsymbol{\varepsilon}) \nonumber \\
        &\geq \mathrm{D}_{\mathrm{KL}} (\boldsymbol{\beta}^{(1)} + \Sigma^{1/2} \boldsymbol{\varepsilon}  + \widetilde{\boldsymbol{\varepsilon}} \| \boldsymbol{\beta}^{(2)} + \Sigma^{1/2} \boldsymbol{\varepsilon}  + \widetilde{\boldsymbol{\varepsilon}}) \nonumber \\
        &= p \,\, \mathrm{D}_{\mathrm{KL}}(\mu_1 \star N(0, C_1^{-1}) \| \mu_2 \star N(0, C_1^{-1})), \nonumber 
    \end{align}
    where the last step follows by the tensorization property of KL divergence and independence across co-ordinates of $ \Sigma^{1/2} \boldsymbol{\varepsilon}  + \widetilde{\boldsymbol{\varepsilon}}\sim N\Big({\bf 0}, \frac{1}{C_1}{\bf I}\Big)$. This completes the proof. 
\end{proof}

\begin{proof}[Proof of Example~\ref{eg:eigen}]
Let the true prior be $\mu^*=\delta_c$ --- the prior that is degenerate at $c$, for some $c\in \R$.  Then the marginal density $m_{\mu^*}({\bf y}) \propto e^{-\frac{1}{2\sigma^2} \|\bf y\|^2}$ does not depend on $c$, as ${\bf e}_i$ is a contrast vector for all $i=1,\ldots,p-1$. Thus there can be no consistent estimator for $\mu^*$. Note that for any probability distribution $\mu$ on $\R$, the above argument can be generalized to show that in this case we cannot have a consistent estimator of $\mu$, as the distribution $\mu$ and any location shift of $\mu$ cannot be distinguished from the marginal likelihood $m_{\mu^*}({\bf y})$.

For computing the eigenvalues of  ${\bf X}^\top {\bf X}$, first note that ${\bf X}^\top {\bf X}$ is a $p\times p$ matrix with rank $p-1$, and so it must have at least one zero eigenvalue. Also, the non-zero eigenvalues of ${\bf X}^\top {\bf X}$ are the same as the non-zero eigenvalues of ${\bf X} {\bf X}^\top = I_{p-1}$. The desired conclusion about the eigenvalues now follow.
\end{proof}

\section{Proof of results from Section \ref{sec:thm_inf}}\label{sec:supporting}

In this section we will verify Lemma \ref{lem:w}, Lemma \ref{lem:unif_cont}, Lemma \ref{lem:dev_bounds}, and Lemma \ref{lem:approximations}.

\begin{proof}[Proof of Lemma \ref{lem:w}]
    This follows on noting that $\mathbf{w} \mid \bbeta\sim N\Big(\frac{1}{\sigma^2}\mathbf{X}^\top \mathbf{X}\bbeta,\frac{1}{\sigma^2}\mathbf{X}^\top \mathbf{X}\Big)$, and so
    $$\E\Big[\|\mathbf{w}\|_2^2 \mid \bbeta\Big]=\Big\|\frac{1}{\sigma^2}\mathbf{X}^\top \mathbf{X}\bbeta\Big\|_2^2+{\rm tr}\Big(\frac{1}{\sigma^2}\mathbf{X}^\top \mathbf{X}\Big)\le (C_2^2+C_2)p,$$
    where we use \eqref{eq:eigen} in the last step and that $\beta_i \in [-1,1]$.
\end{proof}

\begin{proof}[Proof of Lemma \ref{lem:unif_cont}]
Let $\{(\mu^{(k)},\tau_k,d_k)\}_{k\ge 1}$ be a sequence in $\mathcal{P}\times \R\times (0,\infty)$ converging to $(\mu^{(\infty)},\tau_\infty,d_\infty)$ in $\mathcal{P}\times \R\times (0,\infty)$.
\begin{enumerate}
    \item[(i)]
It suffices to show that
$$\lim_{k\to\infty}c_{\mu^{(k)}}(\tau_k,d_k)=c_{\mu^{(\infty)}}(\tau_\infty,d_\infty).$$ To this effect, there exists $\theta_k\sim \mu^{(k)}$ for $1\le k\le \infty$ such that $\theta_k\stackrel{a.s.}{\to}\theta_\infty$. Consequently, dominated convergence theorem ($|\theta_k| \le 1$) gives
\begin{align*}
    e^{c_{\mu^{(k)}}(\tau_k,d_k)}=\E e^{\tau_k\theta_k-\frac{d_k}{2}\theta_k^2}\to \E e^{\tau_\infty\theta_\infty-\frac{d_\infty}{2}\theta_\infty^2}=e^{c_{\mu^{(\infty)}}(\tau_\infty,d_\infty)}.
\end{align*}
The desired conclusion follows from this on taking $\log$.

\item[(ii)]
Let $\theta_k\sim \mu^{(k)}_{\tau_k,d_k}$ for $1\le k\le \infty$. Then for any $\alpha\in \R$ we have
$$\log\E e^{\alpha \theta_k}=c_{\mu^{(k)}}(\alpha+\tau_k,d_k)-c_{\mu^{(k)}}(\tau_k,d_k)\to c_{\mu^{(\infty)}}(\alpha+\tau_\infty,d_\infty)-c_{\mu^{(\infty)}}(\tau_\infty,d_\infty)=\log\E e^{\alpha \theta_\infty},$$
where the convergence above uses part (i). The desired weak convergence follows from the convergence of the moment generating functions.

\item[(iii)]
Let $\theta_k\sim \mu^{(k)}_{\tau_k,d_k}$ for $1\le k\le \infty$, as in part (ii) above. Then we have
$$\dot{c}_{\mu^{(k)}}(\tau_k,d_k)=\E[\theta_k]\to \E[\theta_\infty]=\dot{c}_{\mu^{(\infty)}}(\tau_\infty,d_\infty),$$
where the convergence uses part (ii), along with the dominated convergence theorem.

\item[(iv)]
Let $\{(v_k,d_k)\}_{k\ge 1}$ be a sequence in $I_{\mu}\times (0,\infty)$ converging to $(v_\infty,d_\infty)\in I_\mu\times(0,\infty)$. Setting $\tau_k:=h_{\mu,d_k}(v_k)$ for $1\le k\le \infty$, we need to show that $\tau_k\to\tau_\infty.$ To this effect, set $a:=\inf{\rm supp}(\mu), b:= \sup{\rm supp}(\mu)$, and fix $\varepsilon>0$ such that $v_\infty\in (a+\varepsilon,b-\varepsilon)$. Then there exists $K=K_\varepsilon$ such that for all $k\ge K$ we have $v_k\in (a+\varepsilon, b-\varepsilon)$. This, along with the assumption $\lim_{k\to\infty}d_k=d_\infty$ forces $\limsup_{k\to\infty}|\tau_k|<\infty$ (as otherwise one would contradict that $v_ \infty \in (a+\varepsilon, b-\varepsilon)$).
Setting $M:=\sup_{k\ge 1}|\tau_k|<\infty$ we then have
$$|\dot{c}_\mu(\tau_k,d_\infty)-\dot{c}_\mu(\tau_k,d_k)|\le \sup_{|\tau|\le M}|\dot{c}_\mu(\tau,d_\infty)-\dot{c}_\mu(\tau,d_k)|=o(1),$$
where the last equality uses part (iii). Thus we have
\begin{align*}
\dot{c}_\mu(\tau_k,d_\infty)=\dot{c}_\mu(\tau_k,d_k)+o(1)=v_k+o(1)=v_\infty+o(1)=\dot{c}_\mu(\tau_\infty,d_\infty)+o(1).
\end{align*}
Applying the continuous function $h_{\mu,d_\infty}(\cdot)$ then gives
$\tau_k\to\tau_\infty$, as desired.
\end{enumerate}
\end{proof}

\begin{proof}[Proof of Lemma \ref{lem:dev_bounds}]

   Since $\mu^*$ is non-degenerate and $\mu^{(p)}$ converges weakly to $\mu^*$, it follows that $\mu^{(p)}$ is also non-degenerate for all $p$ large. In this case, for any $\mathbf{r}\in I_{\mu^{(p)}}^p$ we have 
    \begin{align}
        \nabla \M_p(\mathbf{r},\mathbf{w}, \mu^{(p)}) = - A\mathbf{r} + \mathbf{w}- h_{\mu^{(p)},\mathbf{d}}(\mathbf{r}). \nonumber 
     \end{align}
    If $r_i \to \sup \mathrm{Supp}(\mu^{(p)})$ for some $i\in [p]$, $h_{\mu^{(p)},d_i}(r_i) \to \infty$. Similarly, if $r_i \to \inf \mathrm{Supp}(\mu^{(p)})$, $h_{\mu^{(p)},d_i}(r_i) \to - \infty$. This implies that a global optimizer must belong to $I_{\mu^{(p)}}^p$, where $I_\mu$ is as defined in Lemma \ref{lem:unif_cont} part (iv). Consequently, the optimizer $\mathbf{v}^{(p)}$ satisfies
 $\nabla \M_p(\mathbf{v}^{(p)}, \mathbf{w}, \mu^{(p)})=0$, which in turn gives 
    \begin{align}
        \sum_{i =1}^p (\gamma_i^{(p)})^2 = \sum_{i =1}^p (-A{\mathbf{v}}^{(p)} + \mathbf{w})_i^2\le 2\|\mathbf{w}\|_2^2+2p\lambda_{\max}(A)^2. \nonumber 
     \end{align}
     Using Lemma \ref{lem:w} and \eqref{eq:eigen}, the right hand side above is $O_{\P_{\mu^*}}(p)$, from which the desired conclusion follows by Markov's inequality. 
\end{proof}

\begin{proof}[Proof of Lemma \ref{lem:approximations}]

The first conclusion follows on noting that $|\{i\in [p]:|\tau_i^{(p)}|>K\}|=\er(p)$, which follows from Lemma \ref{lem:dev_bounds}. The second conclusion follows from noting that 
$$\lim_{p \to \infty} 
    \sup_{|\theta| \leq K, \frac{1}{K} \leq d \leq K} | \dot c_{\mu^{(p)}}(\theta,d) - \dot c_{\mu^*}(\theta,d) | =0. $$
    To verify the above display, note that the function  $(\mu,\theta,d)\mapsto \dot{c}_\mu(\theta,d)$ is continuous on the compact set $\mathcal{P}\times[-K,K]\times [\frac{1}{K},K]$ (invoking Lemma \ref{lem:unif_cont} part (iii)), and hence uniformly continuous. 
\end{proof}

\section{Optimization algorithm for approximating the NMF-EB estimator}\label{sec:Opt-Algo}
In this section we provide the details of the algorithm introduced in Section~\ref{sec:numerical}. In particular, we give the exact expressions of the gradient of the objective (w.r.t.~the parameters) that can be directly used in a first order optimization algorithm to maximize $\widetilde{M}_p({\bm \gamma},\mathbf{w},\nu({\bf p}))$.
Differentiating with respect to $\gamma_i$, for $i\in [p]$, gives (using $u(\cdot)$ as in~\eqref{eq:u_i})
\begin{align}
\frac{\partial \widetilde{M}_p({\bm \gamma},\mathbf{w},\nu({\bf p}))}{\partial \gamma_i}=\Big[-\sum_{j=1}^p A_{ij}u_j^{(\nu({\bf p}))}(\gamma_j)+w_i-\gamma_i\Big]\frac{\partial u_i^{(\nu({\bf p}))}}{\partial \gamma_i} (\gamma_i). \label{eq:M_tilde_derivative}
\end{align}
On the other hand, differentiating with respect to $p_r$, for $1\le r\le k$,  gives
\begin{align*}
 \frac{\partial c_{\nu({\bf p})}\Big(\gamma_i, d_i\Big)}{\partial p_r} = &  \; \exp\Big(a_r \gamma_i-\frac{a_r^2 d_i}{2}-c_{\nu({\bf p})}\Big(\gamma_i, d_i\Big)\Big) \\
\frac{\partial \widetilde{M}_p({\bf \gamma},\mathbf{w},\nu({\bf p}))}{\partial p_r}=&-\sum_{i,j=1}^p \Big[\frac{\partial u^{(\nu({\bf p}))}_i(\gamma_i)}{\partial p_r} A_{ij}u_j^{(\nu({\bf p}))}(\gamma_j)\Big]+\sum_{i=1}^p w_i\frac{\partial u^{(\nu({\bf p}))}_i(\gamma_i)}{\partial p_r}-\sum_{i=1}^p \gamma_i\frac{\partial u^{(\nu({\bf p}))}_i(\gamma_i)}{\partial p_r}\\
&-\sum_{i=1}^p \exp\Big(a_r \gamma_i-\frac{a_r^2 d_i}{2}-c_{\nu({\bf p})}\Big(\gamma_i, d_i\Big)\Big).
\end{align*}
In the above derivatives, the function ${\bf u}(\cdot)$ and its derivatives can be computed using the following formulas:
\begin{align*}
c_{\nu({\bf p})}\Big(\gamma_i, d_i\Big)=&\log \left[\sum_{r=1}^k  p_r\exp\Big(a_r \gamma_i-\frac{a_r^2 d_i}{2}\Big)\right],\\
u^{(\nu({\bf p}))}_i(\gamma_i)=&\sum_{r=1}^k  p_ra_r \exp\left(a_r \gamma_i-\frac{a_r^2 d_i}{2}-c_{\nu({\bf p})}\Big(\gamma_i,d_i\Big)\right),\\
\frac{\partial u_i^{(\nu({\bf p}))}}{\partial \gamma_i}(\gamma_i)=&\frac{1}{2}\sum_{r,s=1}^k p_r p_s(a_r-a_s)^2 \exp\left(a_r \gamma_i-\frac{a_r^2 d_i}{2}-c_{\nu({\bf p})}\Big(\gamma_i, d_i\Big)\right) \exp\left(a_s \gamma_i-\frac{a_s^2 d_i}{2}-c_{\nu({\bf p})}\Big(\gamma_i, d_i\Big)\right),\\
\frac{\partial u_i^{(\nu({\bf p}))}}{\partial p_r}(\gamma_i)=&(a_r-u_i^{(\nu({\bf p}))}) \exp\Big(a_r\gamma_i-\frac{a_r^2 d_i}{2}-c_{\nu({\bf p})}\Big(\gamma_i, d_i\Big)\Big).
\end{align*}

\noindent
\textbf{Initialization for $\bgamma$:} Note that, from \eqref{eq:M_tilde_derivative}, any stationary point of $\widetilde{M}_p$ satisfies $\gamma_i = - \sum_{j=1}^{p} A_{ij} u_j^{(\nu(\mathbf{p}))} (\gamma_j) + w_i$. We start with a suitable estimate $\hat{\bbeta} = (\hat \beta_1,\ldots, \hat \beta_p)$ (either the OLS estimate, or Lasso with a small regularization parameter), and set $\gamma_i = - \sum_{j=1}^{p} A_{ij} \hat{\beta}_j + w_i$.

\bibliographystyle{imsart-nameyear}
\bibliography{References}

@book {Meister-2009,
    AUTHOR = {Meister, Alexander},
     TITLE = {Deconvolution problems in nonparametric statistics},
    SERIES = {Lecture Notes in Statistics},
    VOLUME = {193},
 PUBLISHER = {Springer-Verlag, Berlin},
      YEAR = {2009},
     PAGES = {vi+210},
      ISBN = {978-3-540-87556-7},
   MRCLASS = {62-02 (62G05 62G08 62G20 62H35 62J07 94A12)},
  MRNUMBER = {2768576},
MRREVIEWER = {Su-Yun\ Chen\ Huang},
       DOI = {10.1007/978-3-540-87557-4},
       URL = {https://doi-org.ezproxy.cul.columbia.edu/10.1007/978-3-540-87557-4},
}

@book{polyanskiy2025information,
  title={Information theory: From coding to learning},
  author={Polyanskiy, Yury and Wu, Yihong},
  year={2025},
  publisher={Cambridge university press}
}

@article{chen2021dimension,
  title={Dimension-free log-Sobolev inequalities for mixture distributions},
  author={Chen, Hong-Bin and Chewi, Sinho and Niles-Weed, Jonathan},
  journal={Journal of Functional Analysis},
  volume={281},
  number={11},
  pages={109236},
  year={2021},
  publisher={Elsevier}
}

@article{celentano2023mean,
  title={Mean-field variational inference with the TAP free energy: Geometric and statistical properties in linear models},
  author={Celentano, Michael and Fan, Zhou and Lin, Licong and Mei, Song},
  journal={arXiv preprint arXiv:2311.08442},
  year={2023}
}

@article {MR2796867,
    AUTHOR = {Hall, Peter and Ormerod, J. T. and Wand, M. P.},
     TITLE = {Theory of {G}aussian variational approximation for a {P}oisson
              mixed model},
   JOURNAL = {Statist. Sinica},
  FJOURNAL = {Statistica Sinica},
    VOLUME = {21},
      YEAR = {2011},
    NUMBER = {1},
     PAGES = {369--389},
      ISSN = {1017-0405,1996-8507},
   MRCLASS = {62J12 (62F10 62F12)},
  MRNUMBER = {2796867},
MRREVIEWER = {Alexander\ G.\ Kukush},
}

@article {MR2906876,
    AUTHOR = {Hall, Peter and Pham, Tung and Wand, M. P. and Wang, S. S. J.},
     TITLE = {Asymptotic normality and valid inference for {G}aussian
              variational approximation},
   JOURNAL = {Ann. Statist.},
  FJOURNAL = {The Annals of Statistics},
    VOLUME = {39},
      YEAR = {2011},
    NUMBER = {5},
     PAGES = {2502--2532},
      ISSN = {0090-5364,2168-8966},
   MRCLASS = {62F12 (62F25 62J12)},
  MRNUMBER = {2906876},
MRREVIEWER = {Ryan\ S.\ Gill},
       DOI = {10.1214/11-AOS908},
       URL = {https://doi.org/10.1214/11-AOS908},
}

@article {MR3127853,
    AUTHOR = {Bickel, Peter and Choi, David and Chang, Xiangyu and Zhang,
              Hai},
     TITLE = {Asymptotic normality of maximum likelihood and its variational
              approximation for stochastic blockmodels},
   JOURNAL = {Ann. Statist.},
  FJOURNAL = {The Annals of Statistics},
    VOLUME = {41},
      YEAR = {2013},
    NUMBER = {4},
     PAGES = {1922--1943},
      ISSN = {0090-5364,2168-8966},
   MRCLASS = {62F12},
  MRNUMBER = {3127853},
MRREVIEWER = {R.\ T.\ Smythe},
       DOI = {10.1214/13-AOS1124},
       URL = {https://doi.org/10.1214/13-AOS1124},
}

@article {MR4152113,
    AUTHOR = {Zhang, Anderson Y. and Zhou, Harrison H.},
     TITLE = {Theoretical and computational guarantees of mean field
              variational inference for community detection},
   JOURNAL = {Ann. Statist.},
  FJOURNAL = {The Annals of Statistics},
    VOLUME = {48},
      YEAR = {2020},
    NUMBER = {5},
     PAGES = {2575--2598},
      ISSN = {0090-5364,2168-8966},
   MRCLASS = {62G05 (62F15 62H12)},
  MRNUMBER = {4152113},
       DOI = {10.1214/19-AOS1898},
       URL = {https://doi.org/10.1214/19-AOS1898},
}

@article {MR4038047,
    AUTHOR = {Austin, Tim},
     TITLE = {The structure of low-complexity {G}ibbs measures on product
              spaces},
   JOURNAL = {Ann. Probab.},
  FJOURNAL = {The Annals of Probability},
    VOLUME = {47},
      YEAR = {2019},
    NUMBER = {6},
     PAGES = {4002--4023},
      ISSN = {0091-1798,2168-894X},
   MRCLASS = {60B99 (60G99 82B20 94A17)},
  MRNUMBER = {4038047},
       DOI = {10.1214/19-aop1352},
       URL = {https://doi.org/10.1214/19-aop1352},
}

@incollection {MR2894235,
    AUTHOR = {Turner, Richard Eric and Sahani, Maneesh},
     TITLE = {Two problems with variational expectation maximisation for
              time series models},
 BOOKTITLE = {Bayesian time series models},
     PAGES = {104--124},
 PUBLISHER = {Cambridge Univ. Press, Cambridge},
      YEAR = {2011},
      ISBN = {978-0-521-19676-5},
   MRCLASS = {62M10 (62M09)},
  MRNUMBER = {2894235},
}

@article{qiu2023sub,
  title={Sub-optimality of the Naive Mean Field approximation for proportional high-dimensional Linear Regression},
  author={Qiu, Jiaze},
  journal={arXiv preprint arXiv:2310.09931},
  year={2023}
}

@article{MR1404531,
	author = {Marton, K.},
	doi = {10.1214/aop/1039639365},
	fjournal = {The Annals of Probability},
	issn = {0091-1798,2168-894X},
	journal = {Ann. Probab.},
	mrclass = {60F10 (60G70)},
	mrnumber = {1404531},
	number = {2},
	pages = {857--866},
	title = {Bounding {$\overline d$}-distance by informational divergence: a method to prove measure concentration},
	url = {https://doi.org/10.1214/aop/1039639365},
	volume = {24},
	year = {1996},
	bdsk-url-1 = {https://doi.org/10.1214/aop/1039639365}}

@article{MR0838213,
	author = {Marton, K.},
	doi = {10.1109/TIT.1986.1057176},
	fjournal = {Institute of Electrical and Electronics Engineers. Transactions on Information Theory},
	issn = {0018-9448,1557-9654},
	journal = {IEEE Trans. Inform. Theory},
	mrclass = {94A15},
	mrnumber = {838213},
	number = {3},
	pages = {445--446},
	title = {A simple proof of the blowing-up lemma},
	url = {https://doi.org/10.1109/TIT.1986.1057176},
	volume = {32},
	year = {1986},
	bdsk-url-1 = {https://doi.org/10.1109/TIT.1986.1057176}}

@book{MR2459454,
	author = {Villani, C\'{e}dric},
	doi = {10.1007/978-3-540-71050-9},
	isbn = {978-3-540-71049-3},
	mrclass = {49-02 (28A75 37J50 49Q20 53C23 58E30)},
	mrnumber = {2459454},
	mrreviewer = {Dario\ Cordero-Erausquin},
	note = {Old and new},
	pages = {xxii+973},
	publisher = {Springer-Verlag, Berlin},
	series = {Grundlehren der mathematischen Wissenschaften [Fundamental Principles of Mathematical Sciences]},
	title = {Optimal transport},
	url = {https://doi.org/10.1007/978-3-540-71050-9},
	volume = {338},
	year = {2009},
	bdsk-url-1 = {https://doi.org/10.1007/978-3-540-71050-9}}

@inproceedings{candes2014mathematics,
	author = {Cand{\`e}s, Emmanuel J},
	booktitle = {Proceedings of the International Congress of Mathematicians, Seoul, South Korea},
	organization = {Citeseer},
	pages = {235--258},
	title = {Mathematics of sparsity (and a few other things)},
	volume = {123},
	year = {2014}}

@article{yang2010common,
	author = {Yang, Jian and Benyamin, Beben and McEvoy, Brian P and Gordon, Scott and Henders, Anjali K and Nyholt, Dale R and Madden, Pamela A and Heath, Andrew C and Martin, Nicholas G and Montgomery, Grant W and others},
	journal = {Nature genetics},
	number = {7},
	pages = {565--569},
	publisher = {Nature Publishing Group US New York},
	title = {Common SNPs explain a large proportion of the heritability for human height},
	volume = {42},
	year = {2010}}

@article{lewis2020polygenic,
	author = {Lewis, Cathryn M and Vassos, Evangelos},
	journal = {Genome medicine},
	number = {1},
	pages = {1--11},
	publisher = {BioMed Central},
	title = {Polygenic risk scores: from research tools to clinical instruments},
	volume = {12},
	year = {2020}}

@article{angrist2004control,
	author = {Angrist, Joshua and Hahn, Jinyong},
	journal = {Review of Economics and statistics},
	number = {1},
	pages = {58--72},
	publisher = {MIT Press 238 Main St., Suite 500, Cambridge, MA 02142-1046, USA journals~{\ldots}},
	title = {When to control for covariates? Panel asymptotics for estimates of treatment effects},
	volume = {86},
	year = {2004}}

@article{MR0503332,
	author = {Ellis, Richard S. and Newman, Charles M.},
	doi = {10.1007/BF01012508},
	fjournal = {Journal of Statistical Physics},
	issn = {0022-4715,1572-9613},
	journal = {J. Statist. Phys.},
	mrclass = {82.60 (60K35)},
	mrnumber = {503332},
	mrreviewer = {B.\ S.\ Nahapetian},
	number = {2},
	pages = {149--161},
	title = {The statistics of {C}urie-{W}eiss models},
	url = {https://doi.org/10.1007/BF01012508},
	volume = {19},
	year = {1978},
	bdsk-url-1 = {https://doi.org/10.1007/BF01012508}}

@book{MR3752129,
	author = {Friedli, S. and Velenik, Y.},
	isbn = {978-1-107-18482-4},
	mrclass = {82-01 (82B05)},
	mrnumber = {3752129},
	note = {A concrete mathematical introduction},
	pages = {xix+622},
	publisher = {Cambridge University Press, Cambridge},
	title = {Statistical mechanics of lattice systems},
	year = {2018}}

@article{MR3885271,
	author = {Castillo, Isma\"{e}l and Mismer, Romain},
	doi = {10.1214/18-EJS1494},
	fjournal = {Electronic Journal of Statistics},
	issn = {1935-7524},
	journal = {Electron. J. Stat.},
	mrclass = {62G20 (62C12 62F15 62J07)},
	mrnumber = {3885271},
	number = {2},
	pages = {3953--4001},
	title = {Empirical {B}ayes analysis of spike and slab posterior distributions},
	url = {https://doi.org/10.1214/18-EJS1494},
	volume = {12},
	year = {2018},
	bdsk-url-1 = {https://doi.org/10.1214/18-EJS1494}}

@article{zhong2022empirical,
	author = {Zhong, Xinyi and Su, Chang and Fan, Zhou},
	journal = {Journal of the Royal Statistical Society Series B: Statistical Methodology},
	number = {3},
	pages = {853--878},
	publisher = {Oxford University Press},
	title = {Empirical Bayes PCA in high dimensions},
	volume = {84},
	year = {2022}}

@article{celentano2022sudakov,
	author = {Celentano, Michael},
	journal = {arXiv preprint arXiv:2208.09550},
	title = {Sudakov-Fernique post-AMP, and a new proof of the local convexity of the TAP free energy},
	year = {2022}}

@article{MR4600991,
	author = {Celentano, Michael and Fan, Zhou and Mei, Song},
	doi = {10.1214/23-aos2257},
	fjournal = {The Annals of Statistics},
	issn = {0090-5364,2168-8966},
	journal = {Ann. Statist.},
	mrclass = {62C10},
	mrnumber = {4600991},
	number = {2},
	pages = {519--546},
	title = {Local convexity of the {TAP} free energy and {AMP} convergence for {$\Bbb Z_2$}-synchronization},
	url = {https://doi.org/10.1214/23-aos2257},
	volume = {51},
	year = {2023},
	bdsk-url-1 = {https://doi.org/10.1214/23-aos2257}}

@article{MR4203332,
	author = {Fan, Zhou and Mei, Song and Montanari, Andrea},
	doi = {10.1214/20-AOP1443},
	fjournal = {The Annals of Probability},
	issn = {0091-1798,2168-894X},
	journal = {Ann. Probab.},
	mrclass = {60F10 (46L54 60K35 62F15 82D30)},
	mrnumber = {4203332},
	number = {1},
	pages = {1--45},
	title = {T{AP} free energy, spin glasses and variational inference},
	url = {https://doi.org/10.1214/20-AOP1443},
	volume = {49},
	year = {2021},
	bdsk-url-1 = {https://doi.org/10.1214/20-AOP1443}}

@article{MR4612652,
	author = {Qiu, Jiaze and Sen, Subhabrata},
	doi = {10.1214/22-aap1874},
	fjournal = {The Annals of Applied Probability},
	issn = {1050-5164,2168-8737},
	journal = {Ann. Appl. Probab.},
	mrclass = {62F15 (60F99 60K35 62J05 82B44)},
	mrnumber = {4612652},
	number = {4},
	pages = {2643--2680},
	title = {The {TAP} free energy for high-dimensional linear regression},
	url = {https://doi.org/10.1214/22-aap1874},
	volume = {33},
	year = {2023},
	bdsk-url-1 = {https://doi.org/10.1214/22-aap1874}}

@inproceedings{krzakala2014variational,
	author = {Krzakala, Florent and Manoel, Andre and Tramel, Eric W and Zdeborov{\'a}, Lenka},
	booktitle = {2014 IEEE International Symposium on Information Theory},
	organization = {IEEE},
	pages = {1499--1503},
	title = {Variational free energies for compressed sensing},
	year = {2014}}

@book{MR1026102,
	author = {M\'{e}zard, Marc and Parisi, Giorgio and Virasoro, Miguel Angel},
	isbn = {9971-50-115-5; 9971-50-116-3},
	mrclass = {82D30 (68T01 82-02 82C32 92B20)},
	mrnumber = {1026102},
	mrreviewer = {Aernout\ C. D. van Enter},
	pages = {xiv+461},
	publisher = {World Scientific Publishing Co., Inc., Teaneck, NJ},
	series = {World Scientific Lecture Notes in Physics},
	title = {Spin glass theory and beyond},
	volume = {9},
	year = {1987}}

@book{MR2518205,
	author = {M\'{e}zard, Marc and Montanari, Andrea},
	doi = {10.1093/acprof:oso/9780198570837.001.0001},
	isbn = {978-0-19-857083-7},
	mrclass = {94A15 (68Q25 81P45 81P68 82B03 94B05)},
	mrnumber = {2518205},
	mrreviewer = {James\ W.\ Harrington},
	pages = {xiv+569},
	publisher = {Oxford University Press, Oxford},
	series = {Oxford Graduate Texts},
	title = {Information, physics, and computation},
	url = {https://doi.org/10.1093/acprof:oso/9780198570837.001.0001},
	year = {2009},
	bdsk-url-1 = {https://doi.org/10.1093/acprof:oso/9780198570837.001.0001}}

@book{MR3837109,
	author = {Vershynin, Roman},
	doi = {10.1017/9781108231596},
	isbn = {978-1-108-41519-4},
	mrclass = {60-01 (60B05 60B20 60E15 60Fxx 62H25)},
	mrnumber = {3837109},
	mrreviewer = {Sasha\ Sodin},
	note = {An introduction with applications in data science, With a foreword by Sara van de Geer},
	pages = {xiv+284},
	publisher = {Cambridge University Press, Cambridge},
	series = {Cambridge Series in Statistical and Probabilistic Mathematics},
	title = {High-dimensional probability},
	url = {https://doi.org/10.1017/9781108231596},
	volume = {47},
	year = {2018},
	bdsk-url-1 = {https://doi.org/10.1017/9781108231596}}

@book{MR2797133,
	editor = {O'Hagan, Anthony and West, Mike},
	isbn = {978-0-19-954890-3},
	mrclass = {62-06 (62F15 91-06)},
	mrnumber = {2797133},
	pages = {xxxvi+889},
	publisher = {Oxford University Press, Oxford},
	title = {The {O}xford handbook of applied {B}ayesian analysis},
	year = {2010}}

@article{katsevich2023approximation,
	author = {Katsevich, Anya and Rigollet, Philippe},
	journal = {arXiv preprint arXiv:2301.02168},
	title = {On the approximation accuracy of gaussian variational inference},
	year = {2023}}

@inproceedings{diao2023forward,
	author = {Diao, Michael Ziyang and Balasubramanian, Krishna and Chewi, Sinho and Salim, Adil},
	booktitle = {International Conference on Machine Learning},
	organization = {PMLR},
	pages = {7960--7991},
	title = {Forward-backward Gaussian variational inference via JKO in the Bures-Wasserstein Space},
	year = {2023}}

@article{lambert2022variational,
	author = {Lambert, Marc and Chewi, Sinho and Bach, Francis and Bonnabel, Silv{\`e}re and Rigollet, Philippe},
	journal = {Advances in Neural Information Processing Systems},
	pages = {14434--14447},
	title = {Variational inference via Wasserstein gradient flows},
	volume = {35},
	year = {2022}}

@article{yao2022mean,
	author = {Yao, Rentian and Yang, Yun},
	journal = {arXiv preprint arXiv:2207.08074},
	title = {Mean field variational inference via Wasserstein gradient flow},
	year = {2022}}

@article{zhang2020convergence,
	author = {Zhang, Fengshuo and Gao, Chao},
	journal = {arXiv preprint arXiv:2009.03969},
	title = {Convergence Rates of Empirical Bayes Posterior Distributions: A Variational Perspective},
	year = {2020}}

@inproceedings{ghorbani2019instability,
	author = {Ghorbani, Behrooz and Javadi, Hamid and Montanari, Andrea},
	booktitle = {International conference on machine learning},
	organization = {PMLR},
	pages = {2221--2231},
	title = {An instability in variational inference for topic models},
	year = {2019}}

@book{Beck-2017,
	author = {Beck, Amir},
	doi = {10.1137/1.9781611974997.ch1},
	isbn = {978-1-611974-98-0},
	mrclass = {90-02 (49M20 65K05 90C06 90C25)},
	mrnumber = {3719240},
	mrreviewer = {Sven-\AA ke\ Gustafson},
	pages = {xii+475},
	publisher = {Society for Industrial and Applied Mathematics (SIAM), Philadelphia, PA; Mathematical Optimization Society, Philadelphia, PA},
	series = {MOS-SIAM Series on Optimization},
	title = {First-order methods in optimization},
	url = {https://doi.org/10.1137/1.9781611974997.ch1},
	volume = {25},
	year = {2017},
	bdsk-url-1 = {https://doi.org/10.1137/1.9781611974997.ch1}}

@article{MR4102680,
	author = {Yang, Yun and Pati, Debdeep and Bhattacharya, Anirban},
	doi = {10.1214/19-AOS1827},
	fjournal = {The Annals of Statistics},
	issn = {0090-5364,2168-8966},
	journal = {Ann. Statist.},
	mrclass = {62G07 (60K35 62F15 62G20)},
	mrnumber = {4102680},
	number = {2},
	pages = {886--905},
	title = {{$\alpha$}-variational inference with statistical guarantees},
	url = {https://doi.org/10.1214/19-AOS1827},
	volume = {48},
	year = {2020},
	bdsk-url-1 = {https://doi.org/10.1214/19-AOS1827}}

@article{ray2020spike,
	author = {Ray, Kolyan and Szabo, Botond and Clara, Gabriel},
	journal = {Advances in Neural Information Processing Systems},
	pages = {14423--14434},
	title = {Spike and slab variational Bayes for high dimensional logistic regression},
	volume = {33},
	year = {2020}}

@article{han2019statistical,
	author = {Han, Wei and Yang, Yun},
	journal = {arXiv preprint arXiv:1911.01525},
	title = {Statistical inference in mean-field variational Bayes},
	year = {2019}}

@article{MR3595173,
	author = {Alquier, Pierre and Ridgway, James and Chopin, Nicolas},
	fjournal = {Journal of Machine Learning Research (JMLR)},
	issn = {1532-4435,1533-7928},
	journal = {J. Mach. Learn. Res.},
	mrclass = {62F15 (62F07 62H12 62H30 65C05 68T05)},
	mrnumber = {3595173},
	mrreviewer = {Peter\ Auer},
	pages = {Paper No. 239, 41},
	title = {On the properties of variational approximations of {G}ibbs posteriors},
	volume = {17},
	year = {2016}}

@article{MR3709863,
	author = {Ormerod, John T. and You, Chong and M\"{u}ller, Samuel},
	doi = {10.1214/17-EJS1332},
	fjournal = {Electronic Journal of Statistics},
	issn = {1935-7524},
	journal = {Electron. J. Stat.},
	mrclass = {62J05 (62F15)},
	mrnumber = {3709863},
	mrreviewer = {Min\ Wang},
	number = {2},
	pages = {3549--3594},
	title = {A variational {B}ayes approach to variable selection},
	url = {https://doi.org/10.1214/17-EJS1332},
	volume = {11},
	year = {2017},
	bdsk-url-1 = {https://doi.org/10.1214/17-EJS1332}}

@article{neville2014mean,
	author = {Neville, Sarah E. and Ormerod, John T. and Wand, M. P.},
	doi = {10.1214/14-EJS910},
	fjournal = {Electronic Journal of Statistics},
	issn = {1935-7524},
	journal = {Electron. J. Stat.},
	mrclass = {62F15 (62J07)},
	mrnumber = {3263115},
	number = {1},
	pages = {1113--1151},
	title = {Mean field variational {B}ayes for continuous sparse signal shrinkage: pitfalls and remedies},
	url = {https://doi.org/10.1214/14-EJS910},
	volume = {8},
	year = {2014},
	bdsk-url-1 = {https://doi.org/10.1214/14-EJS910}}

@article{wang2019variational,
	author = {Wang, Yixin and Blei, David},
	journal = {Advances in Neural Information Processing Systems},
	title = {Variational Bayes under model misspecification},
	volume = {32},
	year = {2019}}

@article{MR4011769,
	author = {Wang, Yixin and Blei, David M.},
	doi = {10.1080/01621459.2018.1473776},
	fjournal = {Journal of the American Statistical Association},
	issn = {0162-1459,1537-274X},
	journal = {J. Amer. Statist. Assoc.},
	mrclass = {62F15 (62E20 62G10)},
	mrnumber = {4011769},
	number = {527},
	pages = {1147--1161},
	title = {Frequentist consistency of variational {B}ayes},
	url = {https://doi.org/10.1080/01621459.2018.1473776},
	volume = {114},
	year = {2019},
	bdsk-url-1 = {https://doi.org/10.1080/01621459.2018.1473776}}

@article{MR2221291,
	author = {Wang, Bo and Titterington, D. M.},
	doi = {10.1214/06-BA121},
	fjournal = {Bayesian Analysis},
	issn = {1936-0975,1931-6690},
	journal = {Bayesian Anal.},
	mrclass = {99-01},
	mrnumber = {2221291},
	number = {3},
	pages = {625--649},
	title = {Convergence properties of a general algorithm for calculating variational {B}ayesian estimates for a normal mixture model},
	url = {https://doi.org/10.1214/06-BA121},
	volume = {1},
	year = {2006},
	bdsk-url-1 = {https://doi.org/10.1214/06-BA121}}

@article{chen2017consistency,
		AUTHOR = {Chen, Jiahua},
     TITLE = {Consistency of the {MLE} under mixture models},
   JOURNAL = {Statist. Sci.},
  FJOURNAL = {Statistical Science. A Review Journal of the Institute of
              Mathematical Statistics},
    VOLUME = {32},
      YEAR = {2017},
    NUMBER = {1},
     PAGES = {47--63},
      ISSN = {0883-4237,2168-8745},
   MRCLASS = {62G05 (62G20)},
  MRNUMBER = {3634306},
MRREVIEWER = {Kaushik\ Ghosh},
       DOI = {10.1214/16-STS578},
       URL = {https://doi.org/10.1214/16-STS578},
	}

@book{Nocedal-Wright-2006,
	author = {Nocedal, Jorge and Wright, Stephen J.},
	edition = {Second},
	isbn = {978-0387-30303-1; 0-387-30303-0},
	mrclass = {90-01 (49Mxx 65K05 90-02 90C30)},
	mrnumber = {2244940},
	pages = {xxii+664},
	publisher = {Springer, New York},
	series = {Springer Series in Operations Research and Financial Engineering},
	title = {Numerical optimization},
	year = {2006}}

@article{Ghosal-vdVaart-2001,
	author = {Ghosal, Subhashis and van der Vaart, Aad W.},
	doi = {10.1214/aos/1013203453},
	fjournal = {The Annals of Statistics},
	issn = {0090-5364,2168-8966},
	journal = {Ann. Statist.},
	mrclass = {62G07 (62G20)},
	mrnumber = {1873329},
	number = {5},
	pages = {1233--1263},
	title = {Entropies and rates of convergence for maximum likelihood and {B}ayes estimation for mixtures of normal densities},
	url = {https://doi.org/10.1214/aos/1013203453},
	volume = {29},
	year = {2001},
	bdsk-url-1 = {https://doi.org/10.1214/aos/1013203453}}

@article{Brown-Greenshtein-2009,
	author = {Brown, Lawrence D. and Greenshtein, Eitan},
	doi = {10.1214/08-AOS630},
	fjournal = {The Annals of Statistics},
	issn = {0090-5364,2168-8966},
	journal = {Ann. Statist.},
	mrclass = {62C12 (62C25)},
	mrnumber = {2533468},
	mrreviewer = {Yu-ling\ Tseng},
	number = {4},
	pages = {1685--1704},
	title = {Nonparametric empirical {B}ayes and compound decision approaches to estimation of a high-dimensional vector of normal means},
	url = {https://doi.org/10.1214/08-AOS630},
	volume = {37},
	year = {2009},
	bdsk-url-1 = {https://doi.org/10.1214/08-AOS630}}

@article{pfanzagl1988consistency,
	author = {Pfanzagl, Johann},
	journal = {Journal of Statistical Planning and Inference},
	number = {2},
	pages = {137--158},
	publisher = {Elsevier},
	title = {Consistency of maximum likelihood estimators for certain nonparametric families, in particular: mixtures},
	volume = {19},
	year = {1988}}

@article{lambert1984asymptotic,
	author = {Lambert, Diane and Tierney, Luke},
	journal = {The Annals of Statistics},
	pages = {1388--1399},
	publisher = {JSTOR},
	title = {Asymptotic properties of maximum likelihood estimates in the mixed Poisson model},
	year = {1984}}

@article{heckman1984method,
	author = {Heckman, James and Singer, Burton},
	journal = {Econometrica: Journal of the Econometric Society},
	pages = {271--320},
	publisher = {JSTOR},
	title = {A method for minimizing the impact of distributional assumptions in econometric models for duration data},
	year = {1984}}

@article{bennett1972continuous,
	author = {Bennett, G Kemble and Martz, HF},
	journal = {Biometrika},
	number = {2},
	pages = {361--368},
	publisher = {Oxford University Press},
	title = {A continuous empirical Bayes smoothing technique},
	volume = {59},
	year = {1972}}

@article{lemon1969empirical,
	author = {Lemon, Glen H and Krutchkoff, Richard G},
	journal = {Biometrika},
	number = {2},
	pages = {361--365},
	publisher = {Oxford University Press},
	title = {An empirical Bayes smoothing technique},
	volume = {56},
	year = {1969}}

@article{laird1978nonparametric,
	author = {Laird, Nan},
	journal = {Journal of the American Statistical Association},
	number = {364},
	pages = {805--811},
	publisher = {Taylor \& Francis},
	title = {Nonparametric maximum likelihood estimation of a mixing distribution},
	volume = {73},
	year = {1978}}

@article{liu2007partially,
	author = {Liu, Lei and Zhu, Yu},
	journal = {Journal of Statistical Planning and Inference},
	number = {7},
	pages = {2509--2522},
	publisher = {Elsevier},
	title = {Partially projected gradient algorithms for computing nonparametric maximum likelihood estimates of mixing distributions},
	volume = {137},
	year = {2007}}

@article{MR2325271,
	author = {Wang, Yong},
	doi = {10.1111/j.1467-9868.2007.00583.x},
	fjournal = {Journal of the Royal Statistical Society. Series B. Statistical Methodology},
	issn = {1369-7412,1467-9868},
	journal = {J. R. Stat. Soc. Ser. B Stat. Methodol.},
	mrclass = {62G05 (62G07)},
	mrnumber = {2325271},
	number = {2},
	pages = {185--198},
	title = {On fast computation of the non-parametric maximum likelihood estimate of a mixing distribution},
	url = {https://doi.org/10.1111/j.1467-9868.2007.00583.x},
	volume = {69},
	year = {2007},
	bdsk-url-1 = {https://doi.org/10.1111/j.1467-9868.2007.00583.x}}

@article{lesperance1992algorithm,
	author = {Lesperance, Mary L and Kalbfleisch, John D},
	journal = {Journal of the American Statistical Association},
	number = {417},
	pages = {120--126},
	publisher = {Taylor \& Francis},
	title = {An algorithm for computing the nonparametric MLE of a mixing distribution},
	volume = {87},
	year = {1992}}

@article{MR0783372,
	author = {B\"{o}hning, Dankmar},
	doi = {10.1016/0378-3758(85)90024-2},
	fjournal = {Journal of Statistical Planning and Inference},
	issn = {0378-3758,1873-1171},
	journal = {J. Statist. Plann. Inference},
	mrclass = {62G05 (62K05)},
	mrnumber = {783372},
	number = {1},
	pages = {57--69},
	title = {Numerical estimation of a probability measure},
	url = {https://doi.org/10.1016/0378-3758(85)90024-2},
	volume = {11},
	year = {1985},
	bdsk-url-1 = {https://doi.org/10.1016/0378-3758(85)90024-2}}

@book{lindsay1995mixture,
	author = {Lindsay, Bruce G},
	date-modified = {2023-09-28 16:14:25 -0400},
	publisher = {Conference Board of the Mathematical Sciences: NSF-CBMS regional conference series in probability and statistics, Institute of Mathematical Statistics.},
	title = {Mixture models: theory, geometry, and applications},
	year = {1995}}

@book{groeneboom1992information,
	author = {Groeneboom, Piet and Wellner, Jon A},
	publisher = {Springer Science \& Business Media},
	title = {Information bounds and nonparametric maximum likelihood estimation},
	volume = {19},
	year = {1992}}

@article{jewell1982mixtures,
	author = {Jewell, Nicholas P.},
	fjournal = {The Annals of Statistics},
	issn = {0090-5364,2168-8966},
	journal = {Ann. Statist.},
	mrclass = {62G05 (62F99 62N05)},
	mrnumber = {653523},
	mrreviewer = {John\ Kalbfleisch},
	number = {2},
	pages = {479--484},
	title = {Mixtures of exponential distributions},
	url = {http://links.jstor.org/sici?sici=0090-5364(198206)10:2<479:MOED>2.0.CO;2-U&origin=MSN},
	volume = {10},
	year = {1982},
	bdsk-url-1 = {http://links.jstor.org/sici?sici=0090-5364(198206)10:2%3C479:MOED%3E2.0.CO;2-U&origin=MSN}}

@misc{efron2021empirical,
	author = {Efron, Bradley},
	title = {Empirical Bayes: Concepts and methods},
	year = {2021}}

@book{maritz2018empirical,
	author = {Maritz, Johannes S},
	publisher = {CRC Press},
	title = {Empirical Bayes methods with applications},
	year = {2018}}

@article{zhang2003compound,
	author = {Zhang, Cun-Hui},
	doi = {10.1214/aos/1051027872},
	fjournal = {The Annals of Statistics},
	issn = {0090-5364,2168-8966},
	journal = {Ann. Statist.},
	mrclass = {62C12 (01A70)},
	mrnumber = {1983534},
	mrreviewer = {Goetz\ Trenkler},
	note = {Dedicated to the memory of Herbert E. Robbins},
	number = {2},
	pages = {379--390},
	title = {Compound decision theory and empirical {B}ayes methods},
	url = {https://doi.org/10.1214/aos/1051027872},
	volume = {31},
	year = {2003},
	bdsk-url-1 = {https://doi.org/10.1214/aos/1051027872}}

@article{casella1985introduction,
	author = {Casella, George},
	journal = {The American Statistician},
	number = {2},
	pages = {83--87},
	publisher = {Taylor \& Francis},
	title = {An introduction to empirical Bayes data analysis},
	volume = {39},
	year = {1985}}

@inproceedings{jana2023empirical,
	author = {Jana, Soham and Polyanskiy, Yury and Teh, Anzo Z and Wu, Yihong},
	booktitle = {The Thirty Sixth Annual Conference on Learning Theory},
	organization = {PMLR},
	pages = {5199--5235},
	title = {Empirical Bayes via ERM and Rademacher complexities: the Poisson model},
	year = {2023}}

@article{jana2022optimal,
	author = {Jana, Soham and Polyanskiy, Yury and Wu, Yihong},
	journal = {arXiv preprint arXiv:2209.01328},
	title = {Optimal empirical Bayes estimation for the Poisson model via minimum-distance methods},
	year = {2022}}

@article{MR0258180,
	author = {Maritz, J. S.},
	doi = {10.1093/biomet/56.2.349},
	fjournal = {Biometrika},
	issn = {0006-3444,1464-3510},
	journal = {Biometrika},
	mrclass = {62.30},
	mrnumber = {258180},
	mrreviewer = {S.\ Blumenthal},
	pages = {349--359},
	title = {Emprical {B}ayes estimation for the {P}oisson distribution},
	url = {https://doi.org/10.1093/biomet/56.2.349},
	volume = {56},
	year = {1969},
	bdsk-url-1 = {https://doi.org/10.1093/biomet/56.2.349}}

@article{MR0212927,
	author = {Maritz, J. S.},
	doi = {10.1093/biomet/53.3-4.417},
	fjournal = {Biometrika},
	issn = {0006-3444,1464-3510},
	journal = {Biometrika},
	mrclass = {62.35},
	mrnumber = {212927},
	mrreviewer = {D.\ V.\ Lindley},
	pages = {417--429},
	title = {Smooth empirical {B}ayes estimation for one-parameter discrete distributions},
	url = {https://doi.org/10.1093/biomet/53.3-4.417},
	volume = {53},
	year = {1966},
	bdsk-url-1 = {https://doi.org/10.1093/biomet/53.3-4.417}}

@article{polyanskiy2021sharp,
	author = {Polyanskiy, Yury and Wu, Yihong},
	journal = {arXiv preprint arXiv:2109.03943},
	title = {Sharp regret bounds for empirical Bayes and compound decision problems},
	year = {2021}}

@article{MR3174656,
	author = {Brown, Lawrence D. and Greenshtein, Eitan and Ritov, Ya'acov},
	doi = {10.1080/01621459.2013.771582},
	fjournal = {Journal of the American Statistical Association},
	issn = {0162-1459,1537-274X},
	journal = {J. Amer. Statist. Assoc.},
	mrclass = {62C25 (62C12 62G05 62G20)},
	mrnumber = {3174656},
	mrreviewer = {Gary\ Nan Tie},
	number = {502},
	pages = {741--749},
	title = {The {P}oisson compound decision problem revisited},
	url = {https://doi.org/10.1080/01621459.2013.771582},
	volume = {108},
	year = {2013},
	bdsk-url-1 = {https://doi.org/10.1080/01621459.2013.771582}}

@article{persaud2010comparison,
	author = {Persaud, Bhagwant and Lan, Bo and Lyon, Craig and Bhim, Ravi},
	journal = {Accident Analysis \& Prevention},
	number = {1},
	pages = {38--43},
	publisher = {Elsevier},
	title = {Comparison of empirical Bayes and full Bayes approaches for before--after road safety evaluations},
	volume = {42},
	year = {2010}}

@article{brown2008season,
	AUTHOR = {Brown, Lawrence D.},
     TITLE = {In-season prediction of batting averages: a field test of
              empirical {B}ayes and {B}ayes methodologies},
   JOURNAL = {Ann. Appl. Stat.},
  FJOURNAL = {The Annals of Applied Statistics},
    VOLUME = {2},
      YEAR = {2008},
    NUMBER = {1},
     PAGES = {113--152},
      ISSN = {1932-6157,1941-7330},
   MRCLASS = {99-01},
  MRNUMBER = {2415597},
       DOI = {10.1214/07-AOAS138},
       URL = {https://doi.org/10.1214/07-AOAS138},
	}

@article{ver1996parametric,
	author = {Ver Hoef, Jay M},
	journal = {Ecological Applications},
	number = {4},
	pages = {1047--1055},
	publisher = {Wiley Online Library},
	title = {Parametric empirical Bayes methods for ecological applications},
	volume = {6},
	year = {1996}}

@article{MR1946571,
	author = {Efron, Bradley and Tibshirani, Robert and Storey, John D. and Tusher, Virginia},
	doi = {10.1198/016214501753382129},
	fjournal = {Journal of the American Statistical Association},
	issn = {0162-1459,1537-274X},
	journal = {J. Amer. Statist. Assoc.},
	mrclass = {62C12 (62P10)},
	mrnumber = {1946571},
	number = {456},
	pages = {1151--1160},
	title = {Empirical {B}ayes analysis of a microarray experiment},
	url = {https://doi.org/10.1198/016214501753382129},
	volume = {96},
	year = {2001},
	bdsk-url-1 = {https://doi.org/10.1198/016214501753382129}}

@article{MR4528473,
	author = {Deb, Nabarun and Saha, Sujayam and Guntuboyina, Adityanand and Sen, Bodhisattva},
	doi = {10.1080/01621459.2021.1888739},
	fjournal = {Journal of the American Statistical Association},
	issn = {0162-1459,1537-274X},
	journal = {J. Amer. Statist. Assoc.},
	mrclass = {99-01},
	mrnumber = {4528473},
	number = {540},
	pages = {1820--1834},
	title = {Two-component mixture model in the presence of covariates},
	url = {https://doi.org/10.1080/01621459.2021.1888739},
	volume = {117},
	year = {2022},
	bdsk-url-1 = {https://doi.org/10.1080/01621459.2021.1888739}}

@article{polyanskiy2020self,
	author = {Polyanskiy, Yury and Wu, Yihong},
	journal = {arXiv preprint arXiv:2008.08244},
	title = {Self-regularizing property of nonparametric maximum likelihood estimator in mixture models},
	year = {2020}}

@article{MR4102674,
	author = {Saha, Sujayam and Guntuboyina, Adityanand},
	doi = {10.1214/19-AOS1817},
	fjournal = {The Annals of Statistics},
	issn = {0090-5364,2168-8966},
	journal = {Ann. Statist.},
	mrclass = {62G07 (62C10 62C12)},
	mrnumber = {4102674},
	number = {2},
	pages = {738--762},
	title = {On the nonparametric maximum likelihood estimator for {G}aussian location mixture densities with application to {G}aussian denoising},
	url = {https://doi.org/10.1214/19-AOS1817},
	volume = {48},
	year = {2020},
	bdsk-url-1 = {https://doi.org/10.1214/19-AOS1817}}

@article{MR3983318,
	author = {Efron, Bradley},
	doi = {10.1214/18-STS674},
	fjournal = {Statistical Science. A Review Journal of the Institute of Mathematical Statistics},
	issn = {0883-4237,2168-8745},
	journal = {Statist. Sci.},
	mrclass = {62C12 (62F15 62G05)},
	mrnumber = {3983318},
	number = {2},
	pages = {177--201},
	title = {Bayes, oracle {B}ayes and empirical {B}ayes},
	url = {https://doi.org/10.1214/18-STS674},
	volume = {34},
	year = {2019},
	bdsk-url-1 = {https://doi.org/10.1214/18-STS674}}

@article{kim2020fast,
	author = {Kim, Youngseok and Carbonetto, Peter and Stephens, Matthew and Anitescu, Mihai},
	journal = {Journal of Computational and Graphical Statistics},
	number = {2},
	pages = {261--273},
	publisher = {Taylor \& Francis},
	title = {A fast algorithm for maximum likelihood estimation of mixture proportions using sequential quadratic programming},
	volume = {29},
	year = {2020}}

@article{feng2018approximate,
	author = {Feng, Long and Dicker, Lee H},
	journal = {Computational Statistics \& Data Analysis},
	pages = {80--91},
	publisher = {Elsevier},
	title = {Approximate nonparametric maximum likelihood for mixture models: A convex optimization approach to fitting arbitrary multivariate mixing distributions},
	volume = {122},
	year = {2018}}

@article{koenker2017rebayes,
	author = {Koenker, Roger and Gu, Jiaying},
	journal = {Journal of Statistical Software},
	pages = {1--26},
	title = {REBayes: an R package for empirical Bayes mixture methods},
	volume = {82},
	year = {2017}}

@article{gu2016problem,
	author = {Gu, Jiaying and Koenker, Roger},
	journal = {International Statistical Review},
	number = {2},
	pages = {224--244},
	publisher = {Wiley Online Library},
	title = {On a problem of Robbins},
	volume = {84},
	year = {2016}}

@article{MR3465819,
	author = {Dicker, Lee H. and Zhao, Sihai D.},
	doi = {10.1093/biomet/asv067},
	fjournal = {Biometrika},
	issn = {0006-3444,1464-3510},
	journal = {Biometrika},
	mrclass = {62H30 (62F15 62G05)},
	mrnumber = {3465819},
	number = {1},
	pages = {21--34},
	title = {High-dimensional classification via nonparametric empirical {B}ayes and maximum likelihood inference},
	url = {https://doi.org/10.1093/biomet/asv067},
	volume = {103},
	year = {2016},
	bdsk-url-1 = {https://doi.org/10.1093/biomet/asv067}}

@incollection{MR2798524,
	author = {Jiang, Wenhua and Zhang, Cun-Hui},
	booktitle = {Borrowing strength: theory powering applications---a {F}estschrift for {L}awrence {D}. {B}rown},
	isbn = {978-0-940600-79-9},
	mrclass = {62C12},
	mrnumber = {2798524},
	pages = {263--273},
	publisher = {Inst. Math. Statist., Beachwood, OH},
	series = {Inst. Math. Stat. (IMS) Collect.},
	title = {Empirical {B}ayes in-season prediction of baseball batting averages},
	volume = {6},
	year = {2010}}

@inproceedings{robbins1950generalization,
	author = {Robbins, Herbert},
	booktitle = {Annals of mathematical statistics},
	number = {2},
	pages = {314--315},
	title = {A generalization of the method of maximum likelihood-estimating a mixing distribution},
	volume = {21},
	year = {1950}}

@article{MR0086464,
	author = {Kiefer, J. and Wolfowitz, J.},
	doi = {10.1214/aoms/1177728066},
	fjournal = {Annals of Mathematical Statistics},
	issn = {0003-4851},
	journal = {Ann. Math. Statist.},
	mrclass = {62.0X},
	mrnumber = {86464},
	mrreviewer = {A.\ Dvoretzky},
	pages = {887--906},
	title = {Consistency of the maximum likelihood estimator in the presence of infinitely many incidental parameters},
	url = {https://doi.org/10.1214/aoms/1177728066},
	volume = {27},
	year = {1956},
	bdsk-url-1 = {https://doi.org/10.1214/aoms/1177728066}}

@article{soloff2021multivariate,
	author = {Soloff, Jake A and Guntuboyina, Adityanand and Sen, Bodhisattva},
	journal = {arXiv preprint arXiv:2109.03466},
	title = {Multivariate, heteroscedastic empirical bayes via nonparametric maximum likelihood},
	year = {2021}}

@article{banerjee2020nonparametric,
	author = {Banerjee, Trambak and Fu, Luella J and James, Gareth M and Sun, Wenguang},
	journal = {arXiv preprint arXiv:2002.12586},
	title = {Nonparametric empirical Bayes estimation on heterogeneous data},
	year = {2020}}

@article{MR4109006,
	author = {Jiang, Wenhua},
	doi = {10.1214/20-EJS1717},
	fjournal = {Electronic Journal of Statistics},
	issn = {1935-7524},
	journal = {Electron. J. Stat.},
	mrclass = {62C12 (62G05 62G20)},
	mrnumber = {4109006},
	number = {1},
	pages = {2272--2297},
	title = {On general maximum likelihood empirical {B}ayes estimation of heteroscedastic {IID} normal means},
	url = {https://doi.org/10.1214/20-EJS1717},
	volume = {14},
	year = {2020},
	bdsk-url-1 = {https://doi.org/10.1214/20-EJS1717}}

@article{MR3832220,
	author = {Weinstein, Asaf and Ma, Zhuang and Brown, Lawrence D. and Zhang, Cun-Hui},
	doi = {10.1080/01621459.2017.1280406},
	fjournal = {Journal of the American Statistical Association},
	issn = {0162-1459,1537-274X},
	journal = {J. Amer. Statist. Assoc.},
	mrclass = {62F15 (62C12 62F10 62H12 62J07 62J10)},
	mrnumber = {3832220},
	number = {522},
	pages = {698--710},
	title = {Group-linear empirical {B}ayes estimates for a heteroscedastic normal mean},
	url = {https://doi.org/10.1080/01621459.2017.1280406},
	volume = {113},
	year = {2018},
	bdsk-url-1 = {https://doi.org/10.1080/01621459.2017.1280406}}

@article{MR3559950,
	author = {Tan, Zhiqiang},
	fjournal = {Statistica Sinica},
	issn = {1017-0405,1996-8507},
	journal = {Statist. Sinica},
	mrclass = {62C12 (62E20 62F15 62J07)},
	mrnumber = {3559950},
	number = {3},
	pages = {1219--1248},
	title = {Steinized empirical {B}ayes estimation for heteroscedastic data},
	volume = {26},
	year = {2016}}

@article{MR3036408,
	author = {Xie, Xianchao and Kou, S. C. and Brown, Lawrence D.},
	doi = {10.1080/01621459.2012.728154},
	fjournal = {Journal of the American Statistical Association},
	issn = {0162-1459,1537-274X},
	journal = {J. Amer. Statist. Assoc.},
	mrclass = {62J07 (62G05 62G20 62J15)},
	mrnumber = {3036408},
	number = {500},
	pages = {1465--1479},
	title = {S{URE} estimates for a heteroscedastic hierarchical model},
	url = {https://doi.org/10.1080/01621459.2012.728154},
	volume = {107},
	year = {2012},
	bdsk-url-1 = {https://doi.org/10.1080/01621459.2012.728154}}

@article{MR2847987,
	author = {Jiang, Jiming and Nguyen, Thuan and Rao, J. Sunil},
	doi = {10.1198/jasa.2011.tm10221},
	fjournal = {Journal of the American Statistical Association},
	issn = {0162-1459,1537-274X},
	journal = {J. Amer. Statist. Assoc.},
	mrclass = {62D05 (62F10 62F12 62J05 62P10)},
	mrnumber = {2847987},
	number = {494},
	pages = {732--745},
	title = {Best predictive small area estimation},
	url = {https://doi.org/10.1198/jasa.2011.tm10221},
	volume = {106},
	year = {2011},
	bdsk-url-1 = {https://doi.org/10.1198/jasa.2011.tm10221}}

@misc{johnstone2019gaussian,
	author = {Johnstone , Iain},
	title = {Gaussian estimation: sequence and wavelet models},
	url = {https://imjohnstone.su.domains/GE_09_16_19.pdf},
	year = {2019},
	bdsk-url-1 = {https://imjohnstone.su.domains/GE_09_16_19.pdf}}

@article{MR3264543,
	author = {Efron, Bradley},
	doi = {10.1214/13-STS455},
	fjournal = {Statistical Science. A Review Journal of the Institute of Mathematical Statistics},
	issn = {0883-4237,2168-8745},
	journal = {Statist. Sci.},
	mrclass = {62C12},
	mrnumber = {3264543},
	mrreviewer = {Panayiotis\ Bobotas},
	number = {2},
	pages = {285--301},
	title = {Two modeling strategies for empirical {B}ayes estimation},
	url = {https://doi.org/10.1214/13-STS455},
	volume = {29},
	year = {2014},
	bdsk-url-1 = {https://doi.org/10.1214/13-STS455}}

@book{MR2724758,
	author = {Efron, Bradley},
	doi = {10.1017/CBO9780511761362},
	isbn = {978-0-521-19249-1},
	mrclass = {62-02 (62-07 62C12 62H15 62J15)},
	mrnumber = {2724758},
	mrreviewer = {I.\ N.\ Volodin},
	note = {Empirical Bayes methods for estimation, testing, and prediction},
	pages = {xii+263},
	publisher = {Cambridge University Press, Cambridge},
	series = {Institute of Mathematical Statistics (IMS) Monographs},
	title = {Large-scale inference},
	url = {https://doi.org/10.1017/CBO9780511761362},
	volume = {1},
	year = {2010},
	bdsk-url-1 = {https://doi.org/10.1017/CBO9780511761362}}

@article{MR0696849,
	author = {Morris, Carl N.},
	fjournal = {Journal of the American Statistical Association},
	issn = {0162-1459,1537-274X},
	journal = {J. Amer. Statist. Assoc.},
	mrclass = {62C12 (62F15)},
	mrnumber = {696849},
	note = {With discussion},
	number = {381},
	pages = {47--65},
	title = {Parametric empirical {B}ayes inference: theory and applications},
	url = {http://links.jstor.org/sici?sici=0162-1459(198303)78:381<47:PEBITA>2.0.CO;2-S&origin=MSN},
	volume = {78},
	year = {1983},
	bdsk-url-1 = {http://links.jstor.org/sici?sici=0162-1459(198303)78:381%3C47:PEBITA%3E2.0.CO;2-S&origin=MSN}}

@article{efron1973stein,
	   AUTHOR = {Efron, Bradley and Morris, Carl},
     TITLE = {Stein's estimation rule and its competitors---an empirical
              {B}ayes approach},
   JOURNAL = {J. Amer. Statist. Assoc.},
  FJOURNAL = {Journal of the American Statistical Association},
    VOLUME = {68},
      YEAR = {1973},
     PAGES = {117--130},
      ISSN = {0162-1459,1537-274X},
   MRCLASS = {62C10},
  MRNUMBER = {388597},
MRREVIEWER = {W.\ E.\ Strawderman},
       URL =
              {http://links.jstor.org/sici?sici=0162-1459(197303)68:341<117:SERAIC>2.0.CO;2-T&origin=MSN},
	}

@article{efron1973combining,
	 AUTHOR = {Efron, B. and Morris, C.},
     TITLE = {Combining possibly related estimation problems},
   JOURNAL = {J. Roy. Statist. Soc. Ser. B},
  FJOURNAL = {Journal of the Royal Statistical Society. Series B.
              Methodological},
    VOLUME = {35},
      YEAR = {1973},
     PAGES = {379--421},
      ISSN = {0035-9246},
   MRCLASS = {62F15},
  MRNUMBER = {381112},
MRREVIEWER = {I.\ J.\ Good},
       URL =
              {http://links.jstor.org/sici?sici=0035-9246(1973)35:3<379:CPREP>2.0.CO;2-7&origin=MSN},
              	}

@article{efron1972limiting,
AUTHOR = {Efron, Bradley and Morris, Carl},
     TITLE = {Limiting the risk of {B}ayes and empirical {B}ayes estimators.
              {II}. {T}he empirical {B}ayes case},
   JOURNAL = {J. Amer. Statist. Assoc.},
  FJOURNAL = {Journal of the American Statistical Association},
    VOLUME = {67},
      YEAR = {1972},
     PAGES = {130--139},
      ISSN = {0162-1459,1537-274X},
   MRCLASS = {62F15},
  MRNUMBER = {323015},
MRREVIEWER = {J.\ W.\ Pratt},
       URL =
              {http://links.jstor.org/sici?sici=0162-1459(197203)67:337<130:LTROBA>2.0.CO;2-1&origin=MSN},	
	}

@article{efron1972empirical,
	 AUTHOR = {Efron, Bradley and Morris, Carl},
     TITLE = {Empirical {B}ayes on vector observations: an extension of
              {S}tein's method},
   JOURNAL = {Biometrika},
  FJOURNAL = {Biometrika},
    VOLUME = {59},
      YEAR = {1972},
     PAGES = {335--347},
      ISSN = {0006-3444,1464-3510},
   MRCLASS = {62F15},
  MRNUMBER = {334386},
MRREVIEWER = {Mir\ Ali},
       DOI = {10.1093/biomet/59.2.335},
       URL = {https://doi.org/10.1093/biomet/59.2.335},	
	}

@incollection{MR0133191,
	author = {James, W. and Stein, Charles},
	booktitle = {Proc. 4th {B}erkeley {S}ympos. {M}ath. {S}tatist. and {P}rob., {V}ol. {I}},
	mrclass = {62.30},
	mrnumber = {133191},
	mrreviewer = {J.\ Kiefer},
	pages = {361--379},
	publisher = {Univ. California Press, Berkeley-Los Angeles, Calif.},
	title = {Estimation with quadratic loss},
	year = {1960}}

@article{yan2020nonlinear,
	author = {Yan, Jun},
	doi = {10.1214/19-AAP1516},
	fjournal = {The Annals of Applied Probability},
	issn = {1050-5164,2168-8737},
	journal = {Ann. Appl. Probab.},
	mrclass = {60F10 (05C80 60C05 60K35)},
	mrnumber = {4108123},
	number = {2},
	pages = {812--846},
	title = {Nonlinear large deviations: beyond the hypercube},
	url = {https://doi.org/10.1214/19-AAP1516},
	volume = {30},
	year = {2020},
	bdsk-url-1 = {https://doi.org/10.1214/19-AAP1516}}

@article{mukherjee2022variational,
	author = {Mukherjee, Sumit and Sen, Subhabrata},
	fjournal = {Journal of Machine Learning Research (JMLR)},
	issn = {1532-4435,1533-7928},
	journal = {J. Mach. Learn. Res.},
	mrclass = {62J05 (60F10 62F15)},
	mrnumber = {4577743},
	pages = {Paper No. [304], 56},
	title = {Variational inference in high-dimensional linear regression},
	volume = {23},
	year = {2022}}

@article{kiefer1956consistency,
	author = {Kiefer, J. and Wolfowitz, J.},
	doi = {10.1214/aoms/1177728066},
	fjournal = {Annals of Mathematical Statistics},
	issn = {0003-4851},
	journal = {Ann. Math. Statist.},
	mrclass = {62.0X},
	mrnumber = {86464},
	mrreviewer = {A. Dvoretzky},
	pages = {887--906},
	title = {Consistency of the maximum likelihood estimator in the presence of infinitely many incidental parameters},
	url = {https://doi-org.ezproxy.cul.columbia.edu/10.1214/aoms/1177728066},
	volume = {27},
	year = {1956},
	bdsk-url-1 = {https://doi-org.ezproxy.cul.columbia.edu/10.1214/aoms/1177728066},
	bdsk-url-2 = {https://doi.org/10.1214/aoms/1177728066}}

@inproceedings{Robbins-56,
	author = {Robbins, Herbert},
	booktitle = {Proceedings of the {T}hird {B}erkeley {S}ymposium on {M}athematical {S}tatistics and {P}robability, 1954--1955, vol. {I}},
	mrclass = {62.0X},
	mrnumber = {0084919},
	mrreviewer = {R. Pinkham},
	pages = {157--163},
	publisher = {University of California Press, Berkeley-Los Angeles, Calif.},
	title = {An empirical {B}ayes approach to statistics},
	year = {1956}}

@article{George-Foster-2000,
	author = {George, Edward I. and Foster, Dean P.},
	doi = {10.1093/biomet/87.4.731},
	fjournal = {Biometrika},
	issn = {0006-3444},
	journal = {Biometrika},
	mrclass = {62C12 (62F15 62G15 62J07)},
	mrnumber = {1813972},
	number = {4},
	pages = {731--747},
	title = {Calibration and empirical {B}ayes variable selection},
	url = {https://doi-org.ezproxy.cul.columbia.edu/10.1093/biomet/87.4.731},
	volume = {87},
	year = {2000},
	bdsk-url-1 = {https://doi-org.ezproxy.cul.columbia.edu/10.1093/biomet/87.4.731},
	bdsk-url-2 = {https://doi.org/10.1093/biomet/87.4.731}}

@article{george1993variable,
	author = {George, Edward I and McCulloch, Robert E},
	date-modified = {2023-09-28 16:18:32 -0400},
	journal = {J. Amer. Statist. Assoc.},
	number = {423},
	pages = {881--889},
	publisher = {Taylor \& Francis},
	title = {Variable selection via Gibbs sampling},
	volume = {88},
	year = {1993}}

@article{Blei-Et-Al-VI,
	author = {Blei, David M. and Kucukelbir, Alp and McAuliffe, Jon D.},
	doi = {10.1080/01621459.2017.1285773},
	fjournal = {Journal of the American Statistical Association},
	issn = {0162-1459},
	journal = {J. Amer. Statist. Assoc.},
	mrclass = {62E17 (62F15)},
	mrnumber = {3671776},
	number = {518},
	pages = {859--877},
	title = {Variational inference: a review for statisticians},
	url = {https://doi-org.ezproxy.cul.columbia.edu/10.1080/01621459.2017.1285773},
	volume = {112},
	year = {2017},
	bdsk-url-1 = {https://doi-org.ezproxy.cul.columbia.edu/10.1080/01621459.2017.1285773},
	bdsk-url-2 = {https://doi.org/10.1080/01621459.2017.1285773}}

@article{Guan-Stephens-2011,
	author = {Guan, Yongtao and Stephens, Matthew},
	doi = {10.1214/11-AOAS455},
	fjournal = {The Annals of Applied Statistics},
	issn = {1932-6157},
	journal = {Ann. Appl. Stat.},
	mrclass = {62F15 (62J05 62P10)},
	mrnumber = {2884922},
	number = {3},
	pages = {1780--1815},
	title = {Bayesian variable selection regression for {G}enome-wide association studies and other large-scale problems},
	url = {https://doi-org.ezproxy.cul.columbia.edu/10.1214/11-AOAS455},
	volume = {5},
	year = {2011},
	bdsk-url-1 = {https://doi-org.ezproxy.cul.columbia.edu/10.1214/11-AOAS455},
	bdsk-url-2 = {https://doi.org/10.1214/11-AOAS455}}

@book{billingsley-convergence-1999,
	author = {Billingsley, Patrick},
	doi = {10.1002/9780470316962},
	edition = {Second},
	isbn = {0-471-19745-9},
	mrclass = {60B10 (28A33 60F17)},
	mrnumber = {1700749},
	note = {A Wiley-Interscience Publication},
	pages = {x+277},
	publisher = {John Wiley \& Sons, Inc., New York},
	series = {Wiley Series in Probability and Statistics: Probability and Statistics},
	title = {Convergence of probability measures},
	url = {https://doi-org.ezproxy.cul.columbia.edu/10.1002/9780470316962},
	year = {1999},
	bdsk-url-1 = {https://doi-org.ezproxy.cul.columbia.edu/10.1002/9780470316962},
	bdsk-url-2 = {https://doi.org/10.1002/9780470316962}}

@article{wang2020simple,
	author = {Wang, Gao and Sarkar, Abhishek and Carbonetto, Peter and Stephens, Matthew},
	fjournal = {Journal of the Royal Statistical Society. Series B. Statistical Methodology},
	issn = {1369-7412,1467-9868},
	journal = {J. R. Stat. Soc. Ser. B. Stat. Methodol.},
	mrclass = {62J05 (62F15)},
	mrnumber = {4176343},
	mrreviewer = {Pierre\ Alquier},
	number = {5},
	pages = {1273--1300},
	title = {A simple new approach to variable selection in regression, with application to genetic fine mapping},
	volume = {82},
	year = {2020}}

@article{kim2022flexible,
	author = {Kim, Youngseok and Wang, Wei and Carbonetto, Peter and Stephens, Matthew},
	journal = {arXiv preprint arXiv:2208.10910},
	title = {A flexible empirical Bayes approach to multiple linear regression and connections with penalized regression},
	year = {2022}}

@article{carbonetto2012scalable,
	author = {Carbonetto, Peter and Stephens, Matthew},
	journal = {Bayesian analysis},
	number = {1},
	pages = {73--108},
	publisher = {International Society for Bayesian Analysis},
	title = {Scalable variational inference for Bayesian variable selection in regression, and its accuracy in genetic association studies},
	volume = {7},
	year = {2012}}

@article{george1997approaches,
	author = {George, Edward I and McCulloch, Robert E},
	journal = {Statist.~Sinica},
	pages = {339--373},
	publisher = {JSTOR},
	title = {Approaches for {B}ayesian variable selection},
	year = {1997}}

@article{Jiang-Zhang-2009,
	author = {Jiang, Wenhua and Zhang, Cun-Hui},
	doi = {10.1214/08-AOS638},
	fjournal = {The Annals of Statistics},
	issn = {0090-5364,2168-8966},
	journal = {Ann. Statist.},
	mrclass = {62C12 (62C25 62G05)},
	mrnumber = {2533467},
	mrreviewer = {Tonglin\ Zhang},
	number = {4},
	pages = {1647--1684},
	title = {General maximum likelihood empirical {B}ayes estimation of normal means},
	url = {https://doi.org/10.1214/08-AOS638},
	volume = {37},
	year = {2009},
	bdsk-url-1 = {https://doi.org/10.1214/08-AOS638}}

@book{efron2012large,
	author = {Efron, Bradley},
	publisher = {Cambridge University Press},
	title = {Large-scale inference: empirical {B}ayes methods for estimation, testing, and prediction},
	volume = {1},
	year = {2012}}

@article{Nebebe-Stroud-1986,
	author = {Nebebe, Fassil and Stroud, T. W. F.},
	doi = {10.2307/3315184},
	fjournal = {The Canadian Journal of Statistics. La Revue Canadienne de Statistique},
	issn = {0319-5724},
	journal = {Canad. J. Statist.},
	mrclass = {62C12 (62J07)},
	mrnumber = {876752},
	number = {4},
	pages = {267--280},
	title = {Bayes and empirical {B}ayes shrinkage estimation of regression coefficients},
	url = {https://doi-org.ezproxy.cul.columbia.edu/10.2307/3315184},
	volume = {14},
	year = {1986},
	bdsk-url-1 = {https://doi-org.ezproxy.cul.columbia.edu/10.2307/3315184},
	bdsk-url-2 = {https://doi.org/10.2307/3315184}}

@article{Zhang-Gao-2020,
	author = {Zhang, Fengshuo and Gao, Chao},
	doi = {10.1214/19-AOS1883},
	fjournal = {The Annals of Statistics},
	issn = {0090-5364,2168-8966},
	journal = {Ann. Statist.},
	mrclass = {62C10 (62F15)},
	mrnumber = {4134791},
	mrreviewer = {\'{E}ric\ Marchand},
	number = {4},
	pages = {2180--2207},
	title = {Convergence rates of variational posterior distributions},
	url = {https://doi.org/10.1214/19-AOS1883},
	volume = {48},
	year = {2020},
	bdsk-url-1 = {https://doi.org/10.1214/19-AOS1883}}

@article{koenker2014convex,
	author = {Koenker, Roger and Mizera, Ivan},
	doi = {10.1080/01621459.2013.869224},
	fjournal = {Journal of the American Statistical Association},
	issn = {0162-1459,1537-274X},
	journal = {J. Amer. Statist. Assoc.},
	mrclass = {62C12 (62C25 62G05 62G07)},
	mrnumber = {3223742},
	mrreviewer = {Jingjing\ Wu},
	number = {506},
	pages = {674--685},
	title = {Convex optimization, shape constraints, compound decisions, and empirical {B}ayes rules},
	url = {https://doi.org/10.1080/01621459.2013.869224},
	volume = {109},
	year = {2014},
	bdsk-url-1 = {https://doi.org/10.1080/01621459.2013.869224}}

@article{Yuan-Lin-2005,
	author = {Yuan, Ming and Lin, Yi},
	doi = {10.1198/016214505000000367},
	fjournal = {Journal of the American Statistical Association},
	issn = {0162-1459},
	journal = {J. Amer. Statist. Assoc.},
	mrclass = {62C12 (62F10 62J05)},
	mrnumber = {2236436},
	number = {472},
	pages = {1215--1225},
	title = {Efficient empirical {B}ayes variable selection and estimation in linear models},
	url = {https://doi-org.ezproxy.cul.columbia.edu/10.1198/016214505000000367},
	volume = {100},
	year = {2005},
	bdsk-url-1 = {https://doi-org.ezproxy.cul.columbia.edu/10.1198/016214505000000367},
	bdsk-url-2 = {https://doi.org/10.1198/016214505000000367}}

@article{Ray-Szabo-2022,
	author = {Ray, Kolyan and Szab\'{o}, Botond},
	doi = {10.1080/01621459.2020.1847121},
	fjournal = {Journal of the American Statistical Association},
	issn = {0162-1459},
	journal = {J. Amer. Statist. Assoc.},
	mrclass = {Expansion},
	mrnumber = {4480711},
	number = {539},
	pages = {1270--1281},
	title = {Variational {B}ayes for high-dimensional linear regression with sparse priors},
	url = {https://doi-org.ezproxy.cul.columbia.edu/10.1080/01621459.2020.1847121},
	volume = {117},
	year = {2022},
	bdsk-url-1 = {https://doi-org.ezproxy.cul.columbia.edu/10.1080/01621459.2020.1847121},
	bdsk-url-2 = {https://doi.org/10.1080/01621459.2020.1847121}}

@article{Hastings-1970,
	author = {Hastings, W. K.},
	doi = {10.1093/biomet/57.1.97},
	fjournal = {Biometrika},
	issn = {0006-3444},
	journal = {Biometrika},
	mrclass = {65C05 (11K45 60J22)},
	mrnumber = {3363437},
	number = {1},
	pages = {97--109},
	title = {Monte {C}arlo sampling methods using {M}arkov chains and their applications},
	url = {https://doi-org.ezproxy.cul.columbia.edu/10.1093/biomet/57.1.97},
	volume = {57},
	year = {1970},
	bdsk-url-1 = {https://doi-org.ezproxy.cul.columbia.edu/10.1093/biomet/57.1.97},
	bdsk-url-2 = {https://doi.org/10.1093/biomet/57.1.97}}

@article{metropolis1953equation,
	author = {Metropolis, Nicholas and Rosenbluth, Arianna W and Rosenbluth, Marshall N and Teller, Augusta H and Teller, Edward},
	journal = {The journal of chemical physics},
	number = {6},
	pages = {1087--1092},
	publisher = {American Institute of Physics},
	title = {Equation of state calculations by fast computing machines},
	volume = {21},
	year = {1953}}

@article{Ishwaran-Rao-2005,
	author = {Ishwaran, Hemant and Rao, J. Sunil},
	doi = {10.1214/009053604000001147},
	fjournal = {The Annals of Statistics},
	issn = {0090-5364},
	journal = {Ann. Statist.},
	mrclass = {62J05 (62C99 62J07)},
	mrnumber = {2163158},
	mrreviewer = {B. M. Golam Kibria},
	number = {2},
	pages = {730--773},
	title = {Spike and slab variable selection: frequentist and {B}ayesian strategies},
	url = {https://doi-org.ezproxy.cul.columbia.edu/10.1214/009053604000001147},
	volume = {33},
	year = {2005},
	bdsk-url-1 = {https://doi-org.ezproxy.cul.columbia.edu/10.1214/009053604000001147},
	bdsk-url-2 = {https://doi.org/10.1214/009053604000001147}}

@article{wainwright2008graphical,
	author = {Wainwright, Martin J and Jordan, Michael I},
	journal = {Foundations and Trends{\textregistered} in Machine Learning},
	number = {1--2},
	pages = {1--305},
	publisher = {Now Publishers, Inc.},
	title = {Graphical models, exponential families, and variational inference},
	volume = {1},
	year = {2008}}

@article{jordan1999introduction,
	author = {Jordan, Michael I and Ghahramani, Zoubin and Jaakkola, Tommi S and Saul, Lawrence K},
	journal = {Machine learning},
	pages = {183--233},
	publisher = {Springer},
	title = {An introduction to variational methods for graphical models},
	volume = {37},
	year = {1999}}

@article{augeri2020nonlinear,
  title={Nonlinear large deviation bounds with applications to Wigner matrices and sparse Erd{\H{o}}s--R{\'e}nyi graphs},
  author={Augeri, Fanny},
  journal={The Annals of probability},
  volume={48},
  number={5},
  pages={2404--2448},
  year={2020},
  publisher={JSTOR}
}

@article{ellis1976ghs,
  title={The GHS and other correlation inequalities for a class of even ferromagnets},
  author={Ellis, Richard S and Monroe, James L and Newman, Charles M},
  journal={Communications in Mathematical Physics},
  volume={46},
  number={2},
  pages={167--182},
  year={1976},
  publisher={Springer}
}

@book{johnson1994continuous,
  title={Continuous univariate distributions, volume 1},
  author={Johnson, Norman L and Kotz, Samuel and Balakrishnan, Narayanaswamy},
  volume={1},
  year={1994},
  publisher={John wiley \& sons}
}
\end{document}